\documentclass[11pt,  reqno]{amsart}

\usepackage{amsmath,amssymb,amscd,amsthm,amsxtra, esint}
\usepackage[implicit=true]{hyperref}
\usepackage[all]{xy}

\usepackage{color}
\usepackage{ulem}
\usepackage[makeroom]{cancel}

\headheight=8pt
\topmargin=0pt
\textheight=624pt
\textwidth=432pt
\oddsidemargin=18pt
\evensidemargin=18pt

\allowdisplaybreaks[2]

\sloppy

\hfuzz  = 0.5cm 


\setlength{\pdfpagewidth}{8.50in}
\setlength{\pdfpageheight}{11.00in}

\definecolor{gr}{rgb}   {0.,   0.69,   0.23 }
\definecolor{bl}{rgb}   {0.,   0.5,   1. }
\definecolor{mg}{rgb}   {0.85,  0.,    0.85}
\definecolor{yl}{rgb}   {0.8,  0.7,   0.}
\definecolor{or}{rgb}  {0.7,0.2,0.2}

\newtheorem{theorem}{Theorem} [section]

\newtheorem{lemma}[theorem]{Lemma}
\newtheorem{proposition}[theorem]{Proposition}
\newtheorem{remark}[theorem]{Remark}

\newtheorem{definition}[theorem]{Definition}
\newtheorem{corollary}[theorem]{Corollary}

\DeclareMathOperator*{\intt}{\int}

\newcommand{\I}{\hspace{0.5mm}\text{I}\hspace{0.5mm}}
\newcommand{\II}{\text{I \hspace{-2.8mm} I} }
\newcommand{\III}{\text{I \hspace{-2.9mm} I \hspace{-2.9mm} I}}

\newcommand{\noi}{\noindent}
\newcommand{\Z}{\mathbb{Z}}
\newcommand{\R}{\mathbb{R}}
\newcommand{\C}{\mathbb{C}}
\newcommand{\T}{\mathbb{T}}
\newcommand{\NB}{\mathbb{N}}

\newcommand{\RR}{\mathcal{R}}

\newcommand{\N}{\mathcal{N}}
\newcommand{\TT}{\mathcal{T}}
\newcommand{\F}{\mathcal{F}}
\newcommand{\FL}{\mathcal{F}L}

\newcommand{\al}{\alpha}
\newcommand{\be}{\beta}
\newcommand{\dl}{\delta}

\newcommand{\eps}{\varepsilon}

\newcommand{\g}{\gamma}
\newcommand{\G}{\Gamma}

\newcommand{\s}{\sigma}

\newcommand{\ft}{\widehat}

\newcommand{\wt}{\widetilde}
\newcommand{\cj}{\overline}
\newcommand{\dx}{\partial_x}
\newcommand{\dt}{\partial_t}

\newcommand{\embeds}{\hra}

\newcommand{\ta}{\theta}
\renewcommand{\l}{\ell}

\newcommand{\les}{\lesssim}

\newcommand{\jb}[1]
{\langle #1 \rangle}

\newcommand{\ind}{\mathbf 1}

\renewcommand{\S}{\mathcal{S}}

\newcommand{\M}{\mathcal{M}}

\newtheorem*{ackno}{Acknowledgement}

\numberwithin{equation}{section}
\numberwithin{theorem}{section}

\newcommand{\Ns}{\textsf{N}}
\newcommand{\Rs}{\textsf{R}}


\newcommand{\hra}{\hookrightarrow}

\newcommand{\uu}{{\bf u}}
\newcommand{\vv}{{\bf v}}
\newcommand{\n}{{\bf n}}

\begin{document}
\baselineskip = 15pt

\title[Normal form approach to NLS in Fourier-Lebesgue spaces]
{Normal form approach to the one-dimensional periodic cubic nonlinear Schr\"odinger equation in 
 almost critical Fourier-Lebesgue spaces}

\author[T.~Oh and  Y.~Wang]
{Tadahiro Oh and Yuzhao Wang}

\address{
Tadahiro Oh, School of Mathematics\\
The University of Edinburgh\\
and The Maxwell Institute for the Mathematical Sciences\\
James Clerk Maxwell Building\\
The King's Buildings\\
Peter Guthrie Tait Road\\
Edinburgh\\ 
EH9 3FD\\
 United Kingdom}

\email{hiro.oh@ed.ac.uk}

\address{
Yuzhao Wang\\
School of Mathematics\\
Watson Building\\
University of Birmingham\\
Edgbaston\\
Birmingham\\
B15 2TT\\ United Kingdom}

\email{y.wang.14@bham.ac.uk}

\subjclass[2010]{35Q55}

\keywords{nonlinear Schr\"odinger equation; normal form reduction;  unconditional uniqueness; Fourier Lebesgue space}

\begin{abstract}
In this paper, we study the one-dimensional cubic nonlinear Schr\"odinger equation (NLS)
on the circle. 
In particular, we develop a normal form approach to study NLS in almost critical Fourier-Lebesgue 
spaces.
By applying an infinite iteration of normal form reductions
introduced by the first author with Z.\,Guo and S.\,Kwon (2013), 
we derive a normal form equation
which is equivalent to the renormalized  cubic NLS for regular solutions.
For rough functions, the normal form equation behaves better
than the renormalized cubic NLS, thus providing a further renormalization
of the cubic NLS.
We then prove
that this normal form equation is unconditionally globally
well-posed in the Fourier-Lebesgue spaces $\F L^p(\T)$, $1 \leq p < \infty$.
By inverting the transformation, 
we  conclude global well-posedness of the renormalized cubic NLS
in almost critical Fourier-Lebesgue spaces in a suitable sense.
This approach also allows us to prove unconditional 
uniqueness of the (renormalized) cubic NLS in 
$\F L^p(\T)$ for $1\leq p \leq  \frac32$.

\end{abstract}

%
\maketitle
%

\baselineskip = 14pt

\section{Introduction}

\subsection{Nonlinear Schr\"odinger equation}

We consider 
the following cubic nonlinear Schr\"odinger equation (NLS)  on the circle $\T = \R/\Z$:
\begin{align}
\begin{cases}
i \dt u + \dx^2  u   \pm  | u |^{2} u = 0 \\
u |_{t = 0} = u_0,
\end{cases}
\quad (x, t) \in \T \times \R.
\label{NLS1}
\end{align}

\noi
The equation \eqref{NLS1} arises from various physical settings 
such as nonlinear optics and quantum physics.
See \cite{SS} for the references therein.
It is also known to be one of the simplest completely integrable PDEs \cite{ZS, AKNS,AM, GK, KVZ}.

The Cauchy problem \eqref{NLS1} has been studied extensively 
both on the real line and on the circle.
See \cite{OS, GO}  for the references therein.
In this paper, we study the periodic cubic NLS \eqref{NLS1}
in the Fourier-Lebesgue spaces
 $\FL^p(\T)$ defined via the norm:
\[
\| f \|_{\FL^p (\T)} : = \bigg( \sum_{n \in \Z} |\ft f (n)|^p \bigg)^{\frac1p}
\] 

\noi
with a usual modification when $p = \infty$.
For any $2 \le p \leq q \leq \infty$,
we have the following 
continuous embeddings:
\begin{align*}
\F L^1(\T) \hookrightarrow 
\F L^{q'}(\T)  \hookrightarrow 
\F L^{p'}(\T)
&  \hookrightarrow 
 \F L^2(\T) \\
&  = L^2(\T) \hookrightarrow 
\F L^p(\T)  \hookrightarrow 
\F L^q(\T)  \hookrightarrow 
\F L^\infty(\T).
\end{align*}

\noi
The space $\F L^1(\T)$ is the Wiener algebra.
The space  $\F L^\infty(\T)$ is the space of pseudo-measures, 
which contains all finite Borel measures on $\T$
but also more singular  distributions.  See \cite{Katz}.
Our main interest is to study \eqref{NLS1}
in $\FL^p(\T)$ for $p \gg 1$.

On the one hand, 
the cubic NLS \eqref{NLS1} is known to be 
globally well-posed in $\FL^2(\T) = L^2(\T) $ \cite{BO1}.
On the other hand, 
combining the known results \cite{GH, GO, OW}, 
we can easily show that 
it is ill-posed
in the Fourier-Lebesgue space
$\FL^p(\T)$ for $p > 2$ in a very strong sense.
See Proposition~\ref{PROP:NE} below.
This necessitates us to renormalize the nonlinearity
and consider the following renormalized cubic NLS:
\begin{align}
\begin{cases}
i \dt u + \dx^2  u   \pm  \big( | u |^{2} - 2 \int_\T \ |{u}|^2 dx\big)  u = 0 \\
u |_{t = 0} = u_0 .
\end{cases}
\label{WNLS1}
\end{align}

\noi
Note that the renormalized cubic NLS \eqref{WNLS1}
is ``equivalent'' to the original cubic NLS \eqref{NLS1}
for smooth solutions in the following sense.
For  $u \in C(\R; L^2(\T))$, 
we define the following invertible gauge transformation $\mathcal{G}$ by 
\begin{equation*}
\mathcal{G}(u)(t) : = e^{ \mp 2 i t \int_\T |u(t)|^2 dx} u(t)
\end{equation*}

\noi
with its inverse 
\begin{equation}
\mathcal{G}^{-1}(u)(t) : = e^{\pm 2 i t \int_\T |u(t)|^2 dx } u(t).
\label{gauge2}
\end{equation}

\noi
Then, thanks to the $L^2$-conservation, it is easy to see that 
 $u \in C(\R; L^2(\T))$ is a solution to \eqref{NLS1}
 if and only if  $\mathcal G(u)$
is a solution to the renormalized cubic NLS \eqref{WNLS1}.
This renormalization removes a certain singular component
from the nonlinearity and, as a result, 
the renormalized cubic NLS \eqref{WNLS1}  
 behaves better than  the cubic NLS~\eqref{NLS1} outside $L^2(\T)$.
The study of \eqref{WNLS1} outside $L^2(\T)$ has attracted much attention in recent years \cite{CH1, CH2, GH,OS,CO,   GO, OW, O, OW2}.

In \cite{GH}, Gr\"unrock-Herr adapted the Fourier restriction norm method
to the Fourier-Lebesgue space setting  and proved local well-posedness
of the renormalized cubic NLS~\eqref{WNLS1} in $\FL^p(\T)$ for $1\leq p < \infty$
by a standard contraction argument.
See also the work by Christ~\cite{CH2}.
In \cite{OW2},  by using the completely integrable structure of the equation, 
we established the following global-in-time a priori bound:
\begin{align}
\sup_{t \in \R} \| u(t) \|_{\FL^p} \leq C(\| u_0\|_{\FL^p})
\label{bound1}
\end{align}

\noi
for any smooth solution $u$ to the renormalized cubic NLS \eqref{WNLS1}
and $2 \leq p < \infty$, 
which implied  global well-posedness
 of \eqref{WNLS1} in $\F L^p(\T)$ for $1\leq p < \infty$.\footnote{For $1\leq p < 2$, 
 one needs to use the $L^2$-conservation and a persistence-of-regularity argument.
 See Appendix \ref{SEC:A}.}

As a corollary to the local well-posedness
of the renormalized cubic NLS in \cite{GH}, 
one easily  obtains the following non-existence result
for the original cubic NLS \eqref{NLS1} outside $L^2(\T)$.

\begin{proposition}
\label{PROP:NE}

Let $2 < p < \infty$
and $u_0 \in \FL^p(\T)\setminus L^2(\T)$.
Then, for any $T>0$, 
there exists no distributional  solution $u \in C([-T, T]; \FL^p(\T))$ to 
the cubic  NLS \eqref{NLS1}
such that

\begin{itemize}
\item[\textup{(i)}] $u|_{t = 0} = u_0$, 

\smallskip

\item[\textup{(ii)}] There exist smooth global solutions $\{u_n\}_{n\in \NB}$ 
to \eqref{NLS1} such that 
$u_n \to u$ in $ C([-T, T]; \mathcal{D}'(\T))$ as $n \to \infty$. 
\end{itemize}

\end{proposition}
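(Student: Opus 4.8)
The plan is to argue by contradiction, passing through the gauge transformation $\mathcal{G}$ to the renormalized equation \eqref{WNLS1}, for which a well-posedness theory in $\FL^p(\T)$ is available, and then exploiting the fact that the gauge factor in \eqref{gauge2} oscillates uncontrollably as the datum approaches $\FL^p(\T)\setminus L^2(\T)$. So suppose that, for some $T>0$, there were a distributional solution $u\in C([-T,T];\FL^p(\T))$ of \eqref{NLS1} with $u|_{t=0}=u_0\in\FL^p(\T)\setminus L^2(\T)$, together with smooth global solutions $\{u_n\}_{n\in\NB}$ of \eqref{NLS1} such that $u_n\to u$ in $C([-T,T];\mathcal{D}'(\T))$. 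Pairing with the Fourier modes, this gives $\ft{u_n}(t)(k)\to\ft{u}(t)(k)$ uniformly in $t\in[-T,T]$, for each $k\in\Z$; in particular $\ft{u_n}(0)(k)\to\ft{u_0}(k)$. Since $u_0\notin L^2(\T)$, Fatou's lemma forces $\liminf_{n\to\infty}\|u_n(0)\|_{L^2}^2\geq\sum_{k\in\Z}|\ft{u_0}(k)|^2=+\infty$, so by the $L^2$-mass conservation of \eqref{NLS1} the number $m_n:=\int_\T|u_n(t)|^2\,dx=\|u_n(0)\|_{L^2}^2$ is finite, independent of $t\in[-T,T]$, and satisfies $m_n\to\infty$.

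Next I would set $v_n:=\mathcal{G}(u_n)$, so that each $v_n$ is a smooth global solution of \eqref{WNLS1} with $v_n|_{t=0}=u_n(0)$; since $\mathcal{G}$ multiplies by a unimodular factor depending only on $t$, \eqref{gauge2} gives $u_n(t)=\mathcal{G}^{-1}(v_n)(t)=e^{\pm 2itm_n}v_n(t)$, hence
\[
\ft{u_n}(t)(k)=e^{\pm 2itm_n}\,\ft{v_n}(t)(k)\qquad\text{for all }k\in\Z,\ t\in[-T,T],\ n\in\NB.
\]
The crux is then to control $\{v_n\}$: using the local well-posedness of \eqref{WNLS1} in $\FL^p(\T)$ of Gr\"unrock--Herr \cite{GH} together with the global a priori bound \eqref{bound1} of \cite{OW2}, one argues (this being the delicate point, see below) that $v_n\to v$ in $C([-T,T];\mathcal{D}'(\T))$, where $v\in C([-T,T];\FL^p(\T))$ is the global solution of \eqref{WNLS1} with $v|_{t=0}=u_0$ supplied by those same results; in particular $\ft{v_n}(\cdot)(k)\to\ft{v}(\cdot)(k)$ uniformly on $[-T,T]$, for each $k$. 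Moreover $v\not\equiv 0$, because $v(0)=u_0\neq 0$ (as $0\in L^2(\T)$), so $t\mapsto\ft{v}(t)(k_0)$ is continuous and not identically zero for some $k_0\in\Z$, and therefore $|\ft{v}(\cdot)(k_0)|\geq c$ on a nondegenerate subinterval $I\subset[-T,T]$, for some $c>0$.

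The contradiction is now soft. For $n$ large one has $|\ft{v_n}(\cdot)(k_0)|\geq c/2$ on $I$, so the displayed identity may be solved to give $e^{\pm 2itm_n}=\ft{u_n}(t)(k_0)/\ft{v_n}(t)(k_0)$ on $I$; since numerator and denominator converge uniformly on $I$ to $\ft{u}(\cdot)(k_0)$ and $\ft{v}(\cdot)(k_0)$, the latter bounded away from $0$, the right-hand side converges uniformly on $I$ to $h(t):=\ft{u}(t)(k_0)/\ft{v}(t)(k_0)$. Hence $e^{\pm 2itm_n}\to h$ uniformly on $I$, so also in $L^2(I)$; on the other hand $m_n\to\infty$, whence $e^{\pm 2itm_n}\rightharpoonup 0$ weakly in $L^2(I)$ by the Riemann--Lebesgue lemma. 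By uniqueness of weak limits, $h\equiv 0$ a.e.\ on $I$ --- absurd, since $h$ is continuous on $I$ with $|h|\equiv 1$. This contradiction proves the proposition; as $u_0\neq 0$, suitable $k_0$ and $I$ indeed exist (and $I$ may be taken in the interior of $[-T,T]$).

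I expect the main obstacle to be the control of $\{v_n\}$ asserted in the second paragraph. Hypothesis (ii) furnishes convergence of the initial data $u_n(0)=v_n(0)$ only in $\mathcal{D}'(\T)$, not in $\FL^p(\T)$, so the continuous-dependence statement of \cite{GH} cannot be applied off the shelf; one must first rule out that $\{u_n(0)\}$ escapes to infinity in $\FL^p(\T)$ --- which would decouple the low modes of $v_n$ from the nonlinear evolution --- and for this one uses the a priori bound \eqref{bound1} to keep $\{v_n\}$ in a bounded subset of $C([-T,T];\FL^p(\T))$, together with the uniqueness part of the local theory of \cite{GH} to upgrade $\mathcal{D}'$-convergence of the data to enough convergence of the $v_n$ to identify the limit. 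Granting this, the mass blow-up $m_n\to\infty$ is merely Fatou's lemma plus mass conservation, and the closing argument is an elementary Riemann--Lebesgue estimate.
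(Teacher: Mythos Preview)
Your overall strategy---pass to the renormalized equation via the gauge, use mass conservation and Fatou to get $m_n\to\infty$, and then derive a Riemann--Lebesgue contradiction from the fast phase $e^{\pm 2itm_n}$---is exactly the argument the paper has in mind; the paper does not give a proof but refers to \cite{GO, OW3}, whose scheme is precisely ``a priori bound on the renormalized flow $+$ exploitation of the oscillation in \eqref{gauge2}.'' Your endgame (solve for $e^{\pm 2itm_n}$ on an interval where $\ft v(\cdot)(k_0)$ is bounded away from zero, then play uniform convergence to $h$ against weak convergence to $0$) is correct and clean, and the observation $|\ft u_n(t)(k_0)|=|\ft v_n(t)(k_0)|$ does give $|h|\equiv 1$ in the limit.

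The genuine issue is the one you isolate, but your suggested fix is circular. The bound \eqref{bound1} reads $\sup_t\|v_n(t)\|_{\FL^p}\le C(\|v_n(0)\|_{\FL^p})=C(\|u_n(0)\|_{\FL^p})$; it therefore cannot itself be used to ``rule out that $\{u_n(0)\}$ escapes to infinity in $\FL^p(\T)$,'' since its input is precisely the quantity you are trying to bound. Convergence $u_n(0)\to u_0$ in $\mathcal D'(\T)$ gives no $\FL^p$-control whatsoever (take $u_0+n\,e^{in^2x}$), so neither \eqref{bound1} nor the continuous-dependence statement of \cite{GH} is available without further input. What your closing argument actually requires is not global $\FL^p$-boundedness of $v_n$ but, for a \emph{single} frequency $k_0$, enough temporal compactness of $\ft{v_n}(\cdot)(k_0)$ (equicontinuity on $I$, say) so that the product with $e^{\pm 2itm_n}$ goes to zero weakly; this is what the a priori bound from the local theory in \cite{GH} (a bound on the Duhamel/nonlinear contribution to each Fourier mode) supplies once the data sit in a bounded set of $\FL^p$. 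So you should either (a) justify that hypothesis~(ii) may be strengthened to convergence of the data in $\FL^p(\T)$---which is how the analogous results in \cite{GO, OW3} are set up and used---or (b) give a separate argument covering the regime $\|u_n(0)\|_{\FL^p}\to\infty$. As written, the paragraph proposing to use \eqref{bound1} to obtain the boundedness it presupposes does not close the gap.
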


In \cite{GO}, the first author (with Z.\,Guo) proved
an analogous non-existence result
for \eqref{NLS1} in negative Sobolev spaces.
The argument was based on an a priori bound
for smooth solutions to the renormalized cubic NLS \eqref{WNLS1}
in negative Sobolev spaces
and exploiting a fast oscillation in \eqref{gauge2}.
The proof of the local well-posedness in \cite{GH}
yields  an a priori bound
for smooth solutions to the renormalized cubic NLS \eqref{WNLS1}
in $\FL^p(\T)$.
Then, we can prove Proposition \ref{PROP:NE}
by proceeding as in  \cite{GO, OW3}.
We omit details.

In the following, we only consider the focusing case
(i.e. with the $+$ sign in \eqref{NLS1} and~\eqref{WNLS1})
for simplicity.
Our main results equally apply to the defocusing case.

\subsection{Main results}

In the following, we introduce two notions of weak solutions.
Let $\N(u)$ denote the renormalized nonlinearity in \eqref{WNLS1}:\footnote{Hereafter, 
we drop the factor of $2\pi$ when it plays no role.}
\begin{align}
\begin{split}
\N(u) 
: &  = \bigg( | u |^{2} - 2 \int_\T \ |{u}|^2 dx\bigg)  u \\
& = \sum_{n_2 \ne n_1, n_3} \ft{{u}}(n_1)\cj{\ft u(n_2)}\ft{{u}}(n_3) 
	e^{i(n_1 - n_2 + n_3)x} - 
	\sum_{n\in \Z} |\ft{{u}}(n)|^2\ft{{u}}(n) e^{inx}.
\end{split}
\label{non1}
\end{align}

\noi
We first recall the following notion
of {\it weak solutions in the extended sense}.

\begin{definition}
\label{DEF:sol2}
\rm
Let $1\leq p < \infty$ and $T>0$. 

\smallskip

\noi
(i) We define a sequence of Fourier cutoff operators to be a sequence of Fourier multiplier operators $\{T_N\}_{N\in \NB}$
on $\mathcal{D'}(\T)$ with multipliers $m_N:\Z \to \mathbb{C}$ such that 
\begin{itemize}
\item
 $m_N$ has a compact support on $\Z$ for each $N \in \NB$, 
 \item  $m_N$ is uniformly bounded, 

\item 
 $m_N$ converges pointwise to $1$, i.e. $\lim_{N\to \infty} m_N(n) = 1$ for any $n \in \Z$.
\end{itemize}

\smallskip

\noi
(ii) 
Let $u \in C([-T, T]; \FL^p(\T))$. 
We say that $\N(u)$ exists and is equal to a distribution 
$v \in \mathcal{D}'(\T\times (-T, T))$
if for every sequence $\{T_N\}_{N\in \NB}$ of (spatial) Fourier cutoff operators, we have
\begin{equation*}
\lim_{N\to \infty} \N(T_N u) = v
\end{equation*}

\noi
in the sense of distributions on $\T\times (-T, T)$.

\smallskip

\noi
(iii) (weak solutions in the extended sense)
We say that $u  \in C([-T, T]; \FL^p (\T))$  is a weak solution of 
the renormalized cubic NLS \eqref{WNLS1}
in the extended sense
if 
\begin{itemize}
\item $u|_{t=0} = u_0$, 

\item the nonlinearity $\N(u)$ exists in the sense of (ii) above, 

\item  $u$ satisfies \eqref{WNLS1}
in the distributional sense  on $\T\times (-T, T)$,
where the nonlinearity $\N(u) $ is interpreted as above. 

\end{itemize}

\end{definition}

In  \cite{CH1, CH2}, Christ introduced this notion
in studying the renormalized cubic NLS \eqref{WNLS1}
in the low regularity setting.
See also \cite{Gub} for a similar notion of weak solutions,
where the nonlinearity is defined as a distributional limit
of smoothed nonlinearities.

Next, we introduce the following notion of {\it sensible weak solutions}.
See also \cite{OW2, FO}.

\begin{definition}[sensible weak solutions]\label{DEF:sol} \rm
Let $1\leq p < \infty$ and $T>0$. 
Given $u_0 \in \FL^p(\T)$, 
we say that
 $u \in C([-T,T]; \FL^p(\T))$
is a sensible weak solution
to the renormalized cubic NLS \eqref{WNLS1}  on $[-T, T]$ if,
for any sequence $\{u_{0, m}\}_{m \in \NB}$ of smooth functions
tending to $u_0$ in $\FL^p(\T)$, 
the corresponding (classical) solutions $u_m$ with $u_m|_{t = 0} = u_{0, m}$
converge to $u$ 
in $C([-T,T]; \F L^p (\T))$.
Moreover, we impose that there exists  a distribution $v$
such that $\N (u_m)$ converges to  $v$ in the space-time distributional sense,
independent of the choice of the approximating sequence.

\end{definition}

Note that, by using the equation, 
the convergence of  $u_m$ to $u$ 
in $C([-T,T]; \F L^p (\T))$
implies that 
$\N (u_m)$ converges to some  $v$ in the space-time distributional sense;
see \eqref{diff3} below.
Hence, the last part of Definition \ref{DEF:sol}
is not quite necessary.
We, however, keep it for clarity.

We point out that these notions of  weak solutions 
in Definitions \ref{DEF:sol2} and \ref{DEF:sol}
are  rather weak.
The cubic nonlinearity $\N(u)$ for a  weak solution $u$
in the sense of  Definitions \ref{DEF:sol2} or~\ref{DEF:sol}
does not directly make sense as a distribution in general
and we need to interpret it as a (unique) limit of smoothed nonlinearities $\N(T_N u)$
or the nonlinearities $\N(u_m)$ of smooth approximating solutions $u_m$.
This in particular
implies that 
weak solutions 
in the sense of  Definitions \ref{DEF:sol2} or \ref{DEF:sol}
 do not have to satisfy 
the equation even in the distributional sense.

On the one hand, sensible weak solutions are unique by definition.
On the other hand, weak solutions in the extended sense
are not  unique in general.
In fact, Christ \cite{CH1}
proved non-uniqueness of weak solutions in the extended sense 
for the renormalized cubic NLS~\eqref{WNLS1}
in negative Sobolev spaces.

\medskip

Our main goal in this paper is
(i) to develop further the normal form approach to study 
the (renormalized) cubic NLS, 
introduced in \cite{GKO}, 
and provide the solution theory
for \eqref{WNLS1} 
in almost critical Fourier-Lebesgue spaces
(Theorem~\ref{THM:1})
in the sense of Definitions~\ref{DEF:sol2} and~\ref{DEF:sol}
{\it without} using  any auxiliary function spaces, 
 in particular, {\it without}
using the Fourier restriction norm method
as in \cite{BO1, GH}
and 
(ii) to prove 
unconditional uniqueness
of the (renormalized) cubic NLS
in 
 $\FL^p(\T)$ for $1 \leq p < \frac{3}{2}$
 (Theorem~\ref{THM:2}).
In proving these results, 
we  apply an 
 infinite iteration of normal form reductions
 and
transform
the (renormalized) cubic NLS
into the so-called {\it normal form equation}.
We then prove 
 unconditional well-posedness of 
 the normal form equation 
 in 
 $\FL^{p}(\T)$ for {\it any} $1 \leq p < \infty$;
see  Theorem~\ref{THM:3} below.
 
We now state our main results.

\begin{theorem}\label{THM:1}
Let $1 \leq p < \infty$.
Then, the renormalized cubic NLS \eqref{WNLS1}
on $\T$ is globally well-posed in $\FL^p(\T)$
\begin{itemize}
\item
in the sense of weak solutions in the extended sense and

\item in the sense of sensible weak solutions.

\end{itemize}

\noi
When $1 \leq p \leq 2$, 
the same global well-posedness result applies to the \textup{(}unrenormalized\textup{)} cubic NLS \eqref{NLS1}.

\end{theorem}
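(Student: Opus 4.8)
The plan is to deduce Theorem~\ref{THM:1} from three ingredients: (i) an infinite iteration of normal form reductions that transforms \eqref{WNLS1} into the \emph{normal form equation}; (ii) the unconditional global well-posedness of that normal form equation in $\FL^p(\T)$ for every $1\leq p<\infty$ (Theorem~\ref{THM:3}); and (iii) the global a priori bound \eqref{bound1} for smooth solutions of \eqref{WNLS1}, together with the elementary fact that the normal form equation is equivalent to \eqref{WNLS1} at the smooth level.

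First I would pass to the interaction representation $v(t):=e^{-it\dx^2}u(t)$, so that on the Fourier side $\dt\widehat{v}(n,t)$ becomes the trilinear sum in \eqref{non1} dressed with an oscillatory factor $e^{it\phi}$, where (with $n=n_1-n_2+n_3$) the modulation function is $\phi = n^2-n_1^2+n_2^2-n_3^2 = 2(n_1-n_2)(n_3-n_2)$. I would split the nonlinearity into its resonant part, supported on the diagonals $n_2=n_1$ or $n_2=n_3$ --- precisely the part that the renormalization in \eqref{WNLS1} is designed to tame, leaving only the harmless term $-|\widehat{v}(n)|^2\widehat{v}(n)$ --- and its non-resonant part, on which an integration by parts in $t$ produces a gain of $\phi^{-1}$, of size $\sim(|n_1-n_2|\,|n_3-n_2|)^{-1}$. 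Iterating this step infinitely many times following \cite{GKO}, I would arrive at the normal form equation, a closed equation of the schematic form $\widehat{v}(n,t)=\widehat{v_0}(n)+\sum_{j\geq 1}\mathcal{N}_j[v](n,t)$ (plus resonant and boundary remainder terms), whose multilinear building blocks are absolutely convergent in $\FL^p$.

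Granting Theorem~\ref{THM:3}: for $u_0\in\FL^p(\T)$ and any sequence $u_{0,m}\to u_0$ of smooth data in $\FL^p$, the corresponding smooth global solutions $u_m$ of \eqref{WNLS1} solve the normal form equation, and by \eqref{bound1} and the continuous dependence supplied by Theorem~\ref{THM:3} they converge, on every interval $[-T,T]$, in $C([-T,T];\FL^p)$ to a limit $u\in C(\R;\FL^p)$ which solves the normal form equation and is independent of the approximating sequence. This immediately yields the sensible-weak-solution statement of Definition~\ref{DEF:sol}: since each $u_m$ satisfies $i\dt u_m+\dx^2 u_m=-\N(u_m)$ and $u_m\to u$ in $C([-T,T];\FL^p)$, the left-hand side converges in $\mathcal{D}'(\T\times(-T,T))$, so $\N(u_m)$ converges to $v:=-(i\dt u+\dx^2 u)$ in the space-time distributional sense, and $v$ is independent of $\{u_{0,m}\}$ because $u$ is. For the extended-sense statement of Definition~\ref{DEF:sol2}, I would run the normal form reduction with $u$ replaced by $T_Nu$ for an arbitrary Fourier cutoff $\{T_N\}$: since $T_Nu\to u$ in $C([-T,T];\FL^p)$ and $\sup_N\sup_t\|T_Nu(t)\|_{\FL^p}<\infty$, the resonant and boundary terms converge, while $\sum_{j\geq J}\mathcal{N}_j[T_Nu]$ is uniformly small for $J$ large by the absolute convergence in Theorem~\ref{THM:3}; comparing against the exact normal form identity for $u$ gives $\N(T_Nu)\to v$ in $\mathcal{D}'(\T\times(-T,T))$, so $u$ is also a weak solution in the extended sense.

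Finally, for the unrenormalized cubic NLS \eqref{NLS1} with $1\leq p\leq 2$, I would use the embedding $\FL^p(\T)\hookrightarrow L^2(\T)$: the gauge transformation $\mathcal{G}^{-1}$ in \eqref{gauge2} is then well defined on $C([-T,T];\FL^p)$, and along the smooth approximating sequence the $L^2$-conservation makes $t\mapsto e^{\pm 2it\int_\T|u(t)|^2\,dx}$ a continuous unimodular factor; applying $\mathcal{G}^{-1}$ to the solutions constructed above produces the desired weak solutions of \eqref{NLS1} in both senses (with $|u|^2u$ interpreted as the corresponding distributional limit), the case $1\leq p<2$ also invoking the persistence-of-regularity argument of Appendix~\ref{SEC:A}. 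The main obstacle is the one resolved by Theorem~\ref{THM:3}, namely the absolute convergence of the infinite normal form expansion in $\FL^p$ for large $p$ --- i.e. verifying that the divisors $\phi=2(n_1-n_2)(n_3-n_2)$ generated at each generation, combined with number-theoretic counting estimates for the modulation, beat the loss incurred by the trilinear sums; granting this, the only remaining subtlety here is the extended-sense statement, since $\N(T_Nu)$ need not converge in any strong topology and must be handled purely through the normal form identity for $u$ and the uniform bounds.
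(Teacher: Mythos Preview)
Your argument for the renormalized equation follows the paper closely: smooth approximations, Proposition~\ref{PROP:bound} (equivalently Theorem~\ref{THM:3}) to obtain Cauchy-ness in $C_T\FL^p$ via the Lipschitz bound~\eqref{DN3}, and then reading off convergence of $\N(u_m)$ from the equation as in~\eqref{diff3}. For the extended-sense part the paper likewise just defers to \cite{GKO}, so your sketch is at the same level of detail; one caution, though: you cannot literally ``run the normal form reduction with $u$ replaced by $T_Nu$'', since $T_Nu$ does not solve \eqref{WNLS1} and the integration-by-parts step consumes the equation. The argument in \cite{GKO} instead performs the differentiation by parts on $\int_0^t\N(T_Nu)\,dt'$ and, each time a factor $\partial_t\widehat{T_Nu}(n_b)$ appears, substitutes $m_N(n_b)\,\partial_t\widehat{u}(n_b)$ using the normal form equation satisfied by $u$ itself; this produces the same tree expansion decorated with harmless bounded multipliers $m_N$, to which the estimates of Proposition~\ref{PROP:bound} apply uniformly in $N$. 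With that correction your outline is fine.

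For the unrenormalized NLS with $1\le p\le2$ you take a genuinely different route from the paper. The paper does \emph{not} use the gauge $\mathcal G^{-1}$; instead it rewrites $|u|^2u$ as in~\eqref{non2}, carries the extra resonant piece $\RR_2(\uu)=2i\|\uu\|_{L^2}^2\widehat\uu(n)$ through the entire normal form iteration (Proposition~\ref{PROP:bound2}, Lemma~\ref{LEM:R^J_2}), and obtains a separate normal form equation~\eqref{NFE4} for~\eqref{NLS1} whose unconditional well-posedness is Theorem~\ref{THM:4}. Your gauge approach is shorter and avoids this duplication: since $\FL^p\hookrightarrow L^2$ for $p\le2$, the sensible weak solution $v$ of \eqref{WNLS1} inherits $L^2$-conservation from its smooth approximants, so the phase $\theta(t)=\pm 2t\|v(t)\|_{L^2}^2$ is in fact linear in $t$, $e^{i\theta(t)}$ is smooth, and both Definitions~\ref{DEF:sol2} and~\ref{DEF:sol} transfer from $v$ to $u=\mathcal G^{-1}(v)$ by the decomposition $|T_Nu|^2T_Nu=e^{i\theta(t)}\big(\N(T_Nv)+2\|T_Nv\|_{L^2}^2T_Nv\big)$. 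The paper's route, on the other hand, is what actually yields the unconditional uniqueness of~\eqref{NLS1} in Theorem~\ref{THM:2}, since it puts~\eqref{NLS1} itself into normal form; the gauge shortcut does not by itself give that.
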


This theorem follows from the local well-posedness
by Gr\"unrock-Herr \cite{GH}, 
combined with the a priori bound \eqref{bound1}
from \cite{OW2}.
As pointed out above, however, 
our main goal is to present an argument 
 independent of  the Fourier restriction norm method.
We instead employ the normal form approach
developed in \cite{GKO}.
Our approach does not involve any auxiliary function spaces
and consequently  allows us to prove unconditional uniqueness 
of the (renormalized) cubic NLS 
in $\F L^\frac{3}{2}(\T)$ (Theorem \ref{THM:2}).
We point out that the local well-posedness
in \cite{GH} only yields 
 conditional uniqueness, namely in the class \eqref{class1} below.

In \cite{GKO}, 
the first author (with Z.\,Guo and S.\,Kwon)
proved an analogous result in $L^2(\T)$
by implementing an infinite iteration of normal form reductions,\footnote{In \cite{GKO}, 
we only proved well-posedness of the cubic NLS \eqref{NLS1}
in the sense of weak solutions in the extended sense.
A small modification of the argument yields
well-posedness in the sense of sensible weak solutions.
See Section \ref{SEC:2}.}
yielding unconditional uniqueness 
of the cubic NLS \eqref{NLS1} in $H^\frac{1}{6}(\T)$.
The proof of Theorem~\ref{THM:1}
is also based on the same normal form approach.
See the next subsection.
Note that when $p$ is very large, Theorem~\ref{THM:1}
is  significantly harder to prove than the $L^2$-result in \cite{GKO} due to a much weaker  $\FL^p$-topology.

Given $u_0 \in \FL^p(\T)$, let $u$ be the global solution to \eqref{WNLS1}
with $u|_{t = 0} = u_0$
constructed in Theorem \ref{THM:1}.
Then, by the uniqueness of sensible solutions mentioned above, 
$u$ must coincide with the global solution constructed
in \cite{BO1, GH, OW2}.
In particular, 
the solution $u$ belongs to the class
\begin{align}
C([-T,T]; \FL^p (\T)) \cap X^{0,b}_{p} ([-T, T])
\label{class1}
\end{align}

\noi
for some $b > \frac{1}{p'}$, 
where $X^{0,b}_p([-T, T])$ denotes the local-in-time version of 
the Fourier restriction space $X^{0,b}_{p}$ 
adapted to the Fourier-Lebesgue setting.
See \eqref{Xsb1} and \eqref{Xsb3} below.

As mentioned above, 
 Theorem \ref{THM:1}
 does not allow us to directly\footnote{That is, 
 unless we use the uniqueness property of sensible solutions
 and conclude that they belong to the class \eqref{class1}
 by comparing with the solutions constructed in \cite{BO1, GH, OW2}.}
  conclude that 
 weak solutions constructed in Theorem \ref{THM:1}
  are distributional solutions to \eqref{WNLS1}.
For $1\leq p \leq \frac 32$, however, 
Hausdorff-Young's inequality: $\FL^p(\T) \subset \FL^\frac{3}{2}(\T) \subset L^3(\T)$
allows us to make sense of the cubic nonlinearity
in a direct manner.
In this case, we have the following  
uniqueness statement.

\begin{theorem}\label{THM:2}

Let $1\leq p \leq \frac{3}{2}$.
Then, given any   $u_0 \in \FL^p(\T)$, 
the solution $u $ to \eqref{NLS1} or~\eqref{WNLS1} with $u|_{t = 0} =  u_0$ 
constructed in Theorem \ref{THM:1}
is unique in $C(\R; \FL^p(\T))$.

\end{theorem}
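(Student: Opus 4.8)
The plan is to prove Theorem~\ref{THM:2} as a \emph{conditional} (unconditional!) uniqueness statement: any solution $u \in C(\R;\FL^p(\T))$ of \eqref{WNLS1} (or \eqref{NLS1}) with data $u_0$, interpreted in the distributional sense, must coincide with the solution constructed in Theorem~\ref{THM:1}, which belongs to the smaller class \eqref{class1}. Since for $1 \leq p \leq \frac32$ Hausdorff--Young gives $\FL^p(\T) \hookrightarrow \FL^{3/2}(\T) \hookrightarrow L^3(\T)$, the cubic nonlinearity $\N(u)$ of such a $u$ genuinely makes sense as an element of $C(\R; L^1(\T))$, so ``distributional solution'' is unambiguous here and no extended/sensible interpretation is needed. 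The strategy is the normal form approach of \cite{GKO}: rather than running a Gronwall/energy estimate on the difference of two solutions in $\FL^p$ directly (which fails because the trilinear estimate for the difference requires the $X^{0,b}_p$-control that an \emph{a priori} merely-continuous solution need not have), one applies the \emph{same} infinite iteration of normal form reductions used to derive the normal form equation to an \emph{arbitrary} $\FL^p$-solution, showing that every such solution in fact satisfies the normal form equation, and then invokes the unconditional well-posedness of the normal form equation (Theorem~\ref{THM:3}).

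Concretely, the steps are: (1) Record that the normal form reduction procedure of Section~\ref{SEC:2}ff.\ (the infinite iteration producing the normal form equation) was set up so that each algebraic manipulation -- passing derivatives onto the phase, integration by parts in time, splitting into near-resonant and non-resonant pieces -- is valid for any $u \in C([-T,T];\FL^p(\T))$ whose nonlinearity is a genuine distribution, which is exactly our situation when $p \leq \frac32$; the multilinear estimates bounding the resulting infinite series of correction terms were proved to converge in $\FL^p$ using \emph{only} the $C_t\FL^p_x$-norm of $u$, not any auxiliary norm. (2) Conclude that both $u$ (the given a priori solution) and $\tilde u$ (the solution from Theorem~\ref{THM:1}) solve the normal form equation in $C([-T,T];\FL^p(\T))$ with the same data $u_0$. (3) Apply the uniqueness part of Theorem~\ref{THM:3} (unconditional well-posedness of the normal form equation in $\FL^p(\T)$ for all $1 \leq p < \infty$, in particular $p \leq \frac32$) to deduce $u = \tilde u$ on $[-T,T]$; since $T$ is arbitrary and the solutions are global, $u = \tilde u$ on all of $\R$. (4) For the unrenormalized equation \eqref{NLS1}, transfer the statement through the gauge transformation $\mathcal G$ of \eqref{gauge2}, which is a bijection on $C(\R;\FL^p(\T))$ for $p \leq 2$ (using that $\FL^p \hookrightarrow L^2$ there, so $\int_\T |u|^2\,dx$ is well-defined and conserved), reducing uniqueness for \eqref{NLS1} to uniqueness for \eqref{WNLS1}.

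The key point -- and the only thing that needs genuine verification beyond citing Theorem~\ref{THM:1}, Theorem~\ref{THM:3}, and the normal form machinery -- is step~(1): that the normal form reduction and all the attendant multilinear estimates are legitimate for a \emph{general} solution of regularity only $C_t\FL^p_x$, rather than for the smooth approximating solutions or for solutions already known to lie in \eqref{class1}. This is where the restriction $p \leq \frac32$ enters: it guarantees $\N(u)$ is an honest $L^1$-valued function so that the Duhamel formulation and the iterated time-integrations by parts are meaningful pointwise in $t$, and it is the threshold below which the relevant multilinear sums (after normal form reduction) close in $\FL^p$ without borrowing smoothing from an $X^{0,b}$-type space. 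The main obstacle, then, is a careful bookkeeping argument: one must check that at no stage of the infinite iteration does the estimate silently use more than the $\FL^p$-norm of the solution, and that the remainder terms vanish in the limit purely by $\FL^p$-continuity. Given that Theorem~\ref{THM:3} is stated to hold unconditionally for all $1 \leq p < \infty$, this bookkeeping is essentially already contained in its proof; the role of Theorem~\ref{THM:2} is to spell out that, in the range $p \leq \frac32$ where the nonlinearity is classically defined, the abstract ``normal form equation solution'' coincides with a \emph{distributional} solution of \eqref{WNLS1}, closing the loop between the two formulations.
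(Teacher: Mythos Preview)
Your argument for the renormalized equation \eqref{WNLS1} is correct and is exactly the paper's proof: any $C_t\FL^p_x$-solution with $p\le\frac32$ is a regular solution in the sense of Definition~\ref{DEF:regular}, hence by Proposition~\ref{PROP:bound} its interaction representation satisfies the normal form equation~\eqref{NFE1}, and the unconditional uniqueness in Theorem~\ref{THM:3} finishes the job.

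For the unrenormalized equation \eqref{NLS1}, however, you depart from the paper and introduce a gap. You propose to transport uniqueness through the gauge $\mathcal G$, asserting that $\int_\T|u|^2\,dx$ is ``conserved'' for an \emph{arbitrary} distributional solution $u\in C(\R;\FL^p(\T))$. That is not justified: $L^2$-conservation is known for solutions built by smooth approximation (in particular for the Theorem~\ref{THM:1} solution), but for a generic $C_t\FL^p_x$ solution --- precisely the object whose uniqueness is in question --- the formal identity $\tfrac{d}{dt}\|u\|_{L^2}^2=2\,\Im\int_\T|u|^4\,dx$ cannot be made rigorous since $u$ is only in $L^3_x$, not $L^4_x$. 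The paper sidesteps this entirely by running a separate normal form reduction directly on \eqref{NLS1} (Proposition~\ref{PROP:bound2} and Theorem~\ref{THM:4}), absorbing the extra resonant term $\RR_2(\uu)(n)=2i\|\uu\|_{L^2}^2\,\ft\uu(n)$ into the iteration; for $p\le 2$ this term satisfies the same estimate as $\RR$, so the machinery is unchanged. Your gauge idea can be repaired: replace the paper's $\mathcal G$ by the time-integrated gauge $v(t)=\exp\!\big(-2i\int_0^t\|u(s)\|_{L^2}^2\,ds\big)\,u(t)$, which sends any $C_tL^2_x$-solution of \eqref{NLS1} to a solution of \eqref{WNLS1} \emph{without} assuming conservation; uniqueness for \eqref{WNLS1} then forces $v_1=v_2$, hence $|u_1|=|u_2|$ pointwise, hence equal gauge phases, hence $u_1=u_2$. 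As written, though, step~(4) assumes what has yet to be proved.
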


Namely, unconditional uniqueness holds
for both the cubic NLS \eqref{NLS1}
and the renormalized cubic NLS \eqref{WNLS1}
in $\FL^p(\T)$, provided that $1\leq p \leq \frac{3}{2}$.
In \cite{GKO}, 
the first author (with Z.\,Guo and S.\,Kwon)
proved unconditional uniqueness in $H^\frac{1}{6}(\T)$
and Theorem \ref{THM:2} extends this result to the Fourier-Lebesgue setting. 
We also mention a recent work by Herr-Sohinger \cite{HS}
where they proved unconditional uniqueness of the cubic NLS \eqref{NLS1} 
in $L^p([-T, T] \times \T)$ for $p > 3$.
The main difference between unconditional uniqueness
and uniqueness for sensible weak solutions
is that the former does not assume that a solution comes with a sequence of smooth approximating
 solutions, while, by definition,  sensible weak solutions are equipped
with smooth approximating solutions.

\begin{remark}\rm

When $p = \infty$, 
the Fourier-Lebesgue space  $\FL^\infty(\T)$ does not admit smooth approximations
and hence is not suitable for well-posedness study.
Given $s \in \R$ and $1\leq p \leq \infty$, 
define $\FL^{s, p} (\T)$ by the norm:
\begin{align}
 \| f \|_{\FL^{s, p}} : = \|\jb{n}^s \ft f (n)\|_{\l^p_n(\Z)}.
 \label{FL1}
\end{align}

\noi
Note that $\FL^p(\T) = \FL^{0, p}(\T)$.
For $s < -\frac1p$,  we have $ \FL^\infty(\T) \subset \FL^{s, p} (\T)$
and thus we may wish to  study well-posedness
in $\FL^{s, p} (\T)$ for finite $p$ with  $s < -\frac1p$
since this space admits smooth approximations.
On the other hand, 
 the scaling critical regularity for the cubic NLS \eqref{NLS1} 
 with respect to the Fourier-Lebesgue spaces $\FL^{s, p}(\T)$
 is given by 
$s_\text{crit} = -\frac{1}{p}$.
In particular, 
 the cubic NLS \eqref{NLS1}
and the renormalized cubic NLS \eqref{WNLS1}
are known to be ill-posed in the (super)critical regime.\footnote{In fact, 
it is shown in \cite{Kis} that the cubic NLS \eqref{NLS1}
and the renormalized cubic NLS \eqref{WNLS1}
are  ill-posed even in the logarithmically subcritical regime.}
When $s < 0$, it is easy to modify the argument in 
\cite{BGT, CCT1, CO}
and show that the solution map is not locally uniformly continuous in 
$\FL^{s, p} (\T)$.
Furthermore,  when $s \leq s_\text{crit} = -\frac 1p$, 
the cubic NLS \eqref{NLS1}
and the renormalized cubic NLS~\eqref{WNLS1}
admit norm inflation;
given any $\eps > 0$, 
there exist a solution $u$ to \eqref{NLS1} or~\eqref{WNLS1} 
and $t  \in (0, \eps) $ such that 
\begin{align*}
 \| u(0)\|_{\FL^{s, p}} < \eps \qquad \text{ and } \qquad \| u(t)\|_{\FL^{s, p}} > \eps^{-1}.
 \end{align*} 

\noi
See \cite{Kis}.
The norm inflation in particular implies discontinuity of the solution map at the trivial function\footnote{One can easily combine
the argument in \cite{Kis, O} to prove norm inflation at general initial data, concluding discontinuity
of the solution map at every function $\FL^{s, p}(\T)$, provided that $s \leq s_\text{crit} = - \frac 1p$.} $u \equiv0$.
Lastly, 
a typical function in  $\FL^\infty(\T)$ is the Dirac delta function
and 
\eqref{NLS1} and~\eqref{WNLS1} on $\T$ are known to be ill-posed with the Dirac delta function as initial data;
see \cite{FO}.
See also  Kenig-Ponce-Vega \cite{KPV}
and Banica-Vega \cite{BV1, BV2}  for the works on the cubic NLS \eqref{NLS1}
on the real line
with the Dirac delta function
as initial data.

\end{remark}

\begin{remark}\rm
Following the argument in \cite{GKO}, 
we can easily extend Theorem \ref{THM:1} to  $\FL^{s, p}(\T)$
for $s > 0$ and $1\leq p < \infty$.
Similarly, the unconditional uniqueness result in Theorem \ref{THM:2}
can be extended to $\FL^{s, p}(\T)$
for (i) $s > 0$ and $1\leq p \leq \frac{3}{2}$
and (ii) $p > \frac 32$ and $s > \frac{2p-3}{3p}$.
Note that in these ranges of $(s, p)$, we have $\FL^{s, p}(\T)\hookrightarrow \FL^\frac{3}{2}(\T)
\hookrightarrow L^3(\T)$.

\end{remark}

\subsection{Normal form equation}\label{SUBSEC:1.3}

The main idea for proving Theorems \ref{THM:1} and \ref{THM:2}
is to apply an infinite iteration of normal form reductions
to \eqref{WNLS1}\footnote{In the following, we restrict our attention
to the renormalized cubic NLS \eqref{WNLS1}.
See Subsection \ref{SUBSEC:NLS}
for required modifications to handle the cubic NLS \eqref{NLS1}
in Theorem \ref{THM:2}.}
and transform the equation 
into a normal form equation (see \eqref{NFE1} below), 
which may look more complicated from the algebraic viewpoint
but exhibits better analytical properties than 
the original equation.

Let $S(t) = e^{i t\dx^2}$ denote the linear Schr\"odinger propagator.
We introduce the interaction representation:
\begin{align}
\uu(t) = S(-t) u(t) =  e^{-it\dx^2} {u}(t).
\label{IR1}
\end{align}

\noi
On the Fourier side, we have
 $\ft{\uu}(n, t) = e^{in^2t} \ft{u}(n,t)$. 
Then, \eqref{WNLS1} can be written as\footnote{Due to the presence of 
the time-dependent phase factor $e^{ i \Phi(\bar n) t}$, 
the non-resonant part $\N_1(\uu)$, viewed as a trilinear operator
is non-autonomous.
For notational simplicity, however, we suppress such $t$-dependence
when there is no confusion.
We apply this convention to  all the multilinear operators
appearing in this paper.
} 
\begin{align}
\begin{split}
\dt \ft \uu_n 
& =  i 
\sum_{\substack{n = n_1 - n_2 + n_3\\ n_2\ne n_1, n_3} }
e^{ i \Phi(\bar{n})t } 
\ft \uu(n_1) \cj{\ft \uu(n_2)}\ft \uu(n_3)
- i|\ft \uu(n)|^2 \ft \uu(n)  \\
& =:   \N_1(\uu)(n) +   \RR(\uu)(n) .
\end{split}
 \label{NLS4}
 \end{align}

\noi
Here,  the  phase function $\Phi(\bar{n})$ is defined by 
\begin{align}\label{Phi}
\Phi(\bar{n}):& = \Phi(n, n_1, n_2, n_3) = n^2 - n_1^2 + n_2^2- n_3^2 \notag \\
& = 2(n_2 - n_1) (n_2 - n_3)
= 2(n - n_1) (n - n_3),
\end{align}

\noi
where the last two equalities hold under $n = n_1 - n_2 + n_3$.
From \eqref{Phi}, we see that 
$\N_1$ corresponds to the non-resonant part (i.e. $\Phi(\bar{n})\ne 0$) of the nonlinearity
and $\RR$  corresponds to the resonant part.
Note that the Duhamel formulation
for \eqref{WNLS1}:
\begin{align*}
u(t) = S(t) u_0 + i \int_0^t S(t - t') \N(u)(t') dt'
\end{align*}	

\noi
is now expressed as a system of  integral equations:
\begin{align}
\ft \uu(n, t) = \ft u_0(n) +  \int_0^t \Big\{ \N_1(\uu)(n) +   \RR(\uu)(n)\Big\}(t')dt'
\label{DN2}
\end{align}

\noi
for $n \in \Z$.
In the following, the space $\FL^\frac{3}{2}(\T)$ plays an important role
and thus we introduce the following definition of {\it regular} solutions.

\begin{definition}\label{DEF:regular}\rm
We say that $u$ and $\uu$
are regular solutions
to \eqref{WNLS1} and \eqref{NLS4}, respectively, 
if $u$ and $\uu$ are solutions to 
to \eqref{WNLS1} and \eqref{NLS4}, respectively, 
such that 
$u \in C(\R; \FL^\frac{3}{2}(\T))$
and $\uu \in C(\R; \FL^\frac{3}{2}(\T))$, respectively.

\end{definition}

The main idea is to apply a normal form reduction to \eqref{NLS4},
namely integration by parts in~\eqref{DN2},
to exploit the oscillatory nature of the non-resonant contribution.
As in \cite{GKO, KOY}, 
we implement an infinite iteration of normal form reductions
and derive the following {\it normal form equation}:
\begin{align}
\begin{split}
\uu(t) 
 =   \uu(0) 
&  +    \sum_{j = 2}^\infty  \N_0^{(j)}(\uu)(t)
 - \sum_{j = 2}^\infty \N_0^{(j)}(\uu)(0)  \\
& 
+ \int_0^t \bigg\{
\sum_{j = 1}^\infty \N_{1}^{(j)}(\uu)(t')   + \sum_{j = 1}^\infty \RR^{(j)}(\uu)(t')\bigg\} dt',
\end{split}
\label{NFE1}
\end{align}

\noi
where $\{\N_0^{(j)}\}_{j=2}^{\infty}$ are time-dependent $(2j-1)$-linear operators
while  $\{\N_{1}^{(j)}\}_{j=1}^{\infty}$
and $\{\RR^{(j)}\}_{j=1}^{\infty}$
 are  time-dependent $(2j+1)$-linear operators.
As we see in Section \ref{SEC:3}, 
multilinear dispersion effects are already embedded
in  these multilinear terms, 
which allows us to
prove that these multilinear operators
are bounded
 in $C ([-T,T]; \FL^p(\T))$ for any $1\leq p < \infty$.
 Moreover, we show that the normal form equation~\eqref{NFE1}
 is equivalent to \eqref{NLS4} and the renormalized cubic NLS~\eqref{WNLS1}
 in $C(\R; \FL^\frac{3}{2}(\T))$.
 See Proposition~\ref{PROP:bound}.
As a consequence, 
we can easily prove local well-posedness
of the normal form equation \eqref{NFE1}
in $\FL^p(\T)$ by a simple contraction argument
{\it without} any auxiliary function spaces.

\begin{theorem}\label{THM:3}
Let $1\leq p < \infty$.
Then, the normal form equation \eqref{NFE1}
is unconditionally globally well-posed in $\FL^p(\T)$.
\end{theorem}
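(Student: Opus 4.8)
The plan is to prove local well-posedness of \eqref{NFE1} in $\FL^p(\T)$ by a direct contraction-mapping argument on a ball in $C([-T,T];\FL^p(\T))$, with no auxiliary spaces, and then upgrade to global well-posedness using the a priori bound and equivalence properties already available. First I would set up the solution map $\Gamma$ associated to the right-hand side of \eqref{NFE1}: given $\uu\in C([-T,T];\FL^p(\T))$, define $\Gamma\uu(t)$ to be the sum $\uu(0)+\sum_{j\ge2}\N_0^{(j)}(\uu)(t)-\sum_{j\ge2}\N_0^{(j)}(\uu)(0)+\int_0^t\big(\sum_{j\ge1}\N_1^{(j)}(\uu)(t')+\sum_{j\ge1}\RR^{(j)}(\uu)(t')\big)\,dt'$. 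The whole argument then reduces to establishing, for each family of multilinear operators, uniform-in-time $\FL^p\to\FL^p$ bounds of the schematic form $\|\N_0^{(j)}(\uu)(t)\|_{\FL^p}\le C^j\|\uu\|_{C_T\FL^p}^{2j-1}$ and $\|\N_1^{(j)}(\uu)(t)\|_{\FL^p},\,\|\RR^{(j)}(\uu)(t)\|_{\FL^p}\le C^j\|\uu\|_{C_T\FL^p}^{2j+1}$, together with the corresponding multilinear difference estimates (replacing one factor of $\uu$ by $\uu_1-\uu_2$ and the rest by either $\uu_1$ or $\uu_2$). These multilinear bounds are exactly the content of Section~\ref{SEC:3} as flagged in the text, so I would invoke them; the point I want to stress here is only how they assemble into the fixed-point scheme.

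Granting the multilinear estimates, the contraction argument is routine: summing the geometric-type series, one gets $\|\Gamma\uu-\uu(0)\|_{C_T\FL^p}\lesssim\sum_{j\ge2}C^j R^{2j-1}+T\sum_{j\ge1}C^j R^{2j+1}$ on the ball $B_R=\{\|\uu\|_{C_T\FL^p}\le R\}$ with $R\sim 2\|\uu_0\|_{\FL^p}$, and the series converge provided $R$ is small relative to $1/\sqrt C$ — i.e. provided $\|\uu_0\|_{\FL^p}$ is small — or else provided $T$ is small; in fact the key structural feature is that the $\N_0^{(j)}$ and $\N_1^{(j)}$ pieces carry convergent series in $j$ for $R$ below an absolute threshold, while the remaining smallness needed for self-mapping and contraction is supplied by the time integral (the factor $T$) and by subtracting the data term. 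So I would first treat small data at unit time, then note that because the $\FL^p$-norm of the constructed solution stays comparable to the data (from the same bounds), the local time of existence depends only on $\|\uu_0\|_{\FL^p}$; combined with the a priori bound \eqref{bound1} — which applies to \eqref{WNLS1} and transfers to \eqref{NLS4} via \eqref{IR1}, and, by the equivalence of \eqref{NFE1} with \eqref{NLS4} for regular solutions together with a density/limiting argument, yields a global $\FL^p$ bound for solutions of \eqref{NFE1} — one iterates the local theory to obtain global existence and uniqueness. Uniqueness here is \emph{unconditional}: a solution is required only to lie in $C([-T,T];\FL^p(\T))$, and the difference estimates show any two such solutions with the same data coincide, since the difference $\uu_1-\uu_2$ satisfies a closed estimate $\|\uu_1-\uu_2\|_{C_\tau\FL^p}\le \tfrac12\|\uu_1-\uu_2\|_{C_\tau\FL^p}$ on a short interval $[-\tau,\tau]$, hence vanishes, and one propagates along the whole interval by continuity.

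The main obstacle — and the reason the result is nontrivial for large $p$ — is proving the multilinear estimates with constants $C^j$ that grow only geometrically in $j$, uniformly in the cutoff parameters used to generate the normal form, in the weak $\FL^p$-topology. In $L^2$ one has orthogonality and the Strichartz/$X^{s,b}$ machinery to spend; here one must extract enough smoothing purely from the phase functions $\Phi(\bar n)$ produced at each stage of the normal form reduction (the iterated analogue of \eqref{Phi}), controlling divisor sums $\sum 1/|\Phi|$ and the combinatorial proliferation of frequency trees, all while measuring everything in $\ell^p_n$ rather than $\ell^2_n$. I expect this to hinge on (i) a careful bookkeeping of which frequency differences in each generation are forced to be large, so that summation in the "new" frequencies is absolutely convergent with an $O(1)$ (per level) loss, and (ii) a counting argument showing the number of admissible trees at level $j$ is at most $C^j$, so that the per-level $O(1)$ losses multiply to a harmless $C^j$. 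Once these are in place — which is the business of Section~\ref{SEC:3} — Theorem~\ref{THM:3} follows from the fixed-point scheme above.
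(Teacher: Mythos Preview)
Your overall architecture is right, and you correctly identify that once the multilinear bounds are in hand the argument is a routine contraction in $C_T\FL^p$. But there is a genuine gap in how you propose to close the fixed-point for \emph{large} initial data.

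You assert bounds of the form $\|\N_0^{(j)}(\uu)(t)\|_{\FL^p}\le C^j\|\uu\|_{C_T\FL^p}^{2j-1}$ with a constant growing only geometrically in $j$. With such bounds, the boundary-term series $\sum_{j\ge2}\N_0^{(j)}(\uu)(t)-\N_0^{(j)}(\uu)(0)$ contributes $\sum_{j\ge2}C^jR^{2j-1}$ to $\|\Gamma\uu\|_{C_T\FL^p}$, and this carries \emph{no} factor of $T$. So your series only converges when $CR^2<1$, i.e.\ for small data. You seem to notice this (``provided $\|\uu_0\|_{\FL^p}$ is small''), but the escape route you sketch --- ``first treat small data at unit time, then \ldots\ the local time depends only on $\|\uu_0\|_{\FL^p}$, combined with the a priori bound'' --- does not work: the a priori bound \eqref{bound1} lets you \emph{iterate} a large-data local result, but it cannot manufacture large-data local existence from small-data local existence. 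There is no scaling on $\T$ to rescue you.

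The mechanism the paper uses, which you are missing, is that the multilinear operators $\N_0^{(j)}$, $\N_1^{(j)}$, $\RR^{(j)}$ themselves depend on a free parameter $K\ge1$ (the threshold separating ``nearly resonant'' from ``highly non-resonant'' at each generation), and the constants are not $C^j$ but rather $C_{0,j}\sim C_pK^{4(1-j)}/j!$ for the boundary and resonant pieces, while $C_{1,j}\sim C_pK^{16/(p'-1)+4(1-j)}/j!$ for $\N_1^{(j)}$; see Proposition~\ref{PROP:bound} and \eqref{decay1}. Given data of size $R$, one chooses $K=K(R)$ large so that the boundary series $\sum_{j\ge2}K^{4(1-j)}R^{2j-1}/j!$ is $\le\tfrac1{10}$ regardless of $R$; the price is that finitely many of the $\N_1^{(j)}$ terms now carry positive powers of $K$, but those sit under the time integral and are absorbed by taking $T=T(R)$ small. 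This trade-off between $K$ and $T$ is the point that makes the contraction close for arbitrary data, and it is absent from your sketch. The same issue recurs in your unconditional-uniqueness step: the difference of boundary terms $\N_0^{(j)}(\uu_1)-\N_0^{(j)}(\uu_2)$ must be made $\le\tfrac12\|\uu_1-\uu_2\|_{C_\tau\FL^p}$ without a factor of $\tau$, and again this requires the $K$-dependence of the constants.
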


In \cite{GKO}, an analogous result was shown
in $L^2(\T)$.
When $p > 2$, 
the $\FL^p$-norm is weaker than the $L^2$-norm.
In particular, when $p \gg 1$, 
this fact makes it much harder to 
show  convergence of the series
in the normal form equation \eqref{NFE1}
with respect to the  $\FL^p$-topology. 

Once we establish the relevant multilinear estimates
(Proposition \ref{PROP:bound}), 
the proof of unconditional local well-posedness
for  the normal form equation \eqref{NFE1}
follows from a simple contraction  argument.
Moreover,  we show that 
the local existence time $T$ depends only on the size of the initial data $\| u_0\|_{\F L^p}$
and consequently, we conclude that 
solutions exist globally in time in view of the global-in-time bound \eqref{bound1}
from \cite{OW2}.  See also Appendix \ref{SEC:A}.

Finally,  note that Theorem \ref{THM:2}
follows easily thanks to the equivalence of \eqref{WNLS1} and the normal form equation \eqref{NFE1}
for regular solutions belonging to 
$C(\R; \FL^\frac{3}{2}(\T))$.
The contraction argument in proving Theorem \ref{THM:3}
yields the following Lipschitz bound:
\begin{align}
\sup_{t\in [-T, T]}\| \uu (t) - \vv(t) \|_{\FL^p}
\leq C(T, R) \| \uu (0) - \vv(0) \|_{\FL^p}
\label{DN3}
\end{align}

\noi
for any $T > 0$, where $R >0$ satisfies $\|\uu(0)\|_{\FL^p}, \|\vv(0)\|_{\FL^p} \leq R$.
Furthermore, from \eqref{WNLS1}, \eqref{NLS4}, and \eqref{NFE1} with \eqref{non1} and \eqref{IR1}, 
we obtain 
\begin{align}
\begin{split}
\int_0^t \N(u)(t') dt'  
&    =  S(t) \Bigg\{ \sum_{j = 2}^\infty  \N_0^{(j)}(S(-\,\cdot)u)(t)
 - \sum_{j = 2}^\infty \N_0^{(j)}(u)(0)  \\
& \hphantom{X}
+ \int_0^t \bigg[
\sum_{j = 1}^\infty \N_{1}^{(j)}(S(-\,\cdot)u)(t')   + \sum_{j = 1}^\infty \RR^{(j)}(S(-\,\cdot)u)(t')\bigg] dt'\Bigg\}.
\end{split}
\label{NFE2}
\end{align}

\noi
Then,  \eqref{DN3} and \eqref{NFE2}
together with the multilinearity of the summands in \eqref{NFE2}
and the unitarity of the linear operator $S(t)$ in $\FL^p(\T)$ 
allow us to conclude 
convergence of 
 smoothed nonlinearities $\N(T_N u)$
or the nonlinearities $\N(u_m)$ of smooth approximating solutions $u_m$
required in Definitions \ref{DEF:sol2} and \ref{DEF:sol}.
This is a sketch of the proof of 
Theorem~\ref{THM:1}.

In Section~\ref{SEC:2}, we present the proofs of the main results, 
assuming the bounds on the multilinear operators 
 $\{\N_0^{(j)}\}_{j=2}^{\infty}$,    $\{\N_{1}^{(j)}\}_{j=1}^{\infty}$, 
and $\{\RR^{(j)}\}_{j=1}^{\infty}$ (Proposition \ref{PROP:bound}).
In Section~\ref{SEC:3}, 
we implement an infinite iteration of normal form reductions
as in \cite{GKO} and prove Proposition~\ref{PROP:bound}.

\begin{remark}\rm
Let $ p > \frac 32$.
Given $u \in  C([-T, T]; \FL^p(\T))$, 
we can not, in general, make sense of the cubic nonlinearity $\N(u)$ as a distribution
since $\FL^p(\T) \not\subset L^3(\T)$.
In other words, 
we can not estimate the cubic nonlinearity  without
relying on some auxiliary function space.
In \eqref{NFE2}, we re-expressed the cubic nonlinearity
into series of the multilinear terms
of increasing degrees.
On the one hand, this transformation brings
algebraic complexity.
On the other hand,  
 the right-hand side of \eqref{NFE2} 
 is convergent
for  $u \in  C([-T, T]; \FL^p(\T))$, 
allowing us 
to make sense of the right-hand side of \eqref{NFE2}
as a distribution.
Namely, while the left-hand side of~\eqref{NFE2}
and the right-hand side of \eqref{NFE2}
coincide
for regular solutions $u \in  C([-T, T]; \FL^\frac{3}{2}(\T))$, 
 the right-hand side of \eqref{NFE2}
 provides a better formulation 
 of the nonlinearity
 for rougher functions $u \in C([-T, T]; \FL^p(\T))$, $\frac 32 < p < \infty$.
 In this sense, 
 we can view
 the right-hand side of \eqref{NFE2}
 as a further renormalization of the renormalized nonlinearity $\N(u)$
 in \eqref{non1}.

By expressing the  normal form equation \eqref{NFE1}
in terms of the original function  $u(t) = S(t)\uu(t)$, 
we obtain
 \begin{align}
\begin{split}
u(t) 
 =   S(t) u(0) 
&  +    S(t) \sum_{j = 2}^\infty  \Ns_0^{(j)}(u)(t) 
 -  S(t)\sum_{j = 2}^\infty \Ns_0^{(j)}(u)(0)  \\
& 
+ \int_0^t S(t - t ') \bigg\{
\sum_{j = 1}^\infty \Ns_1^{(j)}(u)(t')   + \sum_{j = 1}^\infty \Rs^{(j)}(u)(t')\bigg\} dt',
\end{split}
\label{NFE1a}
\end{align}

\noi
where 
\begin{align}
\begin{split}
\Ns^{(j)}_0(u)(t) & =    \N^{(j)}_0(S(-\,\cdot)u)(t),\\
\Ns^{(j)}_1(u)(t) & =  S(t) \N^{(j)}_1(S(-\,\cdot)u)(t),\\ 
\Rs^{(j)}(u)(t) & =  S(t)  \RR^{(j)}(S(-\,\cdot)u)(t).
\end{split}
\label{NFE1b}
\end{align}

\noi
As we see in Section \ref{SEC:3}, 
 the multilinear operators $S(t)\Ns_0^{(j)}(t)$, $\Ns_1^{(j)}$, and $\Rs^{(j)}$
are autonomous.  
The discussion above shows that 
 the normal form equation \eqref{NFE1a}
 expressed in terms of $u(t) = S(t)\uu(t)$
is a better model to study than the renormalized cubic NLS \eqref{WNLS1}
(and the cubic NLS \eqref{NLS1})
in the low regularity setting, 
which can be viewed as a further renormalization 
to the (renormalized) cubic NLS.

Lastly, we point out that the terms on the left-hand side of \eqref{NFE1b}
are indeed autonomous (unlike the non-autonomous multilinear terms in \eqref{NFE2}).
See Section \ref{SEC:3}.

\end{remark}

\begin{remark}\rm
A precursor to this normal form approach  first appeared in the work of
Babin-Ilyin-Titi \cite{BIT} in the study of  KdV on $\T$, establishing unconditional well-posedness of the KdV in $L^2(\T)$.
See also \cite{KO}.
In \cite{GKO}, the first author with Z.\,Guo and S.\,Kwon further developed this normal form approach
and introduced an infinite iteration scheme of normal form reductions
in the context of the cubic NLS on the circle.
In this series of work, 
 the viewpoint of unconditional well-posedness
 was first introduced in \cite{KO}, 
while the viewpoint of the (Poincar\'e-Dulac) normal form reductions
was first introduced in \cite{GKO}.
This normal form approach has also been used
to prove nonlinear smoothing \cite{ET},  improved energy estimates \cite{OST, OW3}, 
and construct 
 an infinite sequence of 
invariant quantities under the dynamics \cite{CGKO}.

\end{remark}

\begin{remark}\rm
In a recent paper \cite{FO}, the first author with Forlano
studied the cubic NLS on $\R$.
In particular, by implemented an infinite iteration
of normal form reductions, 
they proved analogues of Theorems \ref{THM:1}, \ref{THM:2},
and \ref{THM:3} in almost critical Fourier-Lebesgue spaces $\FL^p(\R)$, $2\leq p < \infty$, 
and almost critical modulation spaces $M^{2, p}(\R)$, $2\leq p < \infty$.
Relevant multilinear estimates were studied
based on the idea introduced in \cite{KOY}, 
namely, successive applications
of basic trilinear estimates (called  localized modulation estimates).

\end{remark}

\section{Proof of the main results}
\label{SEC:2}

In this section, we present the proofs of the main results 
(Theorems \ref{THM:1}, \ref{THM:2}, and \ref{THM:3}),
assuming the validity of 
the transformation of the equation \eqref{DN2}
to the normal form equation \eqref{NFE1}
and 
the boundedness of the multilinear operators in \eqref{NFE1} (Proposition~\ref{PROP:bound}).

\subsection{Series expansion of regular solutions}

In Section \ref{SEC:3}, we implement an infinite iteration of normal form reductions
and transform  the equation \eqref{NLS4}
into the normal form equation \eqref{NFE1}
for regular solutions.
The following proposition summarizes the properties of the multilinear operators in \eqref{NFE1}.
Given $R>0$, 
we use $B_R$ to denote 
the ball of radius $R$ 
centered at the origin
in various function spaces.

\begin{proposition}
\label{PROP:bound}
Let $1 \leq p < \infty$
and $T>0$.
Then, there exist
time-dependent  multilinear operators 
$\{\N_0^{(j)}\}_{j=2}^{\infty}$, 
$\{\N_{1}^{(j)}\}_{j=1}^{\infty}$, and 
 $\{\RR^{(j)}\}_{j=1}^{\infty}$, depending on the parameter 
  $K = K(R) \geq 1$
  such that any regular solution 
 $\uu \in C([-T,T]; \F L^{\frac32}(\T))$ 
to   \eqref{NLS4} with $\uu(0) \in B_R\subset \FL^p(\T)$ 
satisfies the following normal form equation:
 \begin{align}
 \begin{split}
\uu(t) - \uu(0) & =     \sum_{j = 2}^\infty  \N^{(j)}_0(\uu)(t)
 - \sum_{j = 2}^\infty \N^{(j)}_0(\uu)(0)  \\
& \hspace{4mm} + \int_0^t \bigg\{
\sum_{j = 1}^\infty \N^{(j)}_1(\uu)(t') + \sum_{j = 1}^\infty \RR^{(j)}(\uu)(t') \bigg\} dt'
\end{split}
\label{NFE2a}
\end{align}

\noi
in $ C([-T,T]; \F L^{\frac 32}(\T))$. 
Moreover, 
 $\{\N_0^{(j)}\}_{j=2}^{\infty}$ are $(2j-1)$-linear operators, 
while $\{\N_1^{(j)}\}_{j=1}^{\infty}$
and $\{\RR^{(j)}\}_{j=1}^{\infty}$
are   $(2j+1)$-linear operators \textup{(}depending on $t\in [-T, T]$\textup{)},  
satisfying the following bounds on $ \F L^p(\T)$:\footnote{Here, we view 
$\N_0^{(j)} = \N_0^{(j)}(t)$, 
$\N_{1}^{(j)} = \N_{1}^{(j)}(t)$, and 
 $\RR^{(j)} = \RR^{(j)}(t)$ as 
 multilinear operators
 acting on $\FL^p(\T)$
 with a parameter $t \in [-T, T]$.
 The same comment applies to $\RR_2^{(j)}$ in \eqref{est-R2}.
}
\begin{align}
\sup_{t \in [-T, T]} \big\| \N_0^{(j)} (t)(f_1, f_2, \cdots, f_{2j-1}) \big\|_{\F L^p(\T)} 
& \le C_{0,j} \prod_{i=1}^{2j-1} \| f_i \|_{\F L^p (\T)},\label{est-N0}\\
\sup_{t \in [-T, T]} \big\| \N_1^{(j)} (t)(f_1, f_2, \cdots, f_{2j+1}) \big\|_{\F L^p(\T)} 
& \le C_{1,j} \prod_{i=1}^{2j+1} \| f_i \|_{\F L^p (\T)}\label{est-N1},\\
\sup_{t \in [-T, T]} \big\| \RR^{(j)}(t) (f_1, f_2, \cdots, f_{2j+1}) \big\|_{\F L^p(\T)} 
& \le C_{0,j} \prod_{i=1}^{2j+1} \| f_i \|_{\F L^p (\T)},\label{est-R1}
\end{align}

\noi
for any $f_i\in \F L^p(\T)$, where 
\begin{align}
C_{0,j}(K)  =C_p \frac{ K^{4(1-j)}}{ j!}
\qquad \text{and}\qquad 
C_{1,j}(K)  = C_p \frac{K^{\frac{16}{p'-1} + 4(1-j)}}{ j!}
\label{decay1}
\end{align}

\noi
for some absolute constant  $C_p>0$  depending only on $p$.
\end{proposition}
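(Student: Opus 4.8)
The plan is to carry out an infinite iteration of normal form reductions directly on the Duhamel formulation \eqref{DN2} in the interaction representation, following the scheme of \cite{GKO, KOY}, while tracking quantitatively how a fixed negative power of the truncation parameter $K$ (to be chosen large in terms of $R$ later, in Theorem~\ref{THM:3}) is gained at each reduction. Recall that a regular solution lies in $C([-T,T];\FL^{\frac32}(\T))$ by Definition~\ref{DEF:regular}; this guarantees that $t\mapsto\ft\uu(n,t)$ is $C^1$ with $\partial_t\ft\uu$ given by the series \eqref{NLS4}, which is absolutely convergent in $\FL^{\frac32}(\T)$, so every integration by parts in time performed below is legitimate.

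First I would perform a single normal form reduction. Starting from \eqref{DN2}, I split the non-resonant sum $\N_1(\uu)$ according to whether $|\Phi(\bar n)|$ lies above or below a threshold depending on $K$ (and, as usual, on the frequencies involved). On the highly non-resonant piece I integrate by parts in $t'$ via $e^{i\Phi(\bar n)t'}=\frac{1}{i\Phi(\bar n)}\partial_{t'}e^{i\Phi(\bar n)t'}$: the boundary contributions at $t'=t$ and $t'=0$ are the first operators $\N_0^{(2)}(\uu)(t)$ and $\N_0^{(2)}(\uu)(0)$ --- trilinear, with a kernel carrying the factor $\Phi(\bar n)^{-1}$ --- while in the remaining term $\partial_{t'}$ falls on one of $\ft\uu(n_1),\cj{\ft\uu(n_2)},\ft\uu(n_3)$; the near-resonant leftover is (essentially) $\N_1^{(1)}(\uu)$. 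Into the term where $\partial_{t'}$ hits a factor of $\ft\uu$ I substitute \eqref{NLS4}: its resonant part becomes $\RR^{(2)}(\uu)$, and its non-resonant part a $5$-linear term carrying two oscillatory factors and one denominator, which I feed back into the same procedure. Iterating $j$ times produces the boundary operators $\N_0^{(k)}$ ($2\le k\le j+1$), the resonant operators $\RR^{(k)}$ ($1\le k\le j+1$), the near-resonant operators $\N_1^{(k)}$ ($1\le k\le j$), and a single remaining non-resonant term; the limit is the formal expansion \eqref{NFE1}. Organizing these operators through the recursive/tree bookkeeping of \cite{GKO} gives, among other things, the bound $\#\{\text{terms in }\N_0^{(j)},\N_1^{(j)},\RR^{(j)}\}\lesssim C^{j}j!$ on the number of terms of generation $j$.

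Next I would prove the multilinear estimates \eqref{est-N0}--\eqref{est-R1} with the constants \eqref{decay1}. The building block is a ``localized modulation'' trilinear estimate on $\FL^p$: when the frequency supports of $f,g,h$ force the trilinear modulation $\Phi$ to have size $\sim M$, the operator with kernel $\Phi^{-1}$ is bounded on $\FL^p(\T)$ with constant $\lesssim M^{-\gamma}$ for some $\gamma=\gamma(p)>0$, together with a companion estimate for the near-resonant and resonant pieces (no $\Phi^{-1}$, but with the restriction $|\Phi|\lesssim M$) whose constant is $\lesssim M^{\delta}$ for some $\delta=\delta(p)>0$. Both rest on the factorization $\Phi(\bar n)=2(n-n_1)(n-n_3)$ of \eqref{Phi}, which controls the sizes of the summation ranges, combined with H\"older's inequality: decay of the kernel in a frequency variable is what converts the available $\ell^p$-control of the $\ft f_i$ into summable quantities, at the cost of a power of $M$, and this loss is the source of the exponents in \eqref{decay1}. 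Estimating a term of generation $j$ then amounts to composing these trilinear estimates $j$ times: the $j-1$ ``good'' denominators (each restricted to $|\Phi_k|$ above a power of $K$) yield the gain accounting for $K^{4(1-j)}$; the single near-resonant truncation present in an $\N_1^{(j)}$-term yields the loss $K^{16/(p'-1)}$; summing the $j$-fold geometric-type series over the ordered modulation scales imposed by the scheme produces the $1/j!$; and summing over the $\lesssim C^{j}j!$ terms of generation $j$ absorbs the combinatorial factor, which gives \eqref{decay1}.

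Finally, for a regular solution $\uu\in C([-T,T];\FL^{\frac32}(\T))$ I would apply \eqref{est-N0}--\eqref{est-R1} with $p=\frac32$ (so $p'=3$): the constants \eqref{decay1} then decay in $j$ faster than any geometric rate, so every series in \eqref{NFE2a} converges in $C([-T,T];\FL^{\frac32}(\T))$, while the remainder after $j$ reductions --- a single $(2j+3)$-linear term --- is bounded by $C_{1,j}(R')(R')^{2j+3}$ with $R'=\sup_{t\in[-T,T]}\|\uu(t)\|_{\FL^{\frac32}}$, which tends to $0$; since each reduction is an identity for regular solutions, letting $j\to\infty$ yields \eqref{NFE2a}. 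The hard part will be the multilinear estimates in the regime $p\gg1$: unlike the $L^2$-analysis of \cite{GKO}, the $\FL^p$-norm is so weak that the basic trilinear estimate carries a genuine loss --- a power of the largest frequency, equivalently the need to upgrade $\ell^p$- to $\ell^1$-type control in one factor --- and one must verify that at every reduction this loss is strictly dominated by the modulation gain available there, uniformly in $1\le p<\infty$, so that the decay in $j$, the factor $1/j!$, and the clean exponent $16/(p'-1)$ all survive. Isolating the single ``bad'' near-resonant denominator responsible for that exponent, and checking that it occurs only once regardless of $j$, is the delicate point.
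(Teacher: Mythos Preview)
Your overall strategy---iterated normal form reductions on \eqref{DN2}, tree bookkeeping, multilinear estimates with decay in $j$, remainder $\to 0$---matches the paper's. Two points, however, are genuine gaps.

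First, on the multilinear estimates. The paper does \emph{not} proceed by composing trilinear ``localized modulation'' estimates as in \cite{KOY}; it estimates each $(2j\pm1)$-linear operator directly by a single H\"older step followed by a counting argument (see \eqref{N^J_0-3}--\eqref{BJ1} and \eqref{count}). That counting rests on the divisor bound \eqref{divisor}: for fixed $n^{(k)}$ and $\mu_k$, the factorization \eqref{mu} forces $(n^{(k)}-n_1^{(k)})(n^{(k)}-n_3^{(k)})=\mu_k/2$, whence only $O(|\mu_k|^{0+})$ admissible triples. Your mechanism ``decay of the kernel in a frequency variable'' covers the pieces carrying a $\Phi^{-1}$ factor, but it does not handle the near-resonant part $\N_1^{(j)}$ (no kernel, only the cutoff $|\wt\mu_j|\le((2j+1)K)^\ta$), nor the level-set counting \eqref{count} that drives the $\N_0^{(j)}$ bound; on $\T$ the divisor estimate is the indispensable arithmetic input, and it is missing from your proposal. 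Relatedly, the threshold in \eqref{Aj} must \emph{grow} with $j$ like $((2j+1)K)^\ta$: it is the product $\prod_{k=1}^{j-1}((2k+1)K)^{-4}$ that supplies the decay $((2j-1)!!)^{-4}$, which after absorbing $|\mathfrak{T}(j)|=(2j-1)!!$ still leaves enough room for the $1/j!$ in \eqref{decay1}. Your ``ordered modulation scales'' hint does not make this explicit.

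Second, the remainder argument does not close as written. The leftover after $J$ reductions, $\N_2^{(J)}$, lives on $\bigcap_{k\le J}A_k^c$ but carries \emph{no} restriction on the last phase, so the constant $C_{1,J}$---which comes precisely from the cutoff $|\wt\mu_J|\le((2J+1)K)^\ta$---is unavailable; indeed $\N_2^{(J)}$ is not obviously bounded on $\FL^{3/2}$ at all, because the final triple convolution has no smallness. The paper fixes this (Lemma~\ref{LEM:error}) by estimating the remainder only in $\FL^\infty$, using Young's inequality in the form $\|\N^{(1)}(\uu)\|_{\FL^\infty}+\|\RR^{(1)}(\uu)\|_{\FL^\infty}\lesssim\|\uu\|_{\FL^{3/2}}^3$, to get $\|\N_2^{(J)}(\uu)\|_{\FL^\infty}\to0$; then, since by \eqref{est-N0}--\eqref{est-R1} the partial sums of the good terms already converge in $C_T\FL^{3/2}$, uniqueness of the limit (comparing the $\FL^{3/2}$ and $\FL^\infty$ topologies) upgrades \eqref{NFE2a} to $C_T\FL^{3/2}$.
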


In Proposition \ref{PROP:bound}, 
we imposed a strong regularity assumption:
 $\uu \in C([-T,T]; \F L^{\frac32}(\T))$.
 This regularity assumption can be easily relaxed.

\begin{corollary}
\label{COR:1}
Let $1 \leq p < \infty$ and $T>0$.
Suppose that a solution $\uu  \in C([-T, T]; \FL^p(\T))$ to \eqref{NLS4}
admits a sequence of smooth approximating solutions $\{\uu_m\}_{m\in \NB}$
in the sense that 
\textup{(i)} $\uu_m$ is a smooth solution to \eqref{NLS4}
and \textup{(ii)} $\uu_m$ converges to $\uu$ in $ C([-T, T]; \FL^p(\T))$.
Then, $\uu$  satisfies the normal form equation \eqref{NFE2}
in $ C([-T,T]; \F L^{p}(\T))$.

\end{corollary}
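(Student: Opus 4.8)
The plan is to deduce Corollary~\ref{COR:1} from Proposition~\ref{PROP:bound} by a density/approximation argument, passing the normal form equation \eqref{NFE2a}\,--\,which already holds for the regular approximants\,--\,to the limit. The key point is that each smooth solution $\uu_m$ is in particular a regular solution (it lies in $C([-T,T];\FL^{3/2}(\T))$, since smooth functions lie in $\FL^{3/2}(\T)$), so by Proposition~\ref{PROP:bound} applied with a common radius $R$ (choose $R$ so that $\sup_m\|\uu_m(0)\|_{\FL^p}\le R$, which is possible since $\uu_m(0)\to\uu(0)$ in $\FL^p$) each $\uu_m$ satisfies
\begin{align}
\begin{split}
\uu_m(t) - \uu_m(0) & =     \sum_{j = 2}^\infty  \N^{(j)}_0(\uu_m)(t)
 - \sum_{j = 2}^\infty \N^{(j)}_0(\uu_m)(0)  \\
& \hspace{4mm} + \int_0^t \bigg\{
\sum_{j = 1}^\infty \N^{(j)}_1(\uu_m)(t') + \sum_{j = 1}^\infty \RR^{(j)}(\uu_m)(t') \bigg\} dt'
\end{split}
\label{NFE2m}
\end{align}
in $C([-T,T];\FL^{3/2}(\T))$, and crucially with the \emph{same} operators $\N_0^{(j)},\N_1^{(j)},\RR^{(j)}$ depending only on the parameter $K=K(R)$, not on $m$.

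Next I would observe that, by the multilinear bounds \eqref{est-N0}\,--\,\eqref{est-R1} together with the summable decay of the constants $C_{0,j},C_{1,j}$ in \eqref{decay1} (note $K\ge 1$, so $K^{4(1-j)}\le 1$ and $\sum_j C_{0,j},\sum_j C_{1,j}<\infty$ with $K$ chosen large enough depending on $R$), the three series on the right-hand side of \eqref{NFE2m} converge absolutely and uniformly in $C([-T,T];\FL^p(\T))$ on the ball $B_R$, and each defines a locally Lipschitz map from $B_R\subset\FL^p(\T)$ into $C([-T,T];\FL^p(\T))$. Here I would use the standard multilinearity argument: for an $n$-linear operator $M$ bounded on $\FL^p$ with norm $C$, one has $\|M(\uu_m,\dots)-M(\uu,\dots)\|\le nC\,\max(\|\uu_m\|,\|\uu\|)^{n-1}\|\uu_m-\uu\|$ by telescoping, so each summand is continuous in the input, and summing over $j$ (using the geometric-type decay to dominate the tails uniformly on $B_R$) gives continuity of the full right-hand side as a map $C([-T,T];\FL^p(\T))\supset B_R\to C([-T,T];\FL^p(\T))$. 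The time integral $\int_0^t(\cdots)\,dt'$ is a bounded operation on $C([-T,T];\FL^p(\T))$, so it is harmless.

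Then the conclusion follows by letting $m\to\infty$ in \eqref{NFE2m}: the left-hand side $\uu_m(t)-\uu_m(0)$ converges to $\uu(t)-\uu(0)$ in $C([-T,T];\FL^p(\T))$ by hypothesis (ii) of the corollary, and the right-hand side converges to the right-hand side of \eqref{NFE2a} with $\uu_m$ replaced by $\uu$, by the continuity just established. This yields \eqref{NFE2a} for $\uu$ in $C([-T,T];\FL^p(\T))$, which is exactly the claimed normal form equation \eqref{NFE2} at regularity $\FL^p$ rather than $\FL^{3/2}$. One bookkeeping point to check is that the radius $R$ and hence the parameter $K=K(R)$ must be chosen once and for all at the start so that the same operators appear for all $\uu_m$ and for the limit $\uu$; since $\|\uu_m(0)\|_{\FL^p}\to\|\uu(0)\|_{\FL^p}$ this is immediate.

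The main (and essentially only) obstacle is the uniform-in-$m$ control of the infinite series in $\FL^p$: one must ensure that the tail $\sum_{j\ge J}$ of each series is uniformly small over the whole ball $B_R$ and over all $\uu_m$, so that the $m\to\infty$ limit commutes with the $j$-summation. This is precisely what the factorial-type decay of $C_{0,j}(K)$ and $C_{1,j}(K)$ in \eqref{decay1} buys us, so the argument goes through as soon as Proposition~\ref{PROP:bound} is in hand; there is no genuinely new analytic difficulty beyond the multilinear estimates already proved there.
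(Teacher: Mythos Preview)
Your proposal is correct and follows essentially the same approach as the paper's own argument: apply Proposition~\ref{PROP:bound} to each smooth (hence regular) approximant $\uu_m$ with a common $K=K(R)$, then use the multilinear telescoping estimate $\|\N_0^{(j)}(\uu_m)-\N_0^{(j)}(\uu)\|\lesssim (2j-1)\,C_{0,j}(K)\,M^{2j-2}\|\uu_m-\uu\|$ together with the factorial decay \eqref{decay1} to pass to the limit in $C([-T,T];\FL^p(\T))$. One small bookkeeping point: when you apply \eqref{est-N0}--\eqref{est-R1} at times $t'\in[-T,T]$ you need a uniform bound on $\sup_m\|\uu_m\|_{C_T\FL^p}$ (not just on the initial data), but this is immediate from the assumed convergence $\uu_m\to\uu$ in $C_T\FL^p$.
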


 In view of the estimates \eqref{est-N0}, \eqref{est-N1}, and \eqref{est-R1}, 
we see that the right-hand side of \eqref{NFE2a} is convergent
for $\uu \in C([-T, T]; \FL^p(\T))$.
See also the proof of Theorem \ref{THM:3} below.
By using the multilinearity of the operators, 
we only need to estimate the difference
such as
$\N_0^{(j)}(\uu) - \N_0^{(j)}(\uu_m)$.
Note that such a difference contains $O(j)$-many terms
since 
$|a^{2j-1} - b^{2j-1}| \lesssim     
\big(\sum_{k = 1}^{2j-1} a^{2j-1-k}b^{k-1} \big) |a - b |$
has $O(j)$ many terms.
This, however,  does not cause any issue
thanks to the fast decay~\eqref{decay1} of the coefficients $C_{0, j}$ and  $C_{1,  j}$.
Since the proof 
of Corollary \ref{COR:1} is straightforward computation
with \eqref{est-N0}, \eqref{est-N1}, and \eqref{est-R1}, 
we omit details.

We postpone the proof of Proposition \ref{PROP:bound}
 to Section \ref{SEC:3}.
 In the remaining part of this section, we present the proofs of
Theorems \ref{THM:1}, \ref{THM:2}, and \ref{THM:3},  assuming Proposition \ref{PROP:bound}.
In Subsection \ref{SUBSEC:NLS}, we discuss the case of the 
(unrenormalized) NLS \eqref{NLS1}.

We first present the proof of Theorem \ref{THM:3}.

\begin{proof}[Proof of Theorem \ref{THM:3}.]

Given  $1 \leq p < \infty$, let $\uu_0 \in \FL^p(\T)$.
With $K = K(\|\uu_0\|_{\FL^p}) \geq 1$ (to be chosen later), 
define the map $\G_{\uu_0}$ by 
\begin{align*}
\G_{\uu_0} (\uu)(t)
&  : =   \uu_0  +  \sum_{j = 2}^\infty  \N^{(j)}_0(\uu)(t)
 - \sum_{j = 2}^\infty \N^{(j)}_0(\uu)(0)  \notag \\
& \hphantom{X}
+ \int_0^t \bigg(
\sum_{j = 1}^\infty \N^{(j)}_1(\uu)(t') 
+ \sum_{j = 1}^\infty \RR^{(j)}(\uu)(t') \bigg) dt',
\end{align*}

\noi
where the multilinear terms on the right-hand side 
(depending on the choice of $K\geq 1$) are as in  Proposition \ref{PROP:bound}. 
Let $T>0$.
Then, 
by  Proposition \ref{PROP:bound}, we have
\begin{align*}
\| \G_{\uu_0}(\uu)\|_{C_T\F L^p} \le & \| \uu_0\|_{\F L^p} + \sum_{j=2}^{\infty} C_{0,j}(K) \Big( \| \uu_0\|_{\F L^p}^{2j-1} + \| \uu \|_{C_T \F L^p}^{2j-1} \Big)\\
& + T   \sum_{j=1}^\infty \big( C_{1,j} (K)+ C_{0,j} (K)\big) \| \uu\|_{C_T \F L^p}^{2j+1} , 
\end{align*}

\noi
where $C_T\FL^p = C([-T, T]; \FL^p(\T))$.

Let   $R =1+  \| \uu_0\|_{\F L^p}$.
Then from \eqref{est-N0}, \eqref{est-N1}, and \eqref{est-R1}, 
we have
 \begin{align}
 \label{contra1}
\begin{split}
\| \G_{\uu_0}(\uu) \|_{C_T\F L^p} 
&  \le   R 
 + C \sum_{j=2}^{\infty} 
\frac{K^{4(1-j)} R^{2j-1} }{j!} 
+ C \sum_{j=2}^{\infty} 
\frac{K^{4(1-j)}(2R)^{2j-2}}{j!}  \|\uu\|_{C_T \FL^p} \\
&\hphantom{X}
+ 
C T \bigg\{ \sum_{j=1}^\infty 
\frac{K^{\frac{16}{p'-1} + 4(1-j)} (2R)^{2j}  }{ j!}
+ \sum_{j=1}^\infty  
 \frac{K^{4(1-j)} (2R)^{2j} }{j!} 
 \bigg\} 
\| \uu\|_{C_T \FL^p}\\
\end{split}
\end{align}

\noi 
for any  $\uu\in B_{2R} \subset C ([-T,T]; \F L^p(\T))$.
The series in \eqref{contra1} are obviously convergent
for any $K \geq 1$ thanks to the fast decay in $j$
but by choosing  $K = K(R, p) \gg 1$ sufficiently large, 
we can guarantee that
\[
  C \sum_{j=2}^{\infty} 
\frac{K^{4(1-j)} R^{2j-1} }{j!} \le \frac1{10}
\qquad \text{and}\qquad 
 C \sum_{j=2}^{\infty} 
\frac{K^{4(1-j)}(2R)^{2j-2}}{j!} \le \frac1{10}.
\] 

\noi
Note that the third series in \eqref{contra1} has 
non-negative powers of $K$  for $1\le j < \frac{4}{p'-1}$,
while a power of K does not appear in the fourth series when $j = 1$.
These terms can be controlled by choosing $T = T(K, R) = T(R)>0$
sufficiently small.
As a result, we obtain
\begin{align*}
\| \G_{\uu_0}(\uu) \|_{C_T\F L^p } \le \frac{11}{10} R + \frac15 \| \uu\|_{C_T \F L^p} < 2R
\end{align*}

\noi 
for any  $\uu\in B_{2R} \subset C ([-T,T]; \F L^p(\T))$.
A similar argument also yields the following difference estimate:
\begin{align}
\label{contra3}
\| \G_{\uu_0}(\uu) - \G_{\uu_0}(\vv)  \|_{C_T\F L^p } \le  \frac15 \| \uu - \vv\|_{C_T \F L^p}.
\end{align}

\noi
In establishing the difference estimate \eqref{contra3}, 
we  need to estimate 
the differences
such as
$\N_0^{(j)}(\uu) - \N_0^{(j)}(\vv)$ 
which contains  $O(j)$-many terms as mentioned above.
This  does not cause any issue
thanks to the fast decay~\eqref{decay1} in $j$ of the coefficients $C_{0, j}$ and $C_{1, j}$.

Therefore, by a standard contraction argument
and a continuity argument,\footnote{The contraction argument
yields uniqueness only in $B_{2R} \subset  C([-T,T]; \FL^p (\T))$
and a continuity argument is needed to extend the uniqueness to the entire
 $C([-T,T]; \FL^p (\T))$. This part of the argument is standard and thus we omit detail.
 See for example \cite{CGKO}.}
we conclude that the normal form equation~\eqref{NFE1}
is unconditionally locally well-posed  in $C([-T,T]; \FL^p (\T))$.
Global well-posedness follows
from the a priori bounds  \eqref{bound1} and~\eqref{non5} on the $\FL^p$-norm of smooth solutions
to \eqref{WNLS1} 
 implying
the same bound for smooth solutions to~\eqref{NLS4} and \eqref{NFE1}.

Lastly, by taking the difference of two solutions
$\uu,\vv \in C([-T,T]; \F L^p(\T))$ with different initial data $\uu_0$ and $\vv_0$,  
we have
\begin{align*}
\| \uu - \vv \|_{C_T \FL^p} \le \frac{11}{10}  \| \uu_0 - \vv_0\|_{\F L^p} + \frac15 \| \uu - \vv \|_{C_T \F L^p} ,
\end{align*}

\noi
which implies the Lipschitz bound \eqref{DN3}
for $T = T(\|\uu_0\|_{\FL^p}, \|\vv_0\|_{\FL^p})>0$ sufficiently small. 
By iterating the Lipschitz bound \eqref{DN3} on short intervals
with the global-in-time bounds \eqref{bound1} and \eqref{non5}, 
we conclude that \eqref{DN3}
for any $T > 0$.
\end{proof}

\subsection{Sensible weak solutions: Proof of Theorem \ref{THM:1}}

In the following, we only show global well-posedness of the renormalized cubic NLS 
\eqref{WNLS1} in the sense of sensible weak solutions
according to Definition \ref{DEF:sol}.
As for  well-posedness in the sense of weak solutions in the extended sense
according to Definition \ref{DEF:sol2}, 
one can simply use Proposition \ref{PROP:bound}
and repeat the argument in \cite{GKO}.

Given $u_0 \in \F L^p(\T)$, let  $\{u_{0, m}\}_{m \in \NB}$ 
be a sequence of smooth functions converging to $u_0$ in $\FL^p(\T)$.
Let  $u_m$ be the smooth solution to \eqref{WNLS1} with $u_m|_{t = 0} = u_m$
and set $\uu_m(t) = S(-t) u_m(t)$.
Then, it follows from Proposition \ref{PROP:bound}
that $\uu_m$ is a solution to the normal form equation \eqref{NFE2a}.
From the Lipschitz bound \eqref{DN3}, we have
\begin{align}
\begin{split}
\| u_m- u_n  \|_{C_T \F L^p}
& = 
\| \uu_m- \uu_n  \|_{C_T \F L^p }\\
&   \leq C(T)  \| \uu_m(0) - \uu_n(0) \|_{\FL^p}
  =  C(T)  \| u_m(0) - u_n(0) \|_{\FL^p}
\end{split}
\label{diff2}
\end{align}

\noi
for all $m,n \ge 1$ and any $T>0$.
This shows that 
$\{ u_m\}_{m \in \NB}$ is a Cauchy sequence 
in $C(\R; \FL^p(\T))$ endowed with the compact-open topology (in time)
and hence converges to some $u_\infty$
 in $C(\R; \FL^p(\T))$.

Now, we prove uniqueness of the limit $u_\infty$, independent of smooth approximating solutions.
Given $u_0 \in \F L^p(\T)$, 
let $\{u_m\}_{m\in \NB}$ and $\{v_n\}_{n\in\NB}$ be 
two sequences of smooth  solutions such that $u_m(0), v_n(0) \to u_0$ in $\FL^p(\T)$
 as $m, n \to \infty$.
Then, by the argument above, 
there exist $u_\infty, v_\infty \in C(\R; \FL^p(\T))$
such that 
$u_m \to u_\infty$ and $v_n \to v_\infty$
in $ C(\R; \FL^p(\T))$
as $m,n \to \infty$. 
Then, by the triangle inequality with  \eqref{DN3} and \eqref{diff2}, we obtain
\begin{align*}
\| u_\infty-v_\infty\|_{C_T \F L^p} 
& \le \| u-u_m\|_{C_T \F L^p} + \| u_m - v_n\|_{C_T \F L^p} + \| v_n-v\|_{C_T \F L^p} \\
& \le \| u-u_m\|_{C_T \F L^p} + C \| u_m(0)-v_n (0 ) \|_{\F L^p} + \| v_n-v\|_{C_T \F L^p}\\
& \longrightarrow 0, 
\end{align*}

\noi
 as $m,n \to  \infty$.
 Therefore, we have $u_\infty = v_\infty$.
 
Lastly, combining this convergence with \eqref{WNLS1}, we obtain
\begin{align}
\N(u_m) - \N(u_n)
= - i \dt (u_m - u_n) - \dx^2 (u_m - u_n)
\longrightarrow 0
\label{diff3}
\end{align}

\noi
in the distributional sense as $m, n \to \infty$.
Therefore, we conclude that \eqref{WNLS1} is globally well-posed
in the sense of sensible weak solutions.

\subsection{Unconditional well-posedness
of the renormalized cubic NLS}

We briefly discuss the proof of Theorem \ref{THM:2}
for the renormalized cubic NLS \eqref{WNLS1}.
Given   $u_0 \in \FL^\frac{3}{2}(\T)$, let $u$ and $v$ be two solutions
to \eqref{WNLS1} with $u |_{t = 0} = v |_{t = 0} = u_0$ in $C([-T, T];\FL^\frac{3}{2}(\T))$ for some $T>0$.
By Proposition \ref{PROP:bound}, 
we see that their interaction representations
$\uu(t) = S(-t) u(t)$ and $\vv(t) = S(-t) v(t)$
satisfy the normal form equation \eqref{NFE1}.
Then, from the unconditional uniqueness
for \eqref{NFE1} in $C([-T, T];\FL^\frac{3}{2}(\T))$ (Theorem \ref{THM:3})
and the unitarity of the linear operator in $\FL^p(\T)$, 
we conclude that $u = v$ in $C([-T, T];\FL^\frac{3}{2}(\T))$.
This proves Theorem \ref{THM:2}.

\subsection{On the cubic NLS} 
\label{SUBSEC:NLS}

We conclude this section by discussing the situation
for the cubic NLS \eqref{NLS1}.
By writing 
\begin{align}
\begin{split}
|u|^2 u & = \bigg(  |{u}|^2 - 2 \int_\T \ |{u}|^2 dx\bigg) u  + 2 \bigg( \int_\T \ |{u}|^2 dx\bigg) u \\
& = \sum_{n_2 \ne n_1, n_3} \ft{{u}}(n_1)\cj{\ft u (n_2)}\ft{{u}}(n_3) 
	e^{i(n_1 - n_2 + n_3)x} - 
	\sum_{n\in \Z} |\ft{{u}}(n)|^2 \ft{{u}}(n) e^{inx}\\
& \hphantom{X}	
+ 2\bigg(\int_\T \ |{u}|^2 dx\bigg) \sum_n \ft{u}(n) e^{inx}\\
& = : \I + \II + \III, 
\end{split}
\label{non2}
\end{align}
\noi
we see that the third term $\III$ is the only difference from the case for the renormalized
cubic NLS \eqref{WNLS1}.
By taking an interaction representation, 
we can write \eqref{NLS1} as
\begin{align}
\dt \ft \uu_n 
& =   \N_1(\uu)(n) +   \RR(\uu)(n)  + \RR_2(\uu)(n), 
 \label{NLS7}
 \end{align}

\noi
where $ \RR_2(\uu)(n)$ is given by  
\begin{align*}
 \RR_2(\uu)(n) 
 = 
 2i \bigg(\int_\T \ |\uu|^2 dx\bigg) \ft{\uu}(n).
\end{align*}

\noi
As compared to \eqref{NLS4}, $\RR_2(\uu)$ is the only difference.
Note that this extra term $\RR_2(\uu)$ imposes the restriction $ p \leq 2$.
As in the case of the renormalized NLS \eqref{WNLS1}, 
we prove the following proposition in Section \ref{SEC:3}.

\begin{proposition}
\label{PROP:bound2}
Let $1 \leq p \leq 2$
and $T>0$.
Then, there exist
time-dependent multilinear operators 
$\{\N_0^{(j)}\}_{j=2}^{\infty}$, 
$\{\N_{1}^{(j)}\}_{j=1}^{\infty}$, 
 $\{\RR^{(j)}\}_{j=1}^{\infty}$, and 
  $\{\RR_2^{(j)}\}_{j=1}^{\infty}$
 depending on the parameter 
  $K = K(R) \geq 1$
  such that the interaction representation $\uu(t) = S(-t) u(t)$
  of any regular solution 
 $u \in C([-T,T]; \F L^{\frac32}(\T))$ 
to   \eqref{NLS1} with $u(0) \in B_R\subset \FL^p(\T)$ 
satisfies the following normal form equation:
 \begin{align}
 \begin{split}
\uu(t) - \uu(0) =  &   \sum_{j = 2}^\infty  \N^{(j)}_0(\uu)(t) 
 - \sum_{j = 2}^\infty \N^{(j)}_0(\uu)(0)  \\
& + \int_0^t \bigg\{
\sum_{j = 1}^\infty \N^{(j)}_1(\uu)(t') + \sum_{j = 1}^\infty \RR^{(j)}(\uu)(t')
+ \sum_{j = 1}^\infty \RR_2^{(j)}(\uu)(t') \bigg\} dt'
\end{split}
\label{NFE4}
\end{align}

\noi
in  $ C([-T,T]; \F L^{\frac32}(\T))$. 
Here, $\{\N_0^{(j)}\}_{j=2}^{\infty}$, 
$\{\N_{1}^{(j)}\}_{j=1}^{\infty}$, 
and  $\{\RR^{(j)}\}_{j=1}^{\infty}$ are as in Proposition~\ref{PROP:bound}, satisfying
the bounds \eqref{est-N0}, \eqref{est-N1}, and \eqref{est-R1}, 
while
  $\{\RR_2^{(j)}\}_{j=1}^{\infty}$
 are $(2j+1)$-linear operators \textup{(}depending on $t\in [-T, T]$\textup{)},  
satisfying the following bound:
\begin{align}
\sup_{t\in [-T,T]} \big\| \RR_2^{(j)} (t) (f_1, f_2, \cdots, f_{2j+1}) \big\|_{\F L^p(\T)} 
& \le C_{0,j} \prod_{i=1}^{2j+1} \| f_i \|_{\F L^p (\T)},\label{est-R2}
\end{align}

\noi
for any $f_i\in \F L^p(\T)$, where 
 $C_{0, j} = C_{0, j}(K) > 0$ is as in \eqref{decay1}.
\end{proposition}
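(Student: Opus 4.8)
\textbf{Proof proposal for Proposition~\ref{PROP:bound2}.}
The plan is to run the same infinite iteration of normal form reductions as in the proof of Proposition~\ref{PROP:bound} (carried out in Section~\ref{SEC:3} for the renormalized equation \eqref{NLS4}), but now starting from the system \eqref{NLS7} instead of \eqref{NLS4}. The only new ingredient is the extra resonant term $\RR_2(\uu)(n) = 2i\big(\int_\T |\uu|^2\,dx\big)\ft\uu(n)$, which did not appear before. First I would observe that, since $\RR_2(\uu)$ is purely resonant (it carries the phase factor $e^{i\Phi(\bar n)t}$ with $\Phi(\bar n)=0$, so no time-dependent oscillation), it is never subject to integration by parts in the normal form procedure: each normal form reduction step differentiates in $t$ the resonant and non-resonant pieces produced so far, and $\RR_2$ simply contributes an additional resonant branch at every step. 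Thus, running the iteration on \eqref{NLS7} produces exactly the operators $\{\N_0^{(j)}\}$, $\{\N_1^{(j)}\}$, $\{\RR^{(j)}\}$ from Proposition~\ref{PROP:bound}, together with a new family $\{\RR_2^{(j)}\}_{j\ge1}$ of $(2j+1)$-linear operators, yielding the normal form equation \eqref{NFE4}.

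Next I would set up the bookkeeping for $\{\RR_2^{(j)}\}$. Schematically, $\RR_2^{(1)} = \RR_2$, and $\RR_2^{(j+1)}$ arises by substituting the full right-hand side of \eqref{NLS7} into one of the $\uu$-slots of the boundary/integral terms generated in the previous steps, exactly as for the other operators; the tree structure and the combinatorial count of terms at generation $j$ are the same as in Section~\ref{SEC:3}, giving the $\frac{1}{j!}$-type factor. The key point for the multilinear bound \eqref{est-R2} is that $\RR_2^{(j)}$ inherits the same phase-function denominators $\prod \Phi(\bar n_k)^{-1}$ from the $(j-1)$ non-resonant integrations by parts already performed before the resonant $\RR_2$ factor is attached; hence the dispersive gain is identical to that for $\RR^{(j)}$, and one gets the coefficient $C_{0,j}(K) = C_p\,K^{4(1-j)}/j!$ rather than the worse $C_{1,j}(K)$. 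The one genuinely new estimate is the base trilinear bound for $\RR_2$ itself: we need $\|(\int_\T |f|^2)\,g\|_{\FL^p} \lesssim \|f\|_{\FL^p}^2\|g\|_{\FL^p}$, i.e. $\|f\|_{\FL^2}^2\lesssim\|f\|_{\FL^p}^2$, which is precisely where the restriction $1\le p\le 2$ is forced, since it requires $\FL^p(\T)\hookrightarrow \FL^2(\T)=L^2(\T)$. With $p\le 2$ this embedding holds, and one can carry the $L^2$-norm along with the $\FL^p$-norms through every subsequent substitution (the $\int_\T|\uu|^2$ factor, once created, is a constant in $x$ and simply multiplies whatever multilinear object it lands on), so the whole family $\{\RR_2^{(j)}\}$ obeys \eqref{est-R2}.

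The remaining tasks are routine and mirror Proposition~\ref{PROP:bound}: one checks that, for a regular solution $u\in C([-T,T];\FL^{3/2}(\T))$, all the series in \eqref{NFE4} converge in $C([-T,T];\FL^{3/2}(\T))$ (using \eqref{est-N0}--\eqref{est-R1} together with the new \eqref{est-R2} and the fast decay \eqref{decay1}), that the boundary terms at each finite stage vanish in the limit, and that the identity \eqref{NFE4} is the limit of the finite-stage identities; this is where one uses that $\FL^{3/2}(\T)\hookrightarrow L^3(\T)$ so the original cubic nonlinearity in \eqref{NLS1} makes classical sense and the integration-by-parts manipulations are justified.

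\textbf{Main obstacle.} I expect the only real subtlety — beyond careful bookkeeping — to be propagating the $L^2$-control of the $\int_\T|\uu|^2$ factors simultaneously with the $\FL^p$-control through the infinite iteration, and verifying that introducing this extra resonant branch does not degrade the $K$-power in the coefficient (i.e. that $\RR_2^{(j)}$ genuinely lands in the $C_{0,j}$ class and not the $C_{1,j}$ class). Everything else is a direct transcription of the argument for \eqref{NLS4}, and the constraint $p\le 2$ is exactly what makes the $L^2\hookleftarrow\FL^p$ step available.
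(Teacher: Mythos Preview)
Your proposal is correct and matches the paper's approach in Subsection~\ref{SUBSEC:NLS2}: the paper defines $\RR_2^{(j+1)}$ exactly as the contribution from substituting $\RR_2(\uu)(n_b)$ at one terminal node of $\N_2^{(j)}$, and bounds it via Lemma~\ref{LEM:N^J_0} together with the base estimate $\|\RR_2(\uu)\|_{\FL^p}\le\|\uu\|_{\FL^p}^3$ for $p\le 2$ (Lemma~\ref{LEM:R^J_2}). Your flagged ``main obstacle'' is not a real one: the $\int_\T|\uu|^2$ factor appears exactly once per $\RR_2^{(j)}$ and pulls out as a scalar, so there is no iterated $L^2$-control to propagate, and the $C_{0,j}$ class follows immediately because $\RR_2^{(j)}$ carries the identical denominator structure $\prod_{k=1}^{j-1}\wt\mu_k$ as $\RR^{(j)}$.
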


With Proposition \ref{PROP:bound2}, 
we can proceed as in the proof of Theorem \ref{THM:3}
and  prove the following unconditional well-posedness of the normal form equation
\eqref{NFE4} for the cubic NLS \eqref{NLS1}.

\begin{theorem}\label{THM:4}
Let $1\leq p \leq 2$.
Then, the normal form equation \eqref{NFE4}
is unconditionally globally well-posed in $\FL^p(\T)$.
\end{theorem}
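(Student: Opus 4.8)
The plan is to mirror the proof of Theorem~\ref{THM:3} almost verbatim, now working with the augmented normal form equation \eqref{NFE4}. Given $1\leq p\leq 2$ and $\uu_0\in\FL^p(\T)$, with $K=K(\|\uu_0\|_{\FL^p})\geq 1$ to be chosen, I would set $R=1+\|\uu_0\|_{\FL^p}$ and define the map
\[
\G_{\uu_0}(\uu)(t):=\uu_0+\sum_{j=2}^\infty\N_0^{(j)}(\uu)(t)-\sum_{j=2}^\infty\N_0^{(j)}(\uu)(0)+\int_0^t\bigg(\sum_{j=1}^\infty\N_1^{(j)}(\uu)(t')+\sum_{j=1}^\infty\RR^{(j)}(\uu)(t')+\sum_{j=1}^\infty\RR_2^{(j)}(\uu)(t')\bigg)dt',
\]
with the multilinear operators supplied by Proposition~\ref{PROP:bound2}. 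The only new ingredient compared to Theorem~\ref{THM:3} is the family $\{\RR_2^{(j)}\}_{j=1}^\infty$, but by \eqref{est-R2} these satisfy exactly the same bound as $\{\RR^{(j)}\}_{j=1}^\infty$ in \eqref{est-R1}, with the identical coefficient $C_{0,j}(K)=C_p K^{4(1-j)}/j!$ from \eqref{decay1}. So on $B_{2R}\subset C([-T,T];\FL^p(\T))$ the estimate \eqref{contra1} acquires one extra sum $CT\sum_{j\geq 1}K^{4(1-j)}(2R)^{2j}/j!\,\|\uu\|_{C_T\FL^p}$, which has no positive power of $K$ and is handled just like the fourth series there: choose $K=K(R,p)\gg 1$ so that the two $j\geq 2$ series coming from $\N_0^{(j)}$ are each $\leq\frac1{10}$, then choose $T=T(R)>0$ small enough that all the time-integrated pieces (including the new $\RR_2$ contribution) are together $\leq\frac15\|\uu\|_{C_T\FL^p}$. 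This yields $\|\G_{\uu_0}(\uu)\|_{C_T\FL^p}\leq\frac{11}{10}R+\frac15\|\uu\|_{C_T\FL^p}<2R$ and, by the analogous difference computation using multilinearity and the fast decay \eqref{decay1}, $\|\G_{\uu_0}(\uu)-\G_{\uu_0}(\vv)\|_{C_T\FL^p}\leq\frac15\|\uu-\vv\|_{C_T\FL^p}$, so $\G_{\uu_0}$ is a contraction on $B_{2R}$.

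A standard contraction argument then gives a unique fixed point in $B_{2R}$, and a continuity argument (as in \cite{CGKO}) upgrades this to unconditional uniqueness in the full space $C([-T,T];\FL^p(\T))$; since $T$ depends only on $R=1+\|\uu_0\|_{\FL^p}$, the local solution extends to a global one once we know the $\FL^p$-norm stays bounded. For the unrenormalized cubic NLS \eqref{NLS1} when $1\leq p\leq 2$, the relevant a priori control is supplied by the bound \eqref{bound1} of \cite{OW2} for $2\leq p<\infty$ applied at $p=2$ (that is, the $L^2$-conservation) together with the persistence-of-regularity argument in Appendix~\ref{SEC:A}; this propagates to smooth solutions of \eqref{NLS7} and hence of the normal form equation \eqref{NFE4}, via $u(t)=S(t)\uu(t)$ and the unitarity of $S(t)$ on $\FL^p(\T)$. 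The Lipschitz bound, obtained by taking the difference of two solutions with distinct data, reads $\|\uu-\vv\|_{C_T\FL^p}\leq\frac{11}{10}\|\uu_0-\vv_0\|_{\FL^p}+\frac15\|\uu-\vv\|_{C_T\FL^p}$, and iterating it on short intervals of length $T(R)$ using the global-in-time bound yields the global Lipschitz dependence for any $T>0$.

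The restriction $p\leq 2$ is essential and enters only through Proposition~\ref{PROP:bound2}: the extra resonant term $\RR_2(\uu)(n)=2i\big(\int_\T|\uu|^2\,dx\big)\ft\uu(n)$ contains the factor $\|\uu\|_{L^2}^2=\|\uu\|_{\FL^2}^2$, which can be dominated by $\|\uu\|_{\FL^p}^2$ only when $p\leq 2$; for $p>2$ this term genuinely obstructs closing the estimates in $\FL^p$, consistent with the ill-posedness of \eqref{NLS1} outside $L^2(\T)$ recorded in Proposition~\ref{PROP:NE}. Thus, modulo Proposition~\ref{PROP:bound2}, whose proof is carried out in Section~\ref{SEC:3}, the main work of Theorem~\ref{THM:4} is entirely contained in the verification that the $\RR_2^{(j)}$ terms satisfy \eqref{est-R2} with the same coefficients $C_{0,j}$; once that is granted, the fixed-point and continuation scheme is identical to that of Theorem~\ref{THM:3} and I do not anticipate any additional obstacle.
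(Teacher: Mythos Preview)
Your proposal is correct and follows precisely the approach the paper indicates: the paper does not give a separate proof of Theorem~\ref{THM:4} but simply states that with Proposition~\ref{PROP:bound2} in hand one can proceed exactly as in the proof of Theorem~\ref{THM:3}, which is what you have carried out in detail. Your identification of the global-in-time bound (via $L^2$-conservation and the persistence-of-regularity estimate \eqref{non5} from Appendix~\ref{SEC:A}) and your explanation of the role of the restriction $p\leq 2$ are both consistent with the paper.
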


Then, 
Theorem \ref{THM:1} for $1\leq p \leq 2$
and Theorem \ref{THM:2} for the cubic NLS \eqref{NLS1}
follow from arguments analogous to those presented above.
We omit details.

\section{Normal form reduction: Proof of Proposition \ref{PROP:bound}} 
\label{SEC:3}

In this section, we implement an infinite iteration of normal form reductions
in the Fourier-Lebesgue space $\FL^p(\T)$, $1\leq p < \infty$,
and prove Proposition \ref{PROP:bound}.
The argument is presented in an inductive manner.
More precisely, we start with  the formulation \eqref{NLS4}
and refer to this case as the first step ($J= 1$).
Define
\begin{align}
\N_1 (\uu) : = \sum_{n\in \Z} \N_1(\uu) (n) e^{inx}
\qquad \text{and}\qquad 
\RR (\uu) : = \sum_{n\in \Z}  \RR(\uu) (n) e^{inx},
\label{Def:N}
\end{align}
where $\N_1(\uu) (n)$ and $\RR(\uu) (n)$ are as in \eqref{NLS4}.
In what follows, we view $\N_{1}$ and $\RR$ as trilinear  operators.

For notational convenience, we set $\RR^{(1)} : = \RR$ and $\N^{(1)} : =\N_1$. 
While we keep the resonant part  $\RR^{(1)}$ as it is,
we  divide the non-resonant part $\N^{(1)}$ into  a ``good" part $\N_{1}^{(1)}$
(nearly resonant part) 
and a ``bad" part $\N_2^{(1)}$ (highly non-resonant part),
depending on  the size of the phase function $\Phi (\bar n)$.
On the one hand, 
the restriction on  the phase function $\Phi (\bar n)$
allows us to establish an effective estimate on  the good part $\N_{1}^{(1)}$.
On the other hand, 
the bad part does not allow for any good estimate.
To exploit fast time oscillation, we then apply a normal form reduction to the bad part  $\N_2^{(1)}$
and turn it into the terms
 $\N_0^{(2)}$, $\RR^{(2)}$, and $\N^{(2)}$
 in the second generation ($J = 2$).
We can easily estimate the terms  $\N_0^{(2)}$ and $\RR^{(2)}$.
As in the first step, we  divide $\N^{(2)}$ into 
a good part $\N^{(2)}_1$  and a bad part $\N^{(2)}_2$, 
 where the threshold is now given by the phase function for the quintilinear term $\N^{(2)}$.
While the good part $\N_{1}^{(2)}$ allows for an effective quintilinear estimate, 
we apply a normal form reduction to the bad part  $\N_2^{(2)}$
and turn it into three  terms
 $\N_0^{(3)}$, $\RR^{(3)}$, and $\N^{(3)}$
 in the third generation $(J = 3)$.
We proceed in an inductive manner.

After applying normal form reductions $J-1$ times, 
we arrive at the three terms
 $\N_0^{(J)}$, $\RR^{(J)}$, and $\N^{(J)}$.
 The main difficulty appears in the last term $\N^{(J)}$.
As in the previous steps, 
we divide $\N^{(J)}$ into a good part $ \N^{(J)}_1$ (with an effective $(2J+1)$-linear estimate) 
and a bad part  $\N^{(J)}_2$.
We then apply a normal form reduction to the bad part  $\N^{(J)}_2$
and iterate this procedure indefinitely.
Under some regularity assumption, 
we show that the error term
  $\N^{(J)}_2$ tends to 0 
  as $J \to \infty$.

In order to carry out the strategy described above, 
we need to address the following four issues:

\begin{itemize}

\item
How do we separate $\N^{(J)}$ into ``good" and ``bad" parts?

\item
How do we estimate these good terms in the $\FL^p(\T)$ when $p\gg 1$?
As we see below, 
 $\N_0^{(J)}$ is $(2J-1)$-linear, while $\RR^{(J)}$ and $\N^{(J)}$
 are $(2J+1)$-linear.

\item
Under what condition, 
does  the remainder term $\N^{(J)}_2$ tends to 0 as $J \to \infty$, 
and if so, in which sense?

\item
We need to show convergence of the series representation
\eqref{NFE2a}. 
\end{itemize}

\noi
We address these issues  in the remaining part of this section.
In the following, we fix $1 \leq p < \infty$.
The major part of this section is devoted to 
studying the renormalized cubic NLS \eqref{WNLS1}.
As for the (unrenormalized) cubic NLS \eqref{NLS1}, 
see Subsection \ref{SUBSEC:NLS2}.

\subsection{Base case: $J = 1$}
\label{SUBSEC:first}

Define the trilinear operators $\N^{(1)}$ and $\RR^{(1)}$
by 
\begin{align}
\begin{split}
\N^{(1)}(\uu_1, \uu_2, \uu_3)
& = 
i \sum_{n\in \Z} e^{inx}\sum_{\substack{n = n_1 - n_2 + n_3\\ n_2\ne n_1, n_3} }
e^{ i \Phi(\bar{n})t } 
\ft \uu_1({n_1}) \cj{\ft \uu_2(n_2)}\ft \uu_3(n_3), \\
\RR^{(1)}(\uu_1, \uu_2, \uu_3)
& = -   i \sum_{n \in \Z} e^{inx} \ft \uu_1(n) \cj{\ft \uu_2(n)} \ft \uu_3 (n),  \\
\end{split}
\label{Def:N2}
 \end{align}

\noi
where $\Phi(\bar n)$ is as in \eqref{Phi}.
For notational simplicity, we set
$\N^{(1)}(\uu) = \N^{(1)}(\uu, \uu, \uu)$, etc. 
when all the three arguments coincide.
Note that this notation is consistent with \eqref{NLS4} and \eqref{Def:N}.
Then, we can write \eqref{NLS4} as
\begin{align}
\dt \uu = \N^{(1)}(\uu)+ \RR^{(1)}(\uu).
\label{XNLS1}
\end{align}

The resonant part satisfies the following trivial estimate.

\begin{lemma}\label{LEM:R1}
Let  $1\le p \le \infty$. Then,  we have
\begin{align}
\| \RR^{(1)}(\uu_1, \uu_2, \uu_3)\|_{\F L^p} 
\leq \prod_{i = 1}^3 \|\uu_i\|_{\F L^p}.
\label{R1_1}
\end{align}
\end{lemma}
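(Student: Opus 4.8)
The statement to prove is Lemma~\ref{LEM:R1}, the bound $\| \RR^{(1)}(\uu_1, \uu_2, \uu_3)\|_{\F L^p} \leq \prod_{i = 1}^3 \|\uu_i\|_{\F L^p}$ for the resonant trilinear operator defined in \eqref{Def:N2}.

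The plan is to argue directly on the Fourier side, since $\RR^{(1)}$ is diagonal in frequency. From \eqref{Def:N2}, the $n$-th Fourier coefficient of $\RR^{(1)}(\uu_1, \uu_2, \uu_3)$ is simply $-i\,\ft\uu_1(n)\,\cj{\ft\uu_2(n)}\,\ft\uu_3(n)$, with no summation involved; the resonance condition $\Phi(\bar n) = 0$ together with $n = n_1 - n_2 + n_3$ forces $n_1 = n_2 = n_3 = n$ (up to the excluded diagonal cases, which is precisely why the surviving term is the fully diagonal one). So the key observation is that $\RR^{(1)}$ acts entrywise on Fourier coefficients.

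First I would write, for each $n\in\Z$,
\begin{align*}
\big|\widehat{\RR^{(1)}(\uu_1,\uu_2,\uu_3)}(n)\big| = |\ft\uu_1(n)|\,|\ft\uu_2(n)|\,|\ft\uu_3(n)|.
\end{align*}
Then I would take $\l^p_n$ norms of both sides. For $p < \infty$, raising to the $p$-th power and summing gives $\sum_n |\ft\uu_1(n)|^p|\ft\uu_2(n)|^p|\ft\uu_3(n)|^p$, and since each factor $|\ft\uu_i(n)|^p \le \|\uu_i\|_{\F L^p}^p$ is bounded by its supremum over $n$ — indeed $|\ft\uu_i(n)| \le \|\uu_i\|_{\F L^p}$ because a single term of a nonnegative $\ell^p$ sum is dominated by the whole $\ell^p$ norm — one can pull out two of the three factors as suprema and keep the third summed. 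Concretely, $\sum_n |\ft\uu_1(n)|^p|\ft\uu_2(n)|^p|\ft\uu_3(n)|^p \le \big(\sup_n |\ft\uu_2(n)|\big)^p \big(\sup_n |\ft\uu_3(n)|\big)^p \sum_n |\ft\uu_1(n)|^p \le \|\uu_2\|_{\F L^p}^p \|\uu_3\|_{\F L^p}^p \|\uu_1\|_{\F L^p}^p$. Taking $p$-th roots yields \eqref{R1_1}. The case $p = \infty$ is even simpler, as $\ell^\infty$ is multiplicative. Since this is an entirely routine computation with no genuine obstacle — the only "idea" being the pointwise bound $\|\cdot\|_{\ell^\infty} \le \|\cdot\|_{\ell^p}$ on $\Z$ — there is no hard part to flag; the lemma is called "trivial" in the text for exactly this reason.
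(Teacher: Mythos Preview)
Your proof is correct and essentially the same as the paper's one-line argument, which simply invokes the embedding $\ell^p_n \subset \ell^{3p}_n$ (implicitly via H\"older with exponents $(3p,3p,3p)$); your version uses the slightly cruder $\ell^p_n \subset \ell^\infty_n$ to pull out two factors as suprema, which is the same idea. One small aside: your parenthetical explanation of why $\RR^{(1)}$ is diagonal is not quite how it arises in the paper --- it comes from the subtracted term in the renormalization \eqref{non1}, not from solving $\Phi(\bar n)=0$ --- but this does not affect the proof, since you correctly read off the Fourier coefficient from the definition \eqref{Def:N2}.
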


\begin{proof}
This is clear from $\ell^p_n \subset \ell^{3p}_n$.
\end{proof}

\begin{remark}\rm
(i) 
In the following, we establish various multilinear estimates.
To simply notations, 
we only state and prove estimates when all arguments agree
with the understanding that they can be easily extended 
to multilinear estimates.
Under this convention,~\eqref{R1_1} is written as
\begin{align*}
\| \RR^{(1)}(\uu)\|_{\F L^p} 
\leq  \|\uu\|_{\F L^p}^3
\end{align*}

\noi
We also use  $\ft \uu_n = \ft \uu_n(t)$ to denote
$\ft{\uu}(n, t)$.
Moreover, given a multilinear operator $\M$, 
we simply use $\M(\uu)(n)$ to denote the Fourier coefficients
of $\M(\uu)$.

\smallskip

\noi
(ii) 
The multilinear operators that appear below
are non-autonomous, i.e.~they depend on a parameter $t \in \R$.
They, however, satisfy estimates uniformly in time
and hence we simply suppress their time dependence.
See \eqref{N11_1} for example.
\end{remark}

\medskip

Next, we consider the non-resonant part $\N^{(1)}$ in \eqref{Def:N2}.
As it is, we can not establish an effective estimate
and hence we divide it into two parts.
Given $K \geq 1$ (to be chosen later)
and $1\leq p < \infty$, 
let $\eps = \eps(p) >0$ be a small positive number  such that 
\begin{align}
p'-1-\eps >0. 
\label{eps0}
\end{align}

\noi
In the following, we simply set 
\begin{align}
\label{eps}
\eps = \frac{p'-1}2 > 0
\end{align}

\noi
such that \eqref{eps0} is satisfied.
Furthermore, we set
\begin{align}
\ta = \frac{4p'}{p'-1- \eps} > 0.
\label{eps2}
\end{align}

We write $\N^{(1)}$ in \eqref{Def:N2} as 
\begin{equation} \label{N1}
\N^{(1)} = \N_{1}^{(1)} + \N_{2}^{(1)},
\end{equation}

\noi
where $\N_{1}^{(1)}$ is the restriction of $\N^{(1)}$
onto $A_1$ (on the Fourier side), where $A_1 = \bigcup_n A_1(n)$ with\footnote{Clearly, the number $3^\ta$ in \eqref{A1} 
does not make any difference at this point.
However, we insert it to match with \eqref{Aj}.
See also \eqref{A2}.}
\begin{align}\label{A1}
A_1(n):= \big\{ (n, n_1, n_2, n_3): & \ n = n_1 - n_2 + n_3, \ n_1, n_3 \ne n, \notag\\
& |\Phi(\bar{n})| = |2(n - n_1) (n - n_3)| \leq (3K)^{\ta}   \big\}
\end{align}

\noi
and $\N_{2}^{(1)} := \N^{(1)} - \N_{1}^{(1)}$. 
Then, the ``good'' part $\N_{1}^{(1)}$ satisfies
the following trilinear estimate.

\begin{lemma}\label{LEM:N11}
Let $\N_{1}^{(1)}$ be as in \eqref{N1}.
Then, we have
\begin{align} \label{N11_1}
\| \N_{1}^{(1)}(\uu)\|_{\F L^p} 
& \lesssim K^{\frac{2\ta}{p'}}  \|\uu\|_{\F L^p}^3,
\end{align}

\noi
where $\ta$ is as in \eqref{eps2}.
\end{lemma}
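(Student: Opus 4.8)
\textbf{Proof proposal for Lemma~\ref{LEM:N11}.}
The plan is to estimate the $\FL^p$-norm of the trilinear term $\N_1^{(1)}(\uu)$ directly from its Fourier representation, using the restriction $|\Phi(\bar n)| \le (3K)^\ta$ imposed by the set $A_1(n)$ to control the size of the sum over admissible frequency triples. Writing $\ft\uu_n = \ft\uu(n)$ and $a_n := |\ft\uu(n)| \ge 0$, the absolute value of the $n$-th Fourier coefficient of $\N_1^{(1)}(\uu)$ is at most
\[
b_n := \sum_{\substack{n = n_1 - n_2 + n_3\\ n_1, n_3 \ne n\\ |2(n-n_1)(n-n_3)| \le (3K)^\ta}} a_{n_1} a_{n_2} a_{n_3},
\]
and we must bound $\|b_n\|_{\l^p_n}$ by $K^{2\ta/p'}\|a_n\|_{\l^p_n}^3$.

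First I would split the frequency sum according to the size of $|n - n_1|$ (equivalently $|n-n_3|$, since $n_2 = n_1 + n_3 - n$). For each value $d := n - n_1 \ne 0$, the constraint $|2 d (n - n_3)| \le (3K)^\ta$ forces $n - n_3$ to lie in an interval of length $O(K^\ta/|d|)$, hence $n_3$ — and then $n_2 = n_3 - d$ — ranges over at most $O(1 + K^\ta/|d|)$ values. Summing over the dyadic range $|d| \le (3K)^\ta$ (larger $|d|$ contributes nothing since $|n-n_3| \ge 1$), the total number of admissible pairs $(d, n-n_3)$ is comparable to $K^\ta \log K$ or, doing the count more carefully with the two-sided constraint $|d|\,|n-n_3| \le (3K)^\ta$, is of size $O(K^\ta)$ up to a logarithmic factor. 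The key point is that the effective number of free frequency parameters contributing to a fixed output frequency $n$ is controlled polynomially in $K$. I expect the precise exponent $2\ta/p'$ to emerge from interpolating: $b_n$ is an $\ell^1$-type sum over the admissible set, and one distributes the three factors $a_{n_1}, a_{n_2}, a_{n_3}$ via Hölder in the counting measure on the admissible set, paying $(\#\text{admissible set})^{1/p'}$ and keeping $\l^p$ norms of the $a$'s.

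More concretely, the cleanest route is: for fixed $n$, apply Hölder's inequality in the $(n_1, n_3)$-sum (with $n_2$ determined) to get
\[
b_n \le \Big( \#\{(n_1,n_3) \in A_1(n)\} \Big)^{1/p'} \Big( \sum_{(n_1,n_3)\in A_1(n)} (a_{n_1}a_{n_2}a_{n_3})^p \Big)^{1/p},
\]
then use $\#\{(n_1,n_3)\in A_1(n)\} \lesssim K^{\frac{2\ta}{p'}\cdot p'} = K^{2\ta}$ — this is the divisor-type bound I sketched, where the extra logarithm is absorbed because $\ta$ has room to spare — giving $b_n \lesssim K^{2\ta/p'} \big( \sum a_{n_1}^p a_{n_2}^p a_{n_3}^p \big)^{1/p}$. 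Finally, taking $\l^p_n$ and applying Young's (or Minkowski's) inequality for the convolution-type structure $n = n_1 - n_2 + n_3$ over the $p$-th powers — i.e. $\|a^p * a^p * a^p\|_{\l^1}^{1/p} \le \|a^p\|_{\l^1}^{3/p} = \|a\|_{\l^p}^3$ — closes the estimate. The uniformity in $t$ is immediate since the phase $e^{i\Phi(\bar n)t}$ has modulus one and does not affect any of these bounds.

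The main obstacle I anticipate is the lattice-point count $\#\{(n_1,n_3) : |2(n-n_1)(n-n_3)| \le (3K)^\ta\}$: naively this is $\sum_{1 \le |d| \le (3K)^\ta} (K^\ta/|d|) \sim K^\ta \log K$, which is slightly worse than the claimed $K^{2\ta}$ only if one is careless about which exponent one needs — in fact $K^\ta \log K \le K^{2\ta}$ comfortably for $K \ge 1$, so the logarithm is harmless. The subtler point is making sure the Hölder split is arranged so that the counting exponent is exactly $1/p'$ of the set size and the remaining $\l^p$ factors reassemble into $\|\uu\|_{\FL^p}^3$ via the convolution inequality; getting this bookkeeping right, rather than any deep estimate, is where the care is needed. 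The role of the specific choices \eqref{eps}--\eqref{eps2} of $\eps$ and $\ta$ will only become essential later (in summing the series over generations $j$), so for this lemma any fixed $\ta > 0$ works and the statement is as claimed.
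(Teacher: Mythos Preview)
Your proposal is correct and follows essentially the same structure as the paper's proof: apply H\"older in the inner frequency sum to separate a counting factor $(\#A_1(n))^{1/p'}$ from an $\ell^p$ factor, bound the count by $K^{2\ta}$ uniformly in $n$, then sum the $p$th powers over $n$ using the change of variables $n_2 = n_1 + n_3 - n$ to recover $\|\uu\|_{\FL^p}^3$.

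The one minor difference is in the counting step. You use the elementary hyperbola count $\#\{(d,e)\in(\Z\setminus\{0\})^2 : |de|\le M\} \sim M\log M$ and then absorb the logarithm into the generous bound $K^{2\ta}$. The paper instead organizes the sum by the value $\mu = \Phi(\bar n)$ and invokes the divisor estimate \eqref{divisor} to bound the number of factorizations of each $\mu$, then sums over $|\mu|\le (3K)^\ta$; this also gives $O(K^{\ta(1+0+)}) \le K^{2\ta}$. Your direct count is arguably simpler here and avoids the divisor estimate, but note that the paper's route---fix the phase value, then count via divisors---is the template it reuses in the higher-generation Lemmas~\ref{LEM:N^J_0} and~\ref{LEM:N^J_1}, where the constraint is on $\wt\mu_j = \mu_1+\cdots+\mu_j$ rather than on a single product $|d\cdot e|$, and there the direct hyperbola count is no longer available. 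So your argument is perfectly fine for this lemma, but the paper's slightly heavier tool is chosen with the induction in mind.
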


As in the $p = 2$ case studied in \cite{GKO}, 
the following divisor estimate \cite{HW} plays an important role in the following.
Given an integer $n$, let $d(n)$ denote the number of divisors of $m$.
Then, we have
\begin{equation} \label{divisor}
d(n) \les e^{c\frac{\log n}{\log\log n} }
(= o(n^\delta) \text{ for any }\dl>0).
\end{equation}

\begin{remark}\rm
With  \eqref{eps} and \eqref{eps2}, we have
\[
K^{\frac{2\ta}{p'}}  = K^{\frac{16}{p'-1}}, 
\]

\noi
which appears in  \eqref{decay1}  of Proposition \ref{PROP:bound}.
\end{remark}

\begin{proof}

Fix $n, \mu \in \Z$ with $|\mu| \leq (3K)^{\ta}$.
Then, it follows from the divisor estimate~\eqref{divisor}
that 
there are at most $(3K)^{0+}$ many choices for $n_1$ and $n_3$ (and hence for $n_2$ from $n = n_1 - n_2 + n_3$)
satisfying
\begin{equation}
\label{muu}
\mu = 2(n - n_1) (n - n_3).
\end{equation}

\noi
Hence, we have
\[\sup_n \Bigg( \sum_{|\mu|\leq (3K)^{\ta} }  
\sum_{\substack{n = n_1 - n_2 + n_3\\ n_2\ne n_1, n_3\\\mu = \Phi(\bar{n})} }  1 \Bigg) 
\les \sum_{|\mu|\leq (3K)^{\ta} } (3K)^{0+} \les (3K)^{2\ta} .
\]

\noi
Then, by H\"older's inequality, we have 
\begin{align*}
\|\N_{1}^{(1)} (\uu)\|_{\F L^p} 
& = \Bigg(\sum_n \bigg|
\sum_{|\mu|\leq (3K)^{\ta}  } 
\sum_{\substack{n = n_1 - n_2 + n_3\\ n_2\ne n_1, n_3\\\mu = \Phi(\bar{n})} }
\ft \uu_{n_1} \cj{\ft \uu}_{n_2}\ft \uu_{n_3}
\bigg|^p\Bigg)^\frac{1}{p} \\
& \leq \Bigg\{ \sum_n \bigg( 
\sum_{|\mu|\leq (3K)^{\ta} }  \sum_{\substack{n = n_1 - n_2 + n_3\\ n_2\ne n_1, n_3\\\mu = \Phi(\bar{n})} }  1\bigg)^{\frac{p}{p'}}
\bigg(\sum_{n_1, n_3 \in \Z} |\ft \uu_{n_1}|^p 
|\ft \uu_{n_1 + n_3-n}|^p|\ft \uu_{n_3}|^p\bigg)
\Bigg\}^\frac{1}{p}\\
& \lesssim  K^{\frac{2\ta}{p'} } \|\uu\|_{\F L^p}^3.
\end{align*}

\noi
This proves \eqref{N11_1}.
\end{proof}

\medskip
Now, we apply a normal form reduction
to the remaining highly non-resonant part $\N_{2}^{(1)}$.
More precisely,  we differentiate $\N_{2}^{(1)}$ by parts 
(i.e.~the product rule on differentiation in a reversed order)
and write
\begin{align}
 \N_{2}^{(1)}(\uu) (n)  & =  
   \sum_{ A_1(n)^c} 
\dt\bigg( \frac{e^{ i \Phi(\bar{n})t } }{\Phi(\bar{n})}\bigg)
\ft \uu_{n_1} \cj{\ft \uu}_{n_2}\ft \uu_{n_3} \notag \\
 & = 
  \sum_{ A_1(n)^c} 
 \dt \bigg[
\frac{e^{ i \Phi(\bar{n})t } }{\Phi(\bar{n})}
\ft \uu_{n_1} \cj{\ft \uu}_{n_2}\ft \uu_{n_3}\bigg]  
-  \sum_{A_1(n)^c} 
\frac{e^{ i \Phi(\bar{n})t } }{\Phi(\bar{n})}
\dt\big( \ft \uu_{n_1} \cj{\ft \uu}_{n_2}\ft \uu_{n_3}\big) \notag \\
 & = 
 \dt \bigg[
 \sum_{ A_1(n)^c} 
\frac{e^{ i \Phi(\bar{n})t } }{\Phi(\bar{n})}
\ft \uu_{n_1} \cj{\ft \uu}_{n_2}\ft \uu_{n_3}\bigg]  
-  \sum_{A_1(n)^c} 
\frac{e^{ i \Phi(\bar{n})t } }{\Phi(\bar{n})}
\dt\big( \ft \uu_{n_1} \cj{\ft \uu}_{n_2}\ft \uu_{n_3}\big) \notag \\
& =: \dt \N_{0}^{(2)} (\uu) (n) + \wt \N^{(2)} (\uu) (n). \label{N12}
\end{align}

The boundary term  $\N_0^{(2)}$ can be estimated in a straightforward manner.
Using the equation \eqref{NLS4}, 
we can express  $\wt \N^{(2)}(\uu)(n)$
as a quintilinear form:
\begin{align}
\begin{split}
\wt \N^{(2)} (\uu) (n)
& = -  \sum_{A_1(n)^c} 
\frac{e^{ i \Phi(\bar{n})t } }{\Phi(\bar{n})}
 \Big\{  \RR(\uu)(n_1)
\cj{\ft \uu}_{n_2}\ft \uu_{n_3}
\\
& \hphantom{XXXXXXXXX}
+ 
\ft \uu_{n_1}
\cj{  \RR(\uu)(n_2)}\ft \uu_{n_3}
 + 
\ft \uu_{n_1}
\cj{\ft \uu}_{n_2}   \RR(\uu)(n_3)
\Big\}\\
& = -  \sum_{A_1(n)^c} 
\frac{e^{ i \Phi(\bar{n})t } }{\Phi(\bar{n})}
 \Big\{ \N_1(\uu) (n_1)
\cj{\ft \uu}_{n_2}\ft \uu_{n_3}\\
& \hphantom{XXXXXXXXX}
+ 
\ft \uu_{n_1}
\cj{\N_1(\uu) (n_2)}\ft \uu_{n_3}
 + 
\ft \uu_{n_1}
\cj{\ft \uu}_{n_2}\N_1(\uu)(n_3)
\Big\}\\
& = : \RR^{(2)}(\uu)(n)
+ \N^{(2)}(\uu)(n).
\end{split}
\label{N2}
\end{align}

\noi
In view of \eqref{Def:N2}, 
we regard
$ \RR^{(2)}(\uu)(n)$
and 
$\N^{(2)}(\uu)(n)$
on the right-hand side  as quintilinear forms.
As in the first step, we will need to divide $\N^{(2)}$
into good and bad parts
and apply another normal form reduction to the bad part.
Before proceeding further, 
we first recall the notion of ordered trees introduced in \cite{GKO}.
This allows us to express multilinear terms in a concise manner.

\begin{remark}\rm
\label{exchange}
We formally exchanged the order of the sum and the time differentiation
in the first term
at the third equality.
This can be easily justified in the distributional sense (see Lemma 5.1 in \cite{GKO})
and also in the classical sense if 
$\uu \in C([-T, T]; \FL^\frac{3}{2}(\T))\subset C([-T, T]; L^3(\T))$.
See \cite{GKO}.

\end{remark}

\subsection{Notations: index by trees}
\label{notation}
In this subsection, we recall the notion of ordered trees and relevant  definitions
from \cite{GKO}.

\begin{definition} \label{DEF:tree1} \rm
\textup{(i)} 
Given a partially ordered set $\TT$ with partial order $\leq$, 
we say that $b \in \TT$ 
with $b \leq a$ and $b \ne a$
is a child of $a \in \TT$,
if  $b\leq c \leq a$ implies
either $c = a$ or $c = b$.
If the latter condition holds, we also say that $a$ is the parent of $b$.

\smallskip
\noi
\textup{(ii)}
A tree $\TT$ is a finite partially ordered set satisfying
the following properties.
\begin{itemize}
\item Let $a_1, a_2, a_3, a_4 \in \TT$.
If $a_4 \leq a_2 \leq a_1$ and  
$a_4 \leq a_3 \leq a_1$, then we have $a_2\leq a_3$ or $a_3 \leq a_2$,

\item
A node $a\in \TT$ is called terminal, if it has no child.
A non-terminal node $a\in \TT$ is a node 
with  exactly three children denoted by $a_1, a_2$, and $a_3$,

\item There exists a maximal element $r \in \TT$ (called the root node) such that $a \leq r$ for all $a \in \TT$.
We assume that the root node is non-terminal,

\item $\TT$ consists of the disjoint union of $\TT^0$ and $\TT^\infty$,
where $\TT^0$ and $\TT^\infty$
denote  the collections of non-terminal nodes and terminal nodes, respectively.
\end{itemize}

\noi
The number $|\TT|$ of nodes in a tree $\TT$ is $3j+1$ for some $j \in \NB$,
where $|\TT^0| = j$ and $|\TT^\infty| = 2j + 1$.
Let us denote  the collection of trees in the $j$th generation
by $T(j)$:
\begin{equation*}
T(j) := \{ \TT : \TT \text{ is a tree with } |\TT| = 3j+1 \}.
\end{equation*}

\noi
Note that $\TT \in T(j)$ contains  $j$ parental nodes.

\smallskip

\noi
(iii) (ordered tree) We say that a sequence $\{ \TT_j\}_{j = 1}^J$ is a chronicle of $J$ generations, 
if 
\begin{itemize}
\item $\TT_j \in {T}(j)$ for each $j = 1, \dots, J$,
\item  $\TT_{j+1}$ is obtained by changing one of the terminal
nodes in $\TT_j$ into a non-terminal node (with three children), $j = 1, \dots, J - 1$.
\end{itemize}

\noi
Given a chronicle $\{ \TT_j\}_{j = 1}^J$ of $J$ generations,  
we refer to $\TT_J$ as an {\it ordered tree} of the $J$th generation.
We denote the collection of the ordered trees of the $J$th generation
by $\mathfrak{T}(J)$.
Note that the cardinality of $\mathfrak{T}(J)$ is given by 
\begin{equation} \label{cj1}
 |\mathfrak{T}(J)| = 1\cdot3 \cdot 5 \cdot \cdots \cdot (2J-1)
 = (2J-1)!! =: c_J.
 \end{equation}

\end{definition}

The notion of ordered trees comes with associated chronicles;
it encodes not only the shape of a tree
but also how it ``grew''.
This property will be convenient in encoding 
successive applications
of the product rule for differentiation.
In the following, we simply refer to an ordered tree $\TT_J$ of the $J$th generation
but 
it is understood that there is an underlying chronicle $\{ \TT_j\}_{j = 1}^J$.

Given a tree $\TT$, we associate each terminal node 
$a\in \TT^\infty$ with the Fourier coefficient (or its complex conjugate) of the interaction representation $\uu$ and sum over all possible frequency assignments. 
In order to do this, we introduce the index function $\n$ assigning frequencies 
to all the nodes in $\TT$ in a consistent manner.

\begin{definition}[index function] \label{DEF:tree4} \rm
Given an ordered tree $\TT$ (of the $J$th generation for some $J \in \NB$), 
we define an index function ${\bf n}: \TT \to \Z$ such that,
\begin{itemize}
\item[(i)] $n_a = n_{a_1} - n_{a_2} + n_{a_3}$ for $a \in \TT^0$,
where $a_1, a_2$, and $a_3$ denote the children of $a$,
\item[(ii)] $\{n_a, n_{a_2}\} \cap \{n_{a_1}, n_{a_3}\} = \emptyset$ for $a \in \TT^0$,

\item[(iii)] $|\mu_1| := |2(n_r - n_{r_1})(n_r - n_{r_3})| >(3K)^\ta$,\footnote{Recall that we are on $A_1(n)^c$. See \eqref{A1}.}
 where $r$ is the root node,

\end{itemize}

\noi
where  we identified ${\bf n}: \TT \to \Z$ 
with $\{n_a \}_{a\in \TT} \in \Z^\TT$. 
We use 
$\mathfrak{N}(\TT) \subset \Z^\TT$ to denote the collection of such index functions ${\bf n}$.

\end{definition}

\begin{remark} \label{REM:terminal}
\rm Note that ${\bf n} = \{n_a\}_{a\in\TT}$ is completely determined
once we specify the values $n_a$ for $a \in \TT^\infty$.
\end{remark}

\medskip

Given an ordered tree 
$\TT_J$ of the $J$th generation with the chronicle $\{ \TT_j\}_{j = 1}^J$ 
and associated index functions ${\bf n} \in \mathfrak{N}(\TT_J)$,
 we use superscripts to 
  denote  ``generations'' of frequencies.

Fix ${\bf n} \in \mathfrak{N}(\TT_J)$.
Consider $\TT_1$ of the first generation.
Its nodes consist of the root node $r$
and its children $r_1, r_2, $ and $r_3$. 
We define the first generation of frequencies by
\[\big(n^{(1)}, n^{(1)}_1, n^{(1)}_2, n^{(1)}_3\big) :=(n_r, n_{r_1}, n_{r_2}, n_{r_3}).\]

\noi
 The ordered tree $\TT_2$ of the second generation is obtained from $\TT_1$ by
changing one of its terminal nodes $a = r_k \in \TT^\infty_1$ for some $k \in \{1, 2, 3\}$
into a non-terminal node.
Then, we define
the second generation of frequencies by
\[\big(n^{(2)}, n^{(2)}_1, n^{(2)}_2, n^{(2)}_3\big) :=(n_a, n_{a_1}, n_{a_2}, n_{a_3}).\]

\noi
Note that  we have $n^{(2)} = n_k^{(1)} = n_{r_k}$ for some $k \in \{1, 2, 3\}$.
As we see later, this corresponds to introducing a new set of frequencies
after the first differentiation by parts.

After  $j - 1$ steps, the ordered tree $\TT_j$ 
of the $j$th generation is obtained from $\TT_{j-1}$ by
changing one of its terminal nodes $a  \in \TT^\infty_{j-1}$
into a non-terminal node.
Then, we define
the $j$th generation of frequencies by
\[\big(n^{(j)}, n^{(j)}_1, n^{(j)}_2, n^{(j)}_3\big) :=(n_a, n_{a_1}, n_{a_2}, n_{a_3}).\]

\noi
Note that these frequencies
satisfies (i) and (ii) 
in Definition \ref{DEF:tree4}.

Lastly, we use $\mu_j$  to denote the corresponding phase factor introduced at the $j$th generation.
Namely, we have
\begin{align}
\mu_j & = \mu_j \big(n^{(j)}, n^{(j)}_1, n^{(j)}_2, n^{(j)}_3\big)
:= \big(n^{(j)}\big)^2 - \big(n_1^{(j)}\big)^2 + \big(n_2^{(j)}\big)^2- \big(n_3^{(j)}\big)^2 \notag \\
& = 2\big(n_2^{(j)} - n_1^{(j)}\big) \big(n_2^{(j)} - n_3^{(j)}\big)
= 2\big(n^{(j)} - n_1^{(j)}\big) \big(n^{(j)} - n_3^{(j)}\big), \label{mu}
\end{align}

\noi
where the last two equalities hold thanks to (i) in Definition \ref{DEF:tree4}.

\begin{remark}\rm
For simplicity of notation, 
we may drop the minus signs, the complex number $i$, 
and the complex conjugate sign in the following
when they do not play an important role.
\end{remark}


\subsection{Second generation: $J=2$}
\label{SUBSEC:second}

With the ordered tree notion introduced in the previous subsection, we now rewrite \eqref{N2} as
\begin{align}
\label{N2-R}
\wt \N^{(2)} (\uu) (n)
 & = 
 \sum_{\TT_{1} \in \mathfrak{T}(1)}
\sum_{b \in\TT^\infty_1} 
\sum_{\substack{{\bf n} \in \mathfrak{N}(\TT_1)\\{\bf n}_r = n} }
\ind_{A_1(n)^c}\frac{e^{ i {\mu}_1 t } }{{\mu}_1}
\, \RR^{(1)} (\uu) (n_b)
\prod_{a \in \TT^\infty_1 \setminus \{b\}} \ft \uu_{n_{a}}  \notag \\
& \hphantom{X} 
+
\sum_{\TT_{2} \in \mathfrak{T}(2)}\sum_{\substack{{\bf n} \in \mathfrak{N}(\TT_{2})\\{\bf n}_r = n}} 
\ind_{A_1(n)^c}
\frac{e^{ i (\mu_{1} + \mu_2)t } }{{\mu}_1}
\,\prod_{a \in \TT^\infty_{2}} \ft \uu_{n_{a}} \notag\\
& =: \RR^{(2)} (\uu) (n) +  \N^{(2)} (\uu)(n).
\end{align}

\noi
In the first equality, 
we used \eqref{NLS4} and replace $\dt \ft \uu_{n_b}$ by $\RR^{(1)} (\uu) (n_b)$ and $\N^{(1)} (\uu) (n_b)$.
Strictly speaking, the new phase factor may be $\mu_1 - \mu_2$ when the time derivative falls on the complex conjugate.
However, for our analysis, it makes no difference and hence we simply write it as $\mu_1 + \mu_2$.
We apply the same convention for subsequent steps.

Putting \eqref{N12} and \eqref{N2-R} together, we have
\[
\N^{(1)}_2(\uu)(n) = \dt \N_{0}^{(2)} (\uu) (n) + \RR^{(2)} (\uu) (n) +  \N^{(2)} (\uu)(n).
\]

\noi
The boundary term $\N_0^{(2)} (\uu)$ and the ``resonant'' term $\RR^{(2)}$ 
can be bounded in a straightforward manner.

\begin{lemma}
\label{LEM:N_0^2R^2}
Let $1 \leq p < \infty$. Then, we have
\begin{align*}
\| \N_0^{(2)} (\uu) \|_{\F L^p} & \les K^{-4} \| \uu \|_{\F L^p}^3, \\
\| \RR^{(2)} (\uu) \|_{\F L^p} & \les K^{-4} \| \uu \|_{\F L^p}^5.
\end{align*}
\end{lemma}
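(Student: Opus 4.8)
The plan is to estimate both $\N_0^{(2)}$ and $\RR^{(2)}$ directly from their definitions in \eqref{N12} and \eqref{N2-R}, using the gain coming from the restriction to $A_1(n)^c$, i.e.\ from the lower bound $|\Phi(\bar n)| = |\mu_1| > (3K)^\ta$. Recall from \eqref{N12} that
\[
\N_0^{(2)}(\uu)(n) = \sum_{A_1(n)^c} \frac{e^{i\Phi(\bar n)t}}{\Phi(\bar n)}\, \ft\uu_{n_1}\cj{\ft\uu}_{n_2}\ft\uu_{n_3},
\]
so on $A_1(n)^c$ we have the pointwise bound $|\Phi(\bar n)|^{-1} \le (3K)^{-\ta}$. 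The key point is that $\ta$ was chosen in \eqref{eps2} so that $\ta > 4p'$, which after paying $1/p'$ of the $|\mu_1|^{-1}$-decay in the divisor counting step (exactly as in Lemma \ref{LEM:N11}) still leaves enough room to produce the factor $K^{-4}$.

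First I would reduce, as in the proof of Lemma \ref{LEM:N11}, to a counting estimate: for fixed $n$ and fixed $\mu_1 = \Phi(\bar n)$ with $|\mu_1| > (3K)^\ta$, the divisor estimate \eqref{divisor} bounds the number of $(n_1,n_3)$ (hence $n_2$) solving $\mu_1 = 2(n-n_1)(n-n_3)$ by $|\mu_1|^{0+}$. Then, writing $\frac{1}{\Phi(\bar n)} = |\mu_1|^{-\frac{1}{p'}}\cdot |\mu_1|^{-\frac{1}{p}}$, I would apply H\"older's inequality in the $(\mu_1; n_1, n_3)$ sum: the $|\mu_1|^{-1/p}$ factor together with the $|\mu_1|^{0+}$ divisor count is summed in $\ell^{p'}$ over $|\mu_1| > (3K)^\ta$, which converges (since $p>1$, wait — for $p=1$, $p'=\infty$ and one argues slightly differently, taking the $\ell^\infty$ in $\mu_1$ directly, which is immediate), and yields a factor $\big(\sum_{|\mu_1|>(3K)^\ta} |\mu_1|^{-1+}\big)^{1/p'} \lesssim K^{-\frac{\ta}{2p'}} \le K^{-\frac{\ta}{2p'}}$; combined with the uniform factor $|\mu_1|^{-1/p} \le (3K)^{-\ta/p}$ pulled out of the remaining $\ell^p$-part, the total power of $K$ is $-\ta(\frac{1}{p} + \frac{1}{2p'}) \le -\frac{\ta}{2}$. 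Since $\ta \ge 8$ by \eqref{eps2} (as $\frac{4p'}{p'-1-\eps} = \frac{8p'}{p'-1} \ge 8$), this is $\le K^{-4}$, and the remaining $\ell^p$-sum over $n_1, n_3$ is controlled by $\|\uu\|_{\FL^p}^3$ exactly as in Lemma \ref{LEM:N11}. This gives the bound on $\N_0^{(2)}$.

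For $\RR^{(2)}$, I would use its expression from \eqref{N2-R} (equivalently the first braced term in \eqref{N2}): it has the same kernel $\ind_{A_1(n)^c} e^{i\mu_1 t}/\mu_1$ but with one of the three factors replaced by $\RR^{(1)}(\uu)(n_b) = -i\,\ft\uu_{n_b}\cj{\ft\uu}_{n_b}\ft\uu_{n_b}$, turning it into a quintilinear expression. The frequency $n_b$ is one of $n_1, n_2, n_3$, and the two extra frequencies introduced are \emph{equal} to $n_b$, so they add no new free summation variable; hence the counting/H\"older structure is identical to the $\N_0^{(2)}$ case, and the only change is that the final $\ell^p$-sum is now over a product of five Fourier coefficients at three free frequencies, bounded using $\ell^p \subset \ell^{5p}$ (as in Lemma \ref{LEM:R1}) by $\|\uu\|_{\FL^p}^5$. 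This produces the stated $K^{-4}\|\uu\|_{\FL^p}^5$. The main obstacle — really the only delicate point — is bookkeeping the split of the $|\mu_1|^{-1}$ weight between the divisor-summation step (needs $\ell^{p'}$-summability, forcing the $1/p'$ portion there) and the uniform extraction of the residual $K$-power, and checking the endpoint $p=1$ separately; everything else is a routine repetition of the argument in Lemma \ref{LEM:N11}.
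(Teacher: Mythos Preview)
Your overall strategy---H\"older in the frequency variables, the divisor counting bound, and the gain from the restriction $|\mu_1|>(3K)^\ta$---is exactly the paper's approach (the paper simply refers to the general Lemmas~\ref{LEM:N^J_0} and~\ref{LEM:R^J} with $j=2$). However, there is a genuine error in your implementation for $\N_0^{(2)}$.

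The problem is the splitting $|\mu_1|^{-1}=|\mu_1|^{-1/p'}|\mu_1|^{-1/p}$. As you describe it, the resulting $\ell^{p'}$-factor becomes
\[
\bigg(\sum_{|\mu_1|>(3K)^\ta} |\mu_1|^{-1+}\bigg)^{1/p'},
\]
which you claim converges; but $\sum_{|\mu_1|>M}|\mu_1|^{-1+\delta}$ \emph{diverges} for every $\delta\ge 0$, so the bound you wrote is false. More generally, any version of this split that leaves only a power $|\mu_1|^{-\alpha}$ with $\alpha\le 1$ on the $\ell^{p'}$ side (after incorporating the $|\mu_1|^{0+}$ divisor count) will fail, and for $p\ge 2$---the main case of interest---your scheme does not produce a convergent sum.

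The fix is simply not to split: put the \emph{entire} weight $|\mu_1|^{-1}$ on the $\ell^{p'}$ side of H\"older. Then
\[
|\N_0^{(2)}(\uu)(n)|
\le
\bigg(\sum_{A_1(n)^c}\frac{1}{|\mu_1|^{p'}}\bigg)^{1/p'}
\bigg(\sum_{n_1,n_3}|\ft\uu_{n_1}\ft\uu_{n_2}\ft\uu_{n_3}|^p\bigg)^{1/p},
\]
and by the divisor estimate the first factor is bounded by
\[
\bigg(\sum_{|\mu_1|>(3K)^\ta}|\mu_1|^{-p'+\eps}\bigg)^{1/p'}
\lesssim (3K)^{-\ta(p'-1-\eps)/p'} = (3K)^{-4},
\]
which converges since $p'-\eps>1$ by the choice \eqref{eps}, and gives precisely $K^{-4}$ by the definition \eqref{eps2} of $\ta$. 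The $p=1$ endpoint is the trivial $\ell^\infty$ bound $|\mu_1|^{-1}\le(3K)^{-\ta}$, as you noted.

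Your treatment of $\RR^{(2)}$---same kernel, one factor replaced by $\RR^{(1)}(\uu)(n_b)$ with no new free frequency---is correct and matches the paper's argument in Lemma~\ref{LEM:R^J}.
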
 

\noi
For the proof of Lemma \ref{LEM:N_0^2R^2},
see Lemma \ref{LEM:N^J_0} and \ref{LEM:R^J} with $J=2$.

With $\ta > 0$ as in \eqref{eps2}, 
 we decompose the frequency space\footnote{If we fix $\TT_2 \in  \mathfrak{T}(2)$, 
 then 
  the frequency space of $\N^{(2)}$
 for this fixed $\TT_2$ in \eqref{N2-R} 
 is given by 
 \[\{(n_a, a \in \TT^\infty_2): \n = \{n_a\}_{a\in \TT_2} \in \mathfrak{N}(\TT_2)\}.\]

 \noi
 In view of Remark \ref{REM:terminal}, 
 we can then  identify
 the frequency space of $\N^{(2)}$ for this fixed $\TT_2$ with  $\mathfrak{N}(\TT_{2})$.}
 of $\N^{(2)}$ 
for  fixed $\TT_2 \in  \mathfrak{T}(2)$
 into 
\begin{align}
\label{A2}
A_2 : = \big\{ {\bf n} \in \mathfrak{N}(\TT_{2}): |\mu_1 + \mu_2| \le (5K)^\ta\big\},
\end{align}
and its complement $A_2^c$.
Then we decompose $\N^{(2)}$ as 
\begin{align}
\label{N21}
\N^{(2)} = \N^{(2)}_1 + \N^{(2)}_2, 
\end{align}
where $\N^{(2)}_1 : = \N^{(2)} |_{A_2}$ is defined as the restriction of $\N^{(2)}$ on $A_2$
and $\N^{(2)}_2 : = \N^{(2)} - \N^{(2)}_1$.
Thanks to the restriction \eqref{A2} on the frequencies, we can estimate the first term $\N^{(2)}_1$.

\begin{lemma}
\label{LEM:N_1^2}
Let $1 \leq p < \infty$. Then, we have
\begin{align*}
\| \N_1^{(2)} (\uu) \|_{\F L^p} & \les K^{\frac{2\ta}{p'} -4} \| \uu \|_{\F L^p}^5.
\end{align*}
\end{lemma}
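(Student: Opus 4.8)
The plan is to follow the proof of Lemma~\ref{LEM:N11}, the key new point being that the oscillatory factor $\tfrac1{\mu_1}$ coming from the first normal form reduction must be \emph{kept inside the sum and summed}, rather than merely bounded by $(3K)^{-\ta}$. The reason is that the set $A_2$ in \eqref{A2} only localizes $\mu_1+\mu_2$, and not $\mu_1$ by itself, so for a fixed value of $\n_r = n$ the collection of admissible index functions underlying $\N_1^{(2)}$ is infinite; it is precisely the decay of $|\mu_1|^{-1}$ that makes the sum converge. Since $|\mathfrak{T}(2)| = 3$ by \eqref{cj1}, it suffices to bound, for each fixed ordered tree $\TT_2 \in \mathfrak{T}(2)$ and each $n \in \Z$, the quantity
\[
\bigg| \sum_{\substack{\n \in \mathfrak{N}(\TT_2),\ \n_r = n \\ |\mu_1 + \mu_2| \le (5K)^\ta}} \frac{1}{\mu_1} \prod_{a \in \TT_2^\infty} \ft\uu_{n_a} \bigg|,
\]
recalling that $|\mu_1| > (3K)^\ta$ is built into the definition of $\mathfrak{N}(\TT_2)$ through condition (iii) of Definition~\ref{DEF:tree4}.

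First I would apply H\"older's inequality in the $\n$-sum with exponents $p'$ and $p$, placing the weight $|\mu_1|^{-1}$ in the $\ell^{p'}_\n$ factor. The $\ell^p_\n$ factor, once raised to the $p$-th power and summed over $n \in \Z$, is bounded by $\|\uu\|_{\FL^p}^{5p}$: after summing over $n$ the constraint $\n_r = n$ disappears, and dropping conditions (i)--(iii) of Definition~\ref{DEF:tree4} leaves the $2\cdot 2 + 1 = 5$ terminal frequencies completely free (recall Remark~\ref{REM:terminal}). The crux is then the weighted counting estimate, \emph{uniform in} $n$,
\[
\sum_{\substack{\n \in \mathfrak{N}(\TT_2),\ \n_r = n \\ |\mu_1 + \mu_2| \le (5K)^\ta}} \frac{1}{|\mu_1|^{p'}} \ \lesssim\ K^{\ta(2-p') + 0+}.
\]

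To establish it I would parametrize by the pair of phases $(\mu_1,\mu_2)$ from \eqref{mu}. Fixing $n = \n_r$ and $\mu_1$, the relation $\mu_1 = 2(n - n^{(1)}_1)(n - n^{(1)}_3)$ together with the divisor estimate \eqref{divisor} leaves $\lesssim |\mu_1|^{0+}$ choices for the first-generation frequencies (with $n^{(1)}_2$ then determined by condition (i)); fixing in addition $\mu_2$, the relation $\mu_2 = 2(n^{(2)} - n^{(2)}_1)(n^{(2)} - n^{(2)}_3)$ leaves $\lesssim |\mu_2|^{0+}$ choices for the remaining frequencies, so each pair $(\mu_1,\mu_2)$ accounts for $\lesssim (|\mu_1||\mu_2|)^{0+}$ index functions. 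Here one uses condition (ii) of Definition~\ref{DEF:tree4}, which gives $n^{(2)} \ne n^{(2)}_1, n^{(2)}_3$ and hence $\mu_2 \ne 0$, so that the divisor estimate is applicable. Writing $\mu = \mu_1 + \mu_2$ with $|\mu| \le (5K)^\ta$ and using $|\mu_2| \le |\mu_1| + (5K)^\ta$, the left-hand side above is then $\lesssim K^\ta \sum_{|\mu_1| > (3K)^\ta} |\mu_1|^{-p' + 0+} \lesssim K^\ta \cdot K^{-\ta(p'-1) + 0+} = K^{\ta(2-p') + 0+}$, where I used $p' > 1$ (valid since $p < \infty$) to sum the tail; the case $p = 1$ is trivial since $\FL^1$ is multiplicative.

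Combining the two H\"older factors and taking the $p$-th root gives $\|\N_1^{(2)}(\uu)\|_{\FL^p} \lesssim K^{\frac{\ta(2-p')}{p'} + 0+}\|\uu\|_{\FL^p}^5 = K^{\frac{2\ta}{p'} - \ta + 0+}\|\uu\|_{\FL^p}^5$, and since $\ta = \tfrac{8p'}{p'-1} \ge 8$ by \eqref{eps2} we may absorb the $0+$ loss and bound $K^{-\ta + 0+} \le K^{-4}$, which yields the claim. The main obstacle is the weighted counting estimate above: because $A_2$ does not bound $\mu_1$ from above, there is no Lemma~\ref{LEM:N11}-style finite count available, and one must instead combine the summability of $|\mu_1|^{-p'}$ over $|\mu_1| > (3K)^\ta$ with a two-fold application of the divisor estimate that is legitimate only after ruling out $\mu_2 = 0$ via the tree admissibility conditions.
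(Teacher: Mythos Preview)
Your proof is correct and follows essentially the same approach as the paper, which defers to Lemma~\ref{LEM:N^J_1} with $j=2$: apply H\"older in the $\n$-sum with exponents $p,p'$, bound the $\ell^p$ factor by $\|\uu\|_{\FL^p}^{5}$, and control the weighted counting sum $\sum_{\n}|\mu_1|^{-p'}$ by parametrizing through $(\mu_1,\mu_2)$ and invoking the divisor estimate twice. The only cosmetic difference is that the paper fixes $\eps=(p'-1)/2$ in the divisor loss so that $\ta(p'-1-\eps)=4p'$ exactly, yielding the exponent $K^{\frac{2\ta}{p'}-4}$ directly, whereas you carry a $0+$ loss and then use $\ta\ge 8$ to absorb it; both are equivalent.
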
 

\noi
For the proof of Lemma \ref{LEM:N_1^2}, see Lemma \ref{LEM:N^J_1} with $J=2$.

As we do not have  a good control on the operator $\N^{(2)}_2$, 
we apply another normal form reduction to $\N^{(2)}_2$.
On the support of $\N^{(2)}_2$, we have
\begin{align}
\label{A1A2}
|\mu_1| > (3K)^\ta \qquad \text{and} \qquad 
 |\mu_1 + \mu_2| > (5K)^\ta .
\end{align}

\noi
By applying differentiation by parts once again, we have
\begin{align}
\label{N22}
\N^{(2)}_2 (\uu) (n) 
& = \dt \bigg[ 
\sum_{\TT_2 \in \mathfrak{T}(2)}\sum_{\substack{{\bf n} \in \mathfrak{N}(\TT_2)\\{\bf n}_r = n}} 
\ind_{\bigcap_{j = 1}^2 A_j^c}
\frac{e^{ i (\mu_1 + \mu_2) t } }{\mu_1(\mu_1 + \mu_2) }
\, \prod_{a \in \TT^\infty_2} \ft \uu_{n_{a}}
\bigg]\notag \\
& \hphantom{X}  +
\sum_{\TT_2 \in \mathfrak{T}(2)}\sum_{\substack{{\bf n} \in \mathfrak{N}(\TT_2)\\{\bf n}_r = n}} 
\ind_{\bigcap_{j = 1}^2 A_j^c}
\frac{e^{ i (\mu_1 + \mu_2) t } }{\mu_1(\mu_1 + \mu_2) }
\, \dt\bigg( \prod_{a \in \TT^\infty_2} \ft \uu_{n_{a}} \bigg)
 \notag \\
& = \dt \bigg[ 
\sum_{\TT_2 \in \mathfrak{T}(2)}\sum_{\substack{{\bf n} \in \mathfrak{N}(\TT_2)\\{\bf n}_r = n}} 
\ind_{\bigcap_{j = 1}^2 A_j^c}
\frac{e^{ i (\mu_1 + \mu_2) t } }{\mu_1(\mu_1 + \mu_2) }
\, \prod_{a \in \TT^\infty_2} \ft  \uu_{n_{a}}
\bigg]\notag \\
& \hphantom{X} 
+
 \sum_{\TT_{2} \in \mathfrak{T}(2)}
\sum_{b \in\TT^\infty_2} 
\sum_{\substack{{\bf n} \in \mathfrak{N}(\TT_2)\\{\bf n}_r = n} }
\ind_{\bigcap_{j = 1}^2 A_j^c}
\frac{e^{ i (\mu_1 + \mu_2)  t } }{{\mu}_1(\mu_1 + \mu_2) }
\, \RR^{(1)} (\uu) ({n_b})
\prod_{a \in \TT^\infty_{J-1} \setminus \{b\}}\ft  \uu_{n_{a}}  \notag \\
& \hphantom{X} 
+
\sum_{\TT_{3} \in \mathfrak{T}(3)}\sum_{\substack{{\bf n} \in \mathfrak{N}(\TT_{3})\\{\bf n}_r = n}} 
\ind_{\bigcap_{j = 1}^2 A_j^c}
\frac{e^{ i (\mu_1 + \mu_2 + \mu_3) t } }{{\mu}_1(\mu_1 + \mu_2)}
\,\prod_{a \in \TT^\infty_{3}}\ft  \uu_{n_{a}} \notag\\
& =: \dt \N^{(3)}_0 (\uu) (n) + \RR^{(3)} (\uu) (n) + \N^{(3)} (\uu),
\end{align}
where the summations are restricted to  \eqref{A1A2}.
As for the last term $\N^{(3)} (\uu)$, we need to decompose it 
into $\N^{(3)}_1 (\uu)$ and $\N^{(3)}_2 (\uu)$,
according to the further restriction
\begin{align}
\label{A3}
A_3 : = \big\{ {\bf n} \in \mathfrak{N}(\TT_{3}): |\mu_1 + \mu_2 + \mu_3| \le (7K)^\ta\big\}.
\end{align}

\noi
On the one hand, the modulation restrictions \eqref{A1}, \eqref{A2}, and \eqref{A3} allow us to estimate  
operators $\N^{(3)}_0$, $\RR^{(3)}$, and $\N^{(3)}_1$;
see   Lemmas \ref{LEM:N_0^2R^2} and \ref{LEM:N_1^2} below.
On the other hand, we apply another normal form reduction to $\N^{(3)}_2$.
In this way, we iterate  normal form reductions in an indefinite manner.

\subsection{General step: $J$th generation}
\label{SUBSEC:J}

In this subsection, 
we consider the general $J$th step of normal form reductions.
Before doing so, let us first go over the first two steps
studied in Subsections \ref{SUBSEC:first} and \ref{SUBSEC:second}.
Write  \eqref{XNLS1}  as
\[
\dt \uu =  \RR^{(1)} (\uu) + \N^{(1)}_1(\uu) + \N^{(1)}_2(\uu).
\]

\noi
The first two terms on the right-hand side admit good estimates; see Lemmas \ref{LEM:R1} and \ref{LEM:N11}.
We then applied the first step of normal form reductions
to the troublesome term $\N^{(1)}_2(\uu)$ and obtained
\begin{align*}
\dt \uu & = 
 \dt \N^{(2)}_0 (\uu) + \sum_{j = 1}^2 \RR^{(j)} (\uu) + \sum_{j = 1}^2 \N^{(j)}_1(\uu) + \N^{(2)}_2(\uu).
\end{align*}

\noi
See \eqref{N12}, \eqref{N2-R}, and \eqref{N21}.
Note that only the last term $\N^{(2)}_2(\uu)$ can not be estimated in a direct manner.
By applying a normal form reduction once again, 
we obtained 
\begin{align}
\dt \uu & = 
 \sum_{j = 2}^3 \dt \N^{(j)}_0 (\uu) + \sum_{j = 1}^3 \RR^{(j)} (\uu) + \sum_{j = 1}^3 \N^{(j)}_1(\uu) + \N^{(3)}_2(\uu).
\label{X3}
\end{align}

\noi
See \eqref{N22}.
Once again, all the terms in \eqref{X3}, except for the last term 
 $\N^{(3)}_2(\uu)$,  admit good estimates; see Lemmas \ref{LEM:N^J_0}, \ref{LEM:R^J}, and \ref{LEM:N^J_1} below. 
We then apply the third step of normal form reductions 
to  $\N^{(3)}_2(\uu)$.
We can formally iterate this process. 
In particularly, after applying normal form reductions $J-1$ times, 
we would arrive at 
\begin{align}
\dt \uu & = 
 \sum_{j = 2}^{J} \dt \N^{(j)}_0 (\uu) + \sum_{j = 1}^{J} \RR^{(j)} (\uu) + \sum_{j = 1}^{J} \N^{(j)}_1(\uu) 
 + \N^{(J)}_2(\uu).
\label{XJ-1}
\end{align}

In the following, we define each term on the right-hand side of \eqref{XJ-1} properly.
With $\mu_j$ as in \eqref{mu}, 
define
$\wt{\mu}_{j}$  by 
\begin{align*}
 \wt{\mu}_{j} := \sum_{k = 1}^{j} \mu_k.
\end{align*}

\noi
We then set
\begin{equation} \label{Aj}
A_{j} := \big\{ |\wt{\mu}_{j}| \le ((2j+1)K)^{\ta} \big\}, 
\end{equation}

\noi
where $\theta > 0$ is as in \eqref{eps2}.
Given $j \in \NB$, 
we define 
$\N^{(j)}_2 (\uu) (n)$
by 
\begin{align}
\N^{(j)}_2 (\uu) (n)
 =
\sum_{\TT_{j} \in \mathfrak{T}(j)}\sum_{\substack{{\bf n} \in \mathfrak{N}(\TT_{j})\\{\bf n}_r = n}} 
\ind_{\bigcap_{k = 1}^{j} A_k^c}
\frac{e^{ i \wt{\mu}_{j}t } }{\prod_{k = 1}^{j-1} \wt{\mu}_k}
\,\prod_{a \in \TT^\infty_{j}}  \ft \uu_{n_{a}}.
\label{N2j}
\end{align}

\noi
Note that this definition is consistent with 
$\N_2^{(1)}$, $\N_2^{(2)}$, and $\N_2^{(3)}$ that we saw in the previous subsections.
By applying a normal form reduction to \eqref{N2j}
with \eqref{XNLS1}, 
we obtain
\begin{align} 
\N^{(j)}_2 (\uu) (n)
& = \dt \bigg[
\sum_{\TT_j \in \mathfrak{T}(j)}\sum_{\substack{{\bf n} \in \mathfrak{N}(\TT_{j})\\{\bf n}_r = n}} 
\ind_{\bigcap_{k = 1}^{j} A_k^c}
\frac{e^{ i \wt{\mu}_{j}t } }{\prod_{k = 1}^{j} \wt{\mu}_k}
\, \prod_{a \in \TT^\infty_{j}} \ft \uu_{n_{a}}
\bigg]\notag \\
& \hphantom{X} 
+
 \sum_{\TT_{j} \in \mathfrak{T}(j)}
\sum_{\substack{{\bf n} \in \mathfrak{N}(\TT_{j})\\{\bf n}_r = n} }
\sum_{b \in\TT^\infty_{j}} 
\ind_{\bigcap_{k = 1}^{j} A_k^c}
\frac{e^{ i \wt{\mu}_{j}t } }{\prod_{k = 1}^{j} \wt{\mu}_k}
\, \RR^{(1)}(\uu)({n_b})
\prod_{a \in \TT^\infty_{j} \setminus \{b\}}  \ft \uu_{n_{a}}  \notag \\
& \hphantom{X} 
+
 \sum_{\TT_{j} \in \mathfrak{T}(j)}
\sum_{\substack{{\bf n} \in \mathfrak{N}(\TT_{j})\\{\bf n}_r = n} }
\sum_{b \in\TT^\infty_{j}} 
\ind_{\bigcap_{k = 1}^{j} A_k^c}
\frac{e^{ i \wt{\mu}_{j}t } }{\prod_{k = 1}^{j} \wt{\mu}_k}
\, \N^{(1)}(\uu)({n_b})
\prod_{a \in \TT^\infty_{j} \setminus \{b\}}  \ft \uu_{n_{a}}  \notag \\
& = \dt \bigg[
\sum_{\TT_j \in \mathfrak{T}(j)}\sum_{\substack{{\bf n} \in \mathfrak{N}(\TT_{j})\\{\bf n}_r = n}} 
\ind_{\bigcap_{k = 1}^{j} A_k^c}
\frac{e^{ i \wt{\mu}_{j}t } }{\prod_{k = 1}^{j} \wt{\mu}_k}
\, \prod_{a \in \TT^\infty_{j}} \ft \uu_{n_{a}}
\bigg]\notag \\
& \hphantom{X} 
+
 \sum_{\TT_{j} \in \mathfrak{T}(j)}
\sum_{\substack{{\bf n} \in \mathfrak{N}(\TT_{j})\\{\bf n}_r = n} }
\sum_{b \in\TT^\infty_{j}} 
\ind_{\bigcap_{k = 1}^{j} A_k^c}
\frac{e^{ i \wt{\mu}_{j}t } }{\prod_{k = 1}^{j} \wt{\mu}_k}
\, \RR^{(1)}(\uu)({n_b})
\prod_{a \in \TT^\infty_{j} \setminus \{b\}}  \ft \uu_{n_{a}}  \notag \\
& \hphantom{X} 
+
\sum_{\TT_{j+1} \in \mathfrak{T}(j+1)}\sum_{\substack{{\bf n} \in \mathfrak{N}(\TT_{j+1})\\{\bf n}_r = n}} 
\ind_{\bigcap_{k = 1}^{j} A_k^c}
\frac{e^{ i \wt{\mu}_{j+1}t } }{\prod_{k = 1}^{j} \wt{\mu}_k}
\,\prod_{a \in \TT^\infty_{j+1}}  \ft  \uu_{n_{a}} \notag\\
& =: \dt \N^{(j+1)}_0 (\uu) (n)+ \RR^{(j+1)} (\uu) (n) + \N^{(j+1)} (\uu) (n).
\label{N2jj}
\end{align}

\noi
Here, we formally exchanged the order of the sum and the time differentiation, which can be justified. 
See Remark~\ref{exchange}.
As in Subsections \ref{SUBSEC:first} and \ref{SUBSEC:second}, 
we divide $\N^{(j+1)} $ into 
\begin{equation}
\N^{(j+1)} = \N^{(j+1)}_1  + \N^{(j+1)}_2 ,
\label{P0}
\end{equation}

\noi
where $\N^{(j+1)}_1 (\uu) $ is the restriction of $\N^{(j+1)} (\uu)$
onto $A_{j+1}$
and 
$\N^{(j+1)}_2 (\uu):= \N^{(j+1)} (\uu) - \N^{(j+1)}_1 (\uu) $.
This allows us to define all the terms appearing in \eqref{XJ-1}
in an inductive manner
by applying a normal form reduction to 
$\N^{(j+1)}_2$.

In the remaining part of this subsection, we estimate
the multilinear operators
$\N^{(j)}_0$, $\RR^{(j)}$, and $\N_1^{(j)}$.

\begin{lemma}\label{LEM:N^J_0}
Let $1 \leq p < \infty$.
Then, there exists $C_p > 0$ such that 
\begin{align} 
\| \N^{(j)}_0(\uu)\|_{ \F L^{p}} & \le 
C_p \frac{K^{4(1-j)}}{((2j-1)!!)^2 }
 \|\uu\|_{\F L^{p}}^{2j-1}
  \label{N^J_0-1}
\end{align}

\noi
for any integer $j \geq 2$ and $K \geq 1$.
\end{lemma}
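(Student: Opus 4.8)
The plan is to read off $\N_0^{(j)}$ as the boundary term produced at the $(j-1)$-st normal form reduction: by \eqref{N2jj} (with $j$ there replaced by $j-1$),
\[
\N_0^{(j)}(\uu)(n) = \sum_{\TT_{j-1}\in\mathfrak{T}(j-1)}\ \sum_{\substack{{\bf n}\in\mathfrak{N}(\TT_{j-1})\\ {\bf n}_r=n}}\ind_{\bigcap_{k=1}^{j-1}A_k^c}\,\frac{e^{i\wt{\mu}_{j-1}t}}{\prod_{k=1}^{j-1}\wt{\mu}_k}\prod_{a\in\TT^\infty_{j-1}}\ft\uu_{n_a}.
\]
Since $|e^{i\wt{\mu}_{j-1}t}|=1$ we may discard the phase, and since $|\mathfrak{T}(j-1)|=(2j-3)!!$ by \eqref{cj1}, after the triangle inequality in $n$ (and in the finite sum over trees) it suffices to prove, uniformly over $\TT_{j-1}$, the per-tree bound
\[
\Big\|\sum_{\substack{{\bf n}\in\mathfrak{N}(\TT_{j-1})\\ {\bf n}_r=n}}\ind_{\bigcap_{k=1}^{j-1}A_k^c}\,\frac{1}{\prod_{k=1}^{j-1}|\wt{\mu}_k|}\prod_a|\ft\uu_{n_a}|\Big\|_{\ell^p_n}\le C_p^{j}\prod_{k=1}^{j-1}\big((2k+1)K\big)^{-4}\,\|\uu\|_{\FL^p}^{2j-1}.
\]
Granting this, since $\prod_{k=1}^{j-1}(2k+1)=(2j-1)!!$ the product contributes $((2j-1)!!)^{-4}K^{4(1-j)}$; multiplying by the number of trees $(2j-3)!!\le(2j-1)!!$ still leaves $((2j-1)!!)^{-3}$, which is super-exponentially small and hence absorbs $C_p^{j}$ with a $j$-independent constant, giving \eqref{N^J_0-1}. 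The full multilinear estimate \eqref{est-N0} then follows because $\N_0^{(j)}$ is $(2j-1)$-linear and every step below treats the $2j-1$ terminal factors separately.

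To prove the per-tree bound, fix $\TT_{j-1}$. An index function ${\bf n}\in\mathfrak{N}(\TT_{j-1})$ with ${\bf n}_r=n$ is parametrized by $2(j-1)$ free frequency variables, two introduced at each generation $k=1,\dots,j-1$; it is convenient to take, at generation $k$, the pair $a_k:=n^{(k)}-n_1^{(k)}$ and $b_k:=n^{(k)}-n_3^{(k)}$ (so $a_k,b_k\in\Z\setminus\{0\}$, which also encodes the non-resonance conditions (ii) of Definition~\ref{DEF:tree4}), with $n^{(k)}$ a terminal frequency of an earlier generation and, by \eqref{mu} and the definition of $\wt{\mu}_k$, $\wt{\mu}_k=\wt{\mu}_{k-1}+2a_kb_k$. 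First I would apply H\"older's inequality in these $2(j-1)$ variables with exponents $p'$ and $p$, placing $\prod_a|\ft\uu_{n_a}|$ in the $\ell^p$ factor. There the constraint ${\bf n}_r=n$ pins down one terminal frequency, which I bound by $\|\ft\uu\|_{\ell^\infty_n}=\|\uu\|_{\FL^\infty}\le\|\uu\|_{\FL^p}$ (valid for all $1\le p<\infty$); summing the remaining $2(j-1)$ terminal frequencies freely gives $\|\uu\|_{\FL^p}^{2(j-1)}$, so the $\ell^p$ factor is $\le\|\uu\|_{\FL^p}^{2j-1}$, uniformly in $n$. It then remains to prove the ``counting'' estimate
\[
\sup_{n\in\Z}\ \sum_{\substack{{\bf n}\in\mathfrak{N}(\TT_{j-1})\\ {\bf n}_r=n}}\ind_{\bigcap_{k=1}^{j-1}A_k^c}\,\frac{1}{\prod_{k=1}^{j-1}|\wt{\mu}_k|^{p'}}\ \le\ C_p^{j}\prod_{k=1}^{j-1}\big((2k+1)K\big)^{-4p'}.
\]

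This counting estimate is the main obstacle, and I would carry it out as in the case $p=2$ of \cite{GKO}, summing the pairs $(a_k,b_k)$ one generation at a time, from $k=j-1$ down to $k=1$. When $(a_k,b_k)$ is summed, everything from earlier generations is fixed, and since $\wt{\mu}_k=\wt{\mu}_{k-1}+2a_kb_k$, for each value of $\wt{\mu}_k$ there are, by the divisor estimate \eqref{divisor}, at most $C_{\eps'}|\wt{\mu}_k-\wt{\mu}_{k-1}|^{\eps'}$ admissible pairs, where I fix $\eps':=\eps/2$ so that $p'-2\eps'=\tfrac{p'+1}{2}>1$. Splitting $|\wt{\mu}_k-\wt{\mu}_{k-1}|^{\eps'}\le|\wt{\mu}_k|^{\eps'}+|\wt{\mu}_{k-1}|^{\eps'}$ and using $|\wt{\mu}_k|>((2k+1)K)^{\ta}$ on $A_k^c$ (see \eqref{Aj}), each single-variable sum over $\wt{\mu}_k$ converges; because $\ta$ is calibrated in \eqref{eps2} precisely so that $\ta\,\eps=4p'$, the $|\wt{\mu}_k|^{\eps'}$-piece contributes $\les C_p((2k+1)K)^{-4p'}$, while the $|\wt{\mu}_{k-1}|^{\eps'}$-piece is even smaller and merely lowers the available power on $\wt{\mu}_{k-1}$ by $\eps'$ when the next generation is summed — harmless thanks to the room $p'-2\eps'>1$, and, crucially, affecting $\wt{\mu}_{k-1}$ only, so no compounding across generations occurs. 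Expanding the $O(2^{j})$ resulting terms and multiplying the per-generation bounds gives the displayed counting estimate; here the hypothesis $p<\infty$ (i.e.\ $p'>1$) is essential for the convergence of these one-dimensional sums, and the precise values of $\eps$ and $\ta$ in \eqref{eps}--\eqref{eps2} are exactly what keeps each of them convergent with a constant growing no worse than $C_p^{j}$. Finally, combining the counting estimate with the H\"older step and summing over the $(2j-3)!!$ trees yields \eqref{N^J_0-1}, and \eqref{est-N0} follows by multilinearity; the remaining bookkeeping (the arithmetic of the double factorials absorbing $C_p^{j}$) is routine.
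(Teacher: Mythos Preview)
Your counting estimate and the overall strategy are essentially the paper's, but there is a genuine gap in how you close the $\ell^p_n$ norm.  After applying H\"older pointwise in $n$, you bound the factor
\[
\Big(\sum_{\substack{{\bf n}\in\mathfrak{N}(\TT_{j-1})\\ {\bf n}_r=n}}\prod_{a\in\TT^\infty_{j-1}}|\ft\uu_{n_a}|^p\Big)^{1/p}
\]
\emph{uniformly in $n$} by pulling out the pinned terminal frequency in $\ell^\infty$.  Combined with the (also uniform-in-$n$) counting bound, this only yields an $\ell^\infty_n$ estimate on $\N_0^{(j)}(\uu)(n)$, not the required $\FL^p=\ell^p_n$ bound; taking $\|\cdot\|_{\ell^p_n}$ of a constant is infinite.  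The $\ell^\infty$ trick throws away precisely the summability in $n$ that you need.

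The fix is the one the paper uses: do \emph{not} estimate this factor uniformly in $n$; instead keep its $n$-dependence and absorb the outer $\ell^p_n$ norm into it.  Since each assignment of terminal frequencies determines a unique root frequency $n$,
\[
\bigg\|\Big(\sum_{\substack{{\bf n}\in\mathfrak{N}(\TT_{j-1})\\ {\bf n}_r=n}}\prod_{a}|\ft\uu_{n_a}|^p\Big)^{1/p}\bigg\|_{\ell^p_n}
=\Big(\sum_n\sum_{\substack{{\bf n}\in\mathfrak{N}(\TT_{j-1})\\ {\bf n}_r=n}}\prod_{a}|\ft\uu_{n_a}|^p\Big)^{1/p}
=\|\uu\|_{\FL^p}^{2j-1},
\]
and one then pairs this with the $\sup_n$ of the counting sum.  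With this correction your argument goes through; your generation-by-generation summation of the counting factor is a valid variant of the paper's approach (which instead freezes all the $\wt\mu_k$ simultaneously via the bound \eqref{count} and then sums the $\wt\mu_k$ independently), and the final bookkeeping with the double factorials is the same.
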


\begin{proof}

From \eqref{N2jj} (with $j+1$ replaced by $j$), 
we have 
\begin{align*}
 \N^{(j)}_0 (\uu) (n)= \sum_{\TT_{j-1} \in \mathfrak{T}(j-1)}
 \sum_{\substack{{\bf n} \in \mathfrak{N}(\TT_{j-1})\\{\bf n}_r = n}} 
\ind_{\bigcap_{k = 1}^{j-1} A_k^c}
\frac{e^{ i \wt{\mu}_{j-1}t } }{\prod_{k = 1}^{j-1} \wt{\mu}_k}
\, \prod_{a \in \TT^\infty_{j-1}} \ft \uu_{n_{a}}.
\end{align*}

\noi
Then, by H\"older's inequality with \eqref{cj1}, we have
\begin{align}
\label{N^J_0-3}
\| \N^{(j)}_0(\uu)\|_{\F L^{p}}
& \le \Bigg\| 
 \sum_{\TT_{j-1} \in \mathfrak{T}(j-1)} 
 \bigg( \sum_{\substack{{\bf n} \in \mathfrak{N}(\TT_{j-1})\\{\bf n}_r = n}} 
\frac{\ind_{\bigcap_{k = 1}^{j-1} A_k^c}} {\prod_{k = 1}^{j-1} |\wt{\mu}_k|^{p'}} \bigg)^{\frac1{p'}}
 \bigg( \sum_{\substack{{\bf n} \in \mathfrak{N}(\TT_{j-1})\\{\bf n}_r = n}} 
  \prod_{a \in \TT^\infty_{j-1}} | \ft \uu_{n_{a}}|^p
 \bigg)^{\frac1{p}}\bigg\|_{\l^p_n}\notag \\
& \le 
\sup_{\substack{\TT_{j-1} \in \mathfrak{T}(j-1)\\n \in \Z}}
\bigg( \sum_{\substack{{\bf n} \in \mathfrak{N}(\TT_{j-1})\\{\bf n}_r = n}} 
\frac{\ind_{\bigcap_{k = 1}^{j-1} A_k^c}} {\prod_{k = 1}^{j-1} |\wt{\mu}_k|^{p'}}
 \bigg)^{\frac{1}{p'}}  \notag\\
& \hphantom{XXX}
\times 
\sum_{\TT_{j-1} \in \mathfrak{T}(j-1)}
\Bigg\|
\bigg(\sum_{\substack{{\bf n} \in \mathfrak{N}(\TT_{j-1})\\{\bf n}_r = n}} 
\prod_{a \in \TT^\infty_{j-1}} | \ft \uu_{n_a}|^p
\bigg)^\frac{1}{p} \Bigg\|_{\l^p_n} \notag \\
& \le  (2j-3)!!
\sup_{\substack{\TT_{j-1} \in \mathfrak{T}(j-1)\\n \in \Z}}
\bigg( \sum_{\substack{{\bf n} \in \mathfrak{N}(\TT_{j-1})\\{\bf n}_r = n}} 
\frac{\ind_{\bigcap_{k = 1}^{j-1} A_k^c}} {\prod_{k = 1}^{j-1} |\wt{\mu}_k|^{p'}}
 \bigg)^{\frac{1}{p'}}  
\|\uu\|_{\F L^p}^{2j-1}. 
\end{align}

\noi
In the last step, 
 we used 
\[
\Bigg( \sum_n  
\sum_{\substack{{\bf n} \in \mathfrak{N}(\TT_{j-1})\\{\bf n}_r = n}} 
\prod_{a \in \TT^\infty_{j-1}} | \ft \uu_{n_a}|^p
\Bigg)^\frac{1}{p} 
=  \|\uu\|_{\F L^p}^{2j-1}.
\]

We claim that 
\begin{align}
\label{BJ1}
\sup_{\substack{\TT_{j-1} \in \mathfrak{T}(j-1)\\n \in \Z}}
\bigg( \sum_{\substack{{\bf n} \in \mathfrak{N}(\TT_{j-1})\\{\bf n}_r = n}} 
\frac{\ind_{\bigcap_{k = 1}^{j-1} A_k^c}} {\prod_{k = 1}^{j-1} |\wt{\mu}_k|^{p'}}
 \bigg)^{\frac{1}{p'}}  
\le B_p^{j-1} K^{4(1-j)} ((2j -1)!! )^{-4},
\end{align}

\noi
where $B_p>0$ is a constant depending only on $p$.
Then, 
by setting
\[
C_p : = \sup_{j \ge 2} \bigg(  \frac{B_p^{j-1}}{(2j-1)!!} \bigg) < \infty, 
\]

\noi
we see that 
\eqref{N^J_0-1} follows from 
\eqref{N^J_0-3} and \eqref{BJ1}.

It remains to prove \eqref{BJ1}.
First, note that
given any small $\eps > 0$, there exists $C = C(\eps) > 0$ such that  
\begin{align}
\label{count}
\sup_{\substack{\TT_{j-1} \in \mathfrak{T}(j-1)\\ n \in \Z}}  
\big\{{\bf n} \in \mathfrak{N}(\TT_{j-1})\, : \, {\bf n}_r = n, |\wt \mu_k| = \al_k, k= 1, \dots, j-1 \big\} 
\le C^{j-1} \prod_{k=1}^{j-1} |\al_k|^{\eps} ,
\end{align}

\noi
See Lemma 8.16 in \cite{OW3}
for an analogous statement.
It follows from 
 the divisor estimate~\eqref{divisor}
 that 
for fixed $n^{(k)}$ and $\mu_k$,
there are at most $O(|\mu_k|^{0+})$ 
many choices for $n^{(k)}_1$, $n^{(k)}_2$, and $n^{(k)}_3$.
Noting that $|\mu_k| \leq |\al_k| +  | \al_{k-1}|$, 
we can iterate
this argument  from $k = 1$ to $ j-1$
and obtain \eqref{count}.

From  \eqref{Aj} and \eqref{count} with \eqref{eps2}, we have
\begin{align*}
\text{LHS of \eqref{BJ1}}
& \le C^{j-1} \prod_{k = 1}^{j-1} 
\bigg( \sum_{\substack{|\wt{\mu}_k|>((2k+1)K)^{\ta}\\ k = 1, \dots, j-1} }
 \frac{1}{|\wt{\mu}_k|^{p' -\eps}} \bigg)^\frac{1}{p'}\\
& \le C^{j-1} \prod_{k=1}^{j-1} 
 \bigg( \int_{((2k+1)K)^{\ta} }^\infty   t^{-p'+ \eps} \,dt \bigg)^\frac{1}{p'}\\
& = B_p^{(j-1)} K^{4(1-j)} ((2j -1)!! )^{-4}.
\end{align*}

\noi
Recalling that  $\eps$  in \eqref{eps} depends only on  $p$, 
we see that $B_p$ and hence $C_p$ depend only on $1\leq p  < \infty$.
This completes the proof of Lemma \ref{LEM:N^J_0}.
\end{proof}

As a consequence of Lemma \ref{LEM:N^J_0}
with Lemma \ref{LEM:R1}, 
we obtain the following estimate on~$\RR^{(j)}$.

\begin{lemma}\label{LEM:R^J}
Let $1 \leq p < \infty$.
Then, there exists $C_p > 0$ such that 
\begin{align} 
\| \RR^{(j)}(\uu)\|_{\F L^p} 
& \le C_p
\frac{ (2j-1)K^{4(1-j)}}{((2j-1)!!)^2 }
\|\uu\|_{\F L^p}^{2j+1} 
\label{R^J-1}
\end{align}

\noi
for any $j \in \NB$ and $K \geq 1$.

\end{lemma}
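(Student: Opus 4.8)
The plan is to deduce Lemma~\ref{LEM:R^J} directly from Lemma~\ref{LEM:N^J_0} together with the trivial resonant estimate of Lemma~\ref{LEM:R1}, by observing that $\RR^{(j)}$ has exactly the same structure as $\N^{(j)}_0$ with one of its terminal-node factors replaced by the trilinear resonant term and summed over the choice of that node. Indeed, comparing the definition of $\RR^{(j+1)}$ in \eqref{N2jj} with that of $\N^{(j+1)}_0$ and shifting the index $j\mapsto j-1$, one has
\[
\RR^{(j)}(\uu)(n)
= \sum_{b\in\TT^\infty_{j-1}}\ \sum_{\TT_{j-1}\in\mathfrak{T}(j-1)}\sum_{\substack{\n\in\mathfrak{N}(\TT_{j-1})\\ \n_r=n}}
\ind_{\bigcap_{k=1}^{j-1}A_k^c}\,\frac{e^{i\wt\mu_{j-1}t}}{\prod_{k=1}^{j-1}\wt\mu_k}\,\RR^{(1)}(\uu)(n_b)\!\!\prod_{a\in\TT^\infty_{j-1}\setminus\{b\}}\!\!\ft\uu_{n_a},
\]
so that, for each fixed $b\in\TT^\infty_{j-1}$, the inner double sum is precisely $\N^{(j)}_0$ evaluated with $\RR^{(1)}(\uu)$ placed in the slot corresponding to $b$ and $\uu$ in every other slot.

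The first step is to record the multilinear refinement of Lemma~\ref{LEM:N^J_0}. Its proof uses only the moduli $|\ft\uu_{n_a}|$ of the Fourier coefficients, the elementary $\ell^p$ identity $\sum_{\n\in\mathfrak{N}(\TT_{j-1})}\prod_{a\in\TT^\infty_{j-1}}|\ft\uu_{n_a}|^p=\|\uu\|_{\FL^p}^{(2j-1)p}$ coming from free summation over the terminal frequencies (which by Remark~\ref{REM:terminal} determine $\n$), and the counting bound \eqref{BJ1}. Consequently the same argument yields, for arbitrary $f_1,\dots,f_{2j-1}\in\FL^p(\T)$,
\[
\sup_{t\in[-T,T]}\big\|\N^{(j)}_0(t)(f_1,\dots,f_{2j-1})\big\|_{\FL^p}\le C_p\,\frac{K^{4(1-j)}}{((2j-1)!!)^2}\prod_{i=1}^{2j-1}\|f_i\|_{\FL^p}.
\]

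The second step is to apply this with $f_b=\RR^{(1)}(\uu)$ and $f_i=\uu$ for $i\neq b$, invoke Lemma~\ref{LEM:R1} in the form $\|\RR^{(1)}(\uu)\|_{\FL^p}\le\|\uu\|_{\FL^p}^3$, and sum over the $|\TT^\infty_{j-1}|=2(j-1)+1=2j-1$ choices of $b$; this produces the extra factor $2j-1$ and gives \eqref{R^J-1}. (Alternatively, one may simply redo the Hölder computation \eqref{N^J_0-3} verbatim on $\RR^{(j)}$: the phase sum is again controlled by \eqref{BJ1}, the Fourier-coefficient sum factorizes as $\|\RR^{(1)}(\uu)\|_{\FL^p}^p\,\|\uu\|_{\FL^p}^{(2j-2)p}\le\|\uu\|_{\FL^p}^{(2j+1)p}$ by Lemma~\ref{LEM:R1}, and the cardinalities $|\mathfrak{T}(j-1)|=(2j-3)!!$ and $|\TT^\infty_{j-1}|=2j-1$ multiply to $(2j-1)!!$.)

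Since essentially all of the analytic content—the divisor/counting estimate \eqref{count}–\eqref{BJ1} and the $\ell^p$ orthogonality—is already contained in Lemmas~\ref{LEM:N^J_0} and~\ref{LEM:R1}, I do not expect a genuine obstacle here. The only point requiring care is the bookkeeping: one must check that the proof of Lemma~\ref{LEM:N^J_0} is genuinely multilinear, so that $\RR^{(1)}(\uu)$ may legitimately be inserted into a single slot, and that the sum over $b\in\TT^\infty_{j-1}$ and the tree-cardinality factor combine correctly with the gain $((2j-1)!!)^{-4}$ coming from \eqref{BJ1} so as to leave exactly the stated power $((2j-1)!!)^{-2}$ (in fact a slightly better power is available, but the weaker form suffices and is kept for uniformity with the other estimates).
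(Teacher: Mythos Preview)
Your proposal is correct and follows essentially the same approach as the paper's own proof: observe that $\RR^{(j)}$ is structurally $\N^{(j)}_0$ with one terminal factor replaced by $\RR^{(1)}(\uu)$ and summed over the $2j-1$ terminal nodes, then invoke Lemma~\ref{LEM:N^J_0} (in its multilinear form) together with Lemma~\ref{LEM:R1}. Your write-up is in fact more detailed than the paper's, which dispatches the argument in a few lines; your explicit check that the multilinear refinement of Lemma~\ref{LEM:N^J_0} goes through, and your alternative direct H\"older computation, are both welcome but not strictly needed.
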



\begin{proof}
When $j = 1$, this is precisely Lemma \ref{LEM:R1}.
Let  $j \geq 2$. 
Note that $\RR^{(j)}(\uu)$ is nothing but  $\N^{(j)}_0(\uu)$
by replacing $\ft \uu_{n_b}$ with $\RR^{(1)}(\uu)(n_b)$
for $b \in \TT_j^\infty$
and summing over $b \in \TT_j^\infty$.
Then,~\eqref{R^J-1} follows from 
Lemma \ref{LEM:N^J_0} with Lemma \ref{LEM:R1}
and noting that 
 given $\TT_j \in \mathfrak{T}(j-1)$, we have $\#\{b: b \in \TT^\infty_{j-1}\} = 2j-1$.
This extra factor $2j-1$ does not cause a problem thanks to the fast decaying constant 
in \eqref{R^J-1}.
\end{proof}

Lastly, we estimate $\N^{(j)}_1(\uu)$, 
namely, the restriction of $\N^{(j)}$ onto $A_j$.

\begin{lemma}\label{LEM:N^J_1}
Let $1 \le p < \infty$.
Then, there exists $C_p > 0$ such that 
\begin{align} 
\label{N^J-1}
\| \N^{(j)}_1 (\uu)\|_{\F L^{p}} 
& \le  C_p
 \frac{ K^{\frac{2\ta}{p'}+4(1-j)}}{((2j-1)!!)^2 } 
 \|\uu\|_{\F L^{p}}^{2j+1},
\end{align}

\noi
for any $j \in \NB$ and $K \geq 1$.
\end{lemma}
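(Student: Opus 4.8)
The plan is to follow the proof of Lemma~\ref{LEM:N^J_0} closely, with two modifications. By \eqref{N2jj} (with $j+1$ replaced by $j$) and \eqref{P0}, the operator $\N^{(j)}_1$ is
\[
\N^{(j)}_1(\uu)(n) = \sum_{\TT_j \in \mathfrak{T}(j)}\sum_{\substack{{\bf n}\in\mathfrak{N}(\TT_j)\\{\bf n}_r = n}}\ind_{\bigcap_{k=1}^{j-1}A_k^c}\,\ind_{A_j}\,\frac{e^{i\wt\mu_j t}}{\prod_{k=1}^{j-1}\wt\mu_k}\prod_{a\in\TT^\infty_j}\ft \uu_{n_a},
\]
so, compared with $\N^{(j)}_0$, there are now $2j+1$ terminal nodes rather than $2j-1$ (producing $\|\uu\|_{\FL^p}^{2j+1}$ in the end), and $\TT_j$ carries one extra ($j$th) generation of frequencies that does not appear in the denominator $\prod_{k=1}^{j-1}\wt\mu_k$; the sum over these extra frequencies is precisely what the restriction to $A_j$ is designed to control. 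For $j=1$ this is Lemma~\ref{LEM:N11}, so we may take $j\ge2$.

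First I would apply H\"older's inequality exactly as in \eqref{N^J_0-3}, separating the weight from $\prod_a|\ft \uu_{n_a}|^p$ and summing over the $|\mathfrak{T}(j)| = (2j-1)!!$ ordered trees (see \eqref{cj1}); since $\big(\sum_n\sum_{{\bf n}_r=n}\prod_a|\ft \uu_{n_a}|^p\big)^{1/p} = \|\uu\|_{\FL^p}^{2j+1}$, this reduces matters to the weight estimate
\[
\sup_{\substack{\TT_j\in\mathfrak{T}(j)\\ n\in\Z}}\Bigg(\sum_{\substack{{\bf n}\in\mathfrak{N}(\TT_j)\\{\bf n}_r=n}}\frac{\ind_{\bigcap_{k=1}^{j-1}A_k^c}\,\ind_{A_j}}{\prod_{k=1}^{j-1}|\wt\mu_k|^{p'}}\Bigg)^{1/p'}\lesssim B_p^{\,j}\,K^{\frac{2\ta}{p'}+4(1-j)}\,\big((2j-1)!!\big)^{-4},
\]
after which \eqref{N^J-1} follows by the same factorial/constant bookkeeping as in Lemma~\ref{LEM:N^J_0} (taking $C_p := \sup_{j}B_p^{\,j}/(2j-1)!!$ and using $(2j-1)!!\cdot((2j-1)!!)^{-4}\le((2j-1)!!)^{-2}$).

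For the weight estimate I would sum first over the $j$th generation of frequencies, with generations $1,\dots,j-1$ held fixed (so $n^{(j)}$ is a prescribed terminal frequency of $\TT_{j-1}$): by \eqref{Aj} and \eqref{mu}, the phase $\mu_j = 2(n^{(j)}-n^{(j)}_1)(n^{(j)}-n^{(j)}_3)$ ranges over the $O\big(((2j+1)K)^\ta\big)$ integers satisfying $|\mu_j + \wt\mu_{j-1}| = |\wt\mu_j| \le ((2j+1)K)^\ta$, and for each admissible value the divisor estimate \eqref{divisor} supplies $O(|\mu_j|^{0+})$ choices of $(n^{(j)}_1,n^{(j)}_2,n^{(j)}_3)$ --- exactly the mechanism of Lemma~\ref{LEM:N11}. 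This contributes a factor $\lesssim((2j+1)K)^{\ta+0+}\,|\wt\mu_{j-1}|^{0+}\le((2j+1)K)^{2\ta}\,|\wt\mu_{j-1}|^{0+}$. The remaining sum over generations $1,\dots,j-1$ is then handled as in the proof of \eqref{BJ1}, the extra factor $|\wt\mu_{j-1}|^{0+}$ being absorbed harmlessly (as discussed below): by the counting estimate \eqref{count} applied to $\TT_{j-1}$, over $\bigcap_{k=1}^{j-1}A_k^c$ one has
\[
\sum_{\substack{{\bf n}\in\mathfrak{N}(\TT_{j-1})\\{\bf n}_r = n}}\frac{\ind_{\bigcap_{k=1}^{j-1}A_k^c}}{\prod_{k=1}^{j-1}|\wt\mu_k|^{p'}}\lesssim C^{j-1}\prod_{k=1}^{j-1}\sum_{|\al_k|>((2k+1)K)^\ta}|\al_k|^{\eps-p'}\lesssim\prod_{k=1}^{j-1}\big((2k+1)K\big)^{-4p'},
\]
where the choice \eqref{eps2} of $\ta$ (with \eqref{eps0}) turns the power $-(p'-1-\eps)$ of $((2k+1)K)^\ta$ into $-4p'$. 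Taking $1/p'$-th roots, recalling that $K^{2\ta/p'} = K^{16/(p'-1)}$ by \eqref{eps} and \eqref{eps2}, and absorbing the powers of $(2k+1)$ into $((2j-1)!!)^{-4}$ as in Lemma~\ref{LEM:N^J_0}, one arrives at the claimed weight estimate.

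The step I expect to be the main obstacle is this $j$th-generation count, precisely because $|\mu_j| = |\wt\mu_j - \wt\mu_{j-1}|$ is \emph{not} a priori comparable to $((2j+1)K)^\ta$: on $A_{j-1}^c$ the quantity $|\wt\mu_{j-1}|$ is bounded below but not above, so the divisor bound $O(|\mu_j|^{0+})$ can be as large as $O(|\wt\mu_{j-1}|^{0+})$, which is why the factor $|\wt\mu_{j-1}|^{0+}$ appears above. This is harmless: that arbitrarily small growth is absorbed by the decaying factor $|\wt\mu_{j-1}|^{-p'}$ already present in the weight --- equivalently, one applies \eqref{count} with the exponent at the $(j-1)$th slot enlarged by an arbitrarily small amount --- and there is still room to spare since $p'-1-\eps>0$ by \eqref{eps0}. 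Apart from this point and the bookkeeping inherited verbatim from Lemma~\ref{LEM:N^J_0}, the argument presents no further difficulty.
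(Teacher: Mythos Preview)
Your proposal is correct and follows essentially the same route as the paper: apply H\"older as in \eqref{N^J_0-3} to reduce to a weight estimate, then split the sum over $\mathfrak{N}(\TT_j)$ into the sum over $\mathfrak{N}(\TT_{j-1})$ (handled by \eqref{BJ1}/\eqref{count}) and the $j$th-generation sum (controlled on $A_j$ by the divisor estimate), with the stray $|\wt\mu_{j-1}|^{0+}$ absorbed into the $|\wt\mu_{j-1}|^{-p'}$ weight. The paper records this as \eqref{BJ2}, \eqref{count-tree}, and \eqref{J-sum}, and your identification of the only subtle point --- that $|\mu_j|$ is bounded only by $|\wt\mu_{j-1}| + ((2j+1)K)^\ta$, not by $((2j+1)K)^\ta$ alone --- matches the paper's treatment.
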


%

\begin{proof}
From \eqref{N2jj} (with $j+1$ replaced by $j$), 
we have 
\begin{align*}
\N_1^{(j)} (\uu) (n)
= \sum_{\TT_{j} \in \mathfrak{T}(j)}\sum_{\substack{{\bf n} \in \mathfrak{N}(\TT_{j})\\{\bf n}_r = n}} 
\ind_{\bigcap_{k = 1}^{j-1} A_k^c \cap A_j}
\frac{e^{ i \wt{\mu}_{j}t } }{\prod_{k = 1}^{j-1} \wt{\mu}_k}
\,\prod_{a \in \TT^\infty_{j}}  \ft  \uu_{n_{a}}.
\end{align*}

\noi
Proceeding as in \eqref{N^J_0-3} with H\"older's inequality, 
we have
\begin{align}
\| \N^{(j)}_1(\uu)\|_{\F L^{p}}
& \le 
\sup_{\substack{\TT_{j} \in \mathfrak{T}(j)\\n \in \Z}}
\bigg( \sum_{\substack{{\bf n} \in \mathfrak{N}(\TT_{j})\\{\bf n}_r = n}} 
\frac{\ind_{\bigcap_{k = 1}^{j-1} A_k^c\cap A_j}} {\prod_{k = 1}^{j-1} |\wt{\mu}_k|^{p'}}
 \bigg)^{\frac{1}{p'}}  \notag\\
& \hphantom{XXX}
\times 
\sum_{\TT_{j} \in \mathfrak{T}(j)}
\Bigg\|
\bigg(\sum_{\substack{{\bf n} \in \mathfrak{N}(\TT_{j})\\{\bf n}_r = n}} 
\prod_{a \in \TT^\infty_{j}} | \ft \uu_{n_a}|^p
\bigg)^\frac{1}{p} \Bigg\|_{\l^p_n} \notag \\
& \le  (2j-1)!!
\sup_{\substack{\TT_{j} \in \mathfrak{T}(j)\\n \in \Z}}
\bigg( \sum_{\substack{{\bf n} \in \mathfrak{N}(\TT_{j})\\{\bf n}_r = n}} 
\frac{\ind_{\bigcap_{k = 1}^{j-1} A_k^c\cap A_j}} {\prod_{k = 1}^{j-1} |\wt{\mu}_k|^{p'}}
 \bigg)^{\frac{1}{p'}}  
\|\uu\|_{\F L^p}^{2j-1}. 
\label{N^J_1-3}
\end{align}

We claim that there exists $B_p>0$ such that 
\begin{align}
\label{BJ2}
 \sup_{\substack{\TT_j \in \mathfrak{T}(j)\\ n \in \Z}}
\Bigg(  \sum_{\substack{{\bf n} \in \mathfrak{N}(\TT_{j})\\{\bf n}_r = n}} 
\frac{\ind_{\bigcap_{k = 1}^{j-1} A_k^c\cap A_j}} {\prod_{k = 1}^{j-1} |\wt{\mu}_k|^{p'}}
 \bigg)^{\frac{1}{p'}}  
\le B_p^{j-1} (2j+1)^{1+\frac{2\ta}{p'}} K^{\frac{2\ta}{p'} + 4(1-j)} ((2j-1)!!)^{-4}.
\end{align}

\noi
Then, the desired estimate \eqref{N^J-1} follows
from 
\eqref{N^J_1-3} and \eqref{BJ2}
by setting
\begin{align*}
C_p := \sup_{j \ge 2} \bigg( \frac{B_p^{j-1}  (2j+1)^{1+ \frac{2\ta}{p'}} }{(2j-1)!!} \bigg).
\end{align*}

It remains to prove \eqref{BJ2}. 
As compared to \eqref{BJ1}
in the proof of Lemma \ref{LEM:N^J_0}, 
the main difference 
is that the summation in \eqref{BJ2} is over ${\bf n} \in \mathfrak{N}(\TT_{j})$ rather than ${\bf n} \in \mathfrak{N}(\TT_{j-1})$. 
Note that 
\begin{align}
\label{count-tree}
 \sum_{\substack{{\bf n} \in \mathfrak{N}(\TT_{j})\\{\bf n}_r = n}} 
=   \sum_{\substack{{\bf n} \in \mathfrak{N}(\TT_{j-1})\\{\bf n}_r = n} }
\sum_{b\in \TT^\infty_{j-1}}
\sum_{\substack{n_b = n_1^{(j)} - n_2^{(j)} + n_3^{(j)} }} .
\end{align}

\noi
With $n_b = n^{(j)}$, 
let $\mu_j$ be as in \eqref{mu}.
Then, thanks to the restriction $A_j$ in \eqref{Aj}, 
we see that 
for fixed $\wt \mu_{j-1}$
there are 
 at most $((2j+1)K)^{\ta}$ many choices of $\mu_{j} $.
 Moreover, we have 
$|\mu_{j}| \le |\wt{\mu}_{j-1}| + ((2j+1)K)^{\ta}$.
Then, by the divisor estimate \eqref{divisor}, 
we conclude that 
\begin{align}
\label{J-sum}
\sum_{b\in \TT^\infty_{j-1}}
\sum_{\substack{n_b= n_1^{(j)} - n_2^{(j)} + n_3^{(j)}  }} \ind_{A_j} 
\les (2j+1) ((2j+1)K)^{\ta} \Big( ((2j+1)K)^{\ta} + |\wt{\mu}_{j-1}| \Big)^{0+}.
\end{align}

\noi
Thus \eqref{BJ2} follows from \eqref{BJ1} together with \eqref{count-tree} and \eqref{J-sum}.
\end{proof}

\subsection{On the error term $\N_2^{(J)}$ and the proof of Proposition \ref{PROP:bound}}

We first prove that the remainder term $\N_2^{(J)} (\uu)$ in \eqref{XJ-1}
tends to zero as $J \to \infty$ under some regularity assumption on~$\uu$.

\begin{lemma}
\label{LEM:error}
Let $\N_2^{(J)}  $ be as in \eqref{N2j} with $j = J$ and $T>0$.
Then, given $\uu \in C([-T, T]; \F L^{\frac32}(\T))$, we have
\begin{align}
\label{error1}
\sup_{t\in [-T,T]}\| \N_2^{(J)}  (\uu) \|_{\F L^\infty} \longrightarrow 0,
\end{align}

\noi
as $J \to \infty$.
\end{lemma}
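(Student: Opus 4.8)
The plan is to read off $\N_2^{(J)}(\uu)$ from its explicit expansion \eqref{N2j} (with $j=J$) and to convert the extra regularity $\uu\in C([-T,T];\F L^{\frac32}(\T))$ into a factorially decaying constant in $J$. First I would take absolute values in \eqref{N2j}, use $|e^{i\wt\mu_J t}|=1$, and discard the constraint $A_J^c$, so that
\begin{align*}
\sup_{t\in[-T,T]}\big|\N_2^{(J)}(\uu)(n)\big|
\le \sum_{\TT_J\in\mathfrak{T}(J)}\ \sum_{\substack{{\bf n}\in\mathfrak{N}(\TT_J)\\{\bf n}_r=n}}
\ind_{\bigcap_{k=1}^{J-1}A_k^c}\,\frac{1}{\prod_{k=1}^{J-1}|\wt\mu_k|}\prod_{a\in\TT_J^\infty}|\ft\uu_{n_a}|.
\end{align*}
Writing each $\TT_J$ as an ordered tree $\TT_{J-1}\in\mathfrak{T}(J-1)$ together with a terminal node $b\in\TT_{J-1}^\infty$ that has been turned non-terminal (with children $b_1,b_2,b_3$ and $n_b=n_{b_1}-n_{b_2}+n_{b_3}$), I would sum first over the two free frequencies $n_{b_1},n_{b_3}$ of this last generation. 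By Hausdorff--Young and the embedding $\F L^{\frac32}(\T)\hookrightarrow L^3(\T)$,
\begin{align*}
\sum_{n_{b_1}-n_{b_2}+n_{b_3}=n_b}|\ft\uu_{n_{b_1}}|\,|\ft\uu_{n_{b_2}}|\,|\ft\uu_{n_{b_3}}|\le\|\uu\|_{\F L^{\frac32}}^3
\end{align*}
uniformly in $n_b$ and $t$, which collapses the $J$-th generation and leaves a sum over $\mathfrak{N}(\TT_{J-1})$ in which the node $b$ no longer carries an $\ell^{3/2}$ factor.

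To the remaining sum I would apply H\"older's inequality in ${\bf n}$ with the conjugate exponents $(3,\frac32)$, placing $\prod_{k=1}^{J-1}|\wt\mu_k|^{-1}$ in $\ell^3$ and $\prod_{a\in\TT_{J-1}^\infty\setminus\{b\}}|\ft\uu_{n_a}|$ in $\ell^{3/2}$, and then take $\sup_n$. Since the $2J-2$ frequencies $\{n_a:a\in\TT_{J-1}^\infty\setminus\{b\}\}$ together with the relation ${\bf n}_r=n$ determine $n_b$, the $\ell^{3/2}$ factor is bounded by $\|\uu\|_{\F L^{\frac32}}^{2J-2}$, uniformly in $n$. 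For the modulation factor I would rerun the counting argument of Lemma~\ref{LEM:N^J_0} that produces \eqref{BJ1} (with exponent $3$ in place of $p'$; when $p=\frac32$ this is literally \eqref{BJ1}), using the divisor estimate \eqref{divisor} and the lower bounds $|\wt\mu_k|>((2k+1)K)^{\ta}$ coming from $\bigcap_{k=1}^{J-1}A_k^c$; recalling that $\ta$ is large, this yields
\begin{align*}
\sup_{\substack{\TT_{J-1}\in\mathfrak{T}(J-1)\\n\in\Z}}\bigg(\ \sum_{\substack{{\bf n}\in\mathfrak{N}(\TT_{J-1})\\{\bf n}_r=n}}\frac{\ind_{\bigcap_{k=1}^{J-1}A_k^c}}{\prod_{k=1}^{J-1}|\wt\mu_k|^{3}}\bigg)^{\frac13}\le C^{J-1}K^{-\delta(J-1)}\big((2J-1)!!\big)^{-4}
\end{align*}
for suitable $C=C(p)>0$ and $\delta=\delta(p)>0$.

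Finally, summing over the $|\mathfrak{T}(J-1)|=(2J-3)!!$ ordered trees (see \eqref{cj1}) and the $2J-1$ choices of $b$, and using $(2J-3)!!\,(2J-1)=(2J-1)!!$, I would arrive at
\begin{align*}
\sup_{t\in[-T,T]}\big\|\N_2^{(J)}(\uu)\big\|_{\F L^\infty}
\le C^{J-1}K^{-\delta(J-1)}\big((2J-1)!!\big)^{-3}\,\|\uu\|_{C([-T,T];\F L^{\frac32})}^{2J+1},
\end{align*}
and the right-hand side tends to $0$ as $J\to\infty$ because $\big((2J-1)!!\big)^{-3}$ decays faster than any exponential in $J$ (recall $K\ge1$). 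The step I expect to be the main obstacle is precisely the one handled by the preliminary summation over the last generation: $\N_2^{(J)}$ carries only the $J-1$ modulation weights $\wt\mu_1,\dots,\wt\mu_{J-1}$ but involves $2J+1$ terminal frequencies, so one cannot separate all the Fourier factors from the weights and still have a convergent frequency sum; the newest frequencies, whose modulation $\wt\mu_J$ is undamped, must be absorbed against three of the $\ell^{3/2}$ factors before any H\"older splitting, and it is exactly this use of $\F L^{\frac32}(\T)\hookrightarrow L^3(\T)$ --- equivalently, measuring the output in the weakest space $\F L^\infty(\T)$ --- that closes the estimate.
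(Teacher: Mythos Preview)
Your argument is correct and is essentially the paper's own proof. The paper writes $\N_2^{(J)}=\N^{(J)}-\N_1^{(J)}$, bounds $\N^{(J)}$ by exactly your method (rewriting the last generation as $(\N^{(1)}+\RR^{(1)})(\uu)(n_b)$ acting on a $(J-1)$-tree, using $\|\N^{(1)}(\uu)\|_{\F L^\infty}+\|\RR^{(1)}(\uu)\|_{\F L^\infty}\lesssim\|\uu\|_{\F L^{3/2}}^3$, then H\"older with exponents $(p',p)$ and \eqref{BJ1}, finally specializing to $p=\tfrac32$), and handles $\N_1^{(J)}$ separately via Lemma~\ref{LEM:N^J_1}; your choice to simply discard the constraint $A_J^c$ at the outset accomplishes the same thing in one stroke and avoids the extra splitting.
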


\begin{proof}
By Young's inequality, we have
\begin{align}
\label{non}
\| \N^{(1)} (\uu) \|_{\F L^\infty} + \| \RR^{(1)} (\uu) \|_{\F L^\infty} 
 \les \|  \uu \|_{\FL^{\frac 32}}^3.
\end{align}

From \eqref{P0} (with $j+1$ replace by $J$), we have
\begin{align}
\N_2^{(J)}(\uu) = \N^{(J)}(\uu) -  \N_1^{(J)}(\uu).
\label{P1}
\end{align}

\noi
Then, by rewriting \eqref{N2jj} (with $j+1$ replace by $J$), we have
\begin{align*}
\N^{(J)} (\uu) (n) & 
= \sum_{\TT_{J} \in \mathfrak{T}({J})}\sum_{\substack{{\bf n} \in \mathfrak{N}(\TT_{J})\\{\bf n}_r = n}} 
\ind_{\bigcap_{k = 1}^{J-1} A_k^c}
\frac{e^{ i \wt{\mu}_{J}t } }{\prod_{k = 1}^{J-1} \wt{\mu}_k}
\, \prod_{a \in \TT^\infty_{J}} \ft \uu_{n_{a}} \notag\\
& = \sum_{\TT_{J-1} \in \mathfrak{T}({J-1})}
\sum_{\substack{{\bf n} \in \mathfrak{N}(\TT_{J-1})\\{\bf n}_r = n}} 
\sum_{b \in \TT^\infty_{J-1}}
\ind_{\bigcap_{k = 1}^{J-1} A_k^c}
\frac{e^{ i \wt{\mu}_{J}t } }{\prod_{k = 1}^{J-1} \wt{\mu}_k} \notag \\
& 
\hphantom{XXXXXXXXXX}
\times (\N^{(1)} + \RR^{(1)})(\uu)(n_b) \prod_{a \in \TT^\infty_{J-1} \setminus \{b\}} \ft \uu_{n_{a}} .
\end{align*}

\noi
Proceeding as in the proof of Lemma \ref{LEM:N^J_0} with \eqref{cj1},  \eqref{BJ1}, 
and \eqref{non}, 
we have
\begin{align}
\label{error4}
\| \N^{(J)}  (\uu) \|_{ \F L^\infty} 
& \les |\TT^\infty_{J-1}| 
\sum_{\TT_{J-1} \in \mathfrak{T}(J-1)}
\sup_{\substack{b \in \TT_{J-1}^\infty \\ n\in \Z }}  \Bigg\{ 
\bigg( \sum_{\substack{{\bf n} \in \mathfrak{N}(\TT_{J-1})\\{\bf n}_r = n}} 
\frac{\ind_{\bigcap_{k = 1}^{J-1} A_k^c}} 
{\prod_{k = 1}^{J-1} |\wt{\mu}_k|^{p'}}
 \bigg)^{\frac{1}{p'}} \notag\\
& \hphantom{X}\times \bigg(\sum_{\substack{{\bf n} \in \mathfrak{N}(\TT_{J-1})\\{\bf n}_r = n}} 
|(\N^{(1)} + \RR^{(1)})(\uu)(n_b)|^p  \prod_{a \in \TT^\infty_{J-1} \setminus \{b\}} |\ft \uu_{n_{a}}|^p 
\bigg)^\frac{1}{p} \Bigg\}  \notag \\
& \le B_p^{J-1}   K^{-4(J-1)} \left((2J-1)!!\right)^{-3}  \|  \uu \|_{\FL^{\frac 32}}^3. \notag\\
& \hphantom{X}
 \times \sup_{\substack{b \in \TT_{J-1}^\infty \\ n\in \Z }}  
 \bigg(\sum_{\substack{{\bf n} \in \mathfrak{N}(\TT_{J-1})\\{\bf n}_r = n}} 
  \prod_{a \in \TT^\infty_{J-1} \setminus \{b\}} |\ft \uu_{n_{a}}|^p 
\bigg)^{\frac1p} \notag\\
&\les B_p^{J-1}  K^{-4(J-1)} \left((2J-1)!!\right)^{-2}  \| \uu \|_{\F L^{\frac32}}^3 \|\uu \|_{ \F L^p}^{2J}
\end{align}

\noi
for any $1 \leq p < \infty$.
Therefore, 
 \eqref{error1} follows from \eqref{P1} with Lemma \ref{LEM:N^J_1}
 and \eqref{error4} with  $ p = \frac32$ by taking $J \to \infty$. 
\end{proof}

We briefly discuss the proof of Proposition \ref{PROP:bound}.

\begin{proof}[Proof of Proposition \ref{PROP:bound}]

In view of Lemmas \ref{LEM:N^J_0}, \ref{LEM:R^J}, and \ref{LEM:N^J_1}, 
it suffices to verify that any solution $\uu \in C([-T,T]; \FL^{\frac32}(\T))$ to \eqref{NLS4} satisfies the normal form equation \eqref{NFE2a}.
By integrating 
\eqref{XJ-1} in time, we have
 \begin{align*}
 \begin{split}
\uu(t) - \uu(0) 
& =     \sum_{j = 2}^{J}  \N^{(j)}_0(\uu)(t) 
 - \sum_{j = 2}^{J} \N^{(j)}_0(\uu) (0)  \\
& 
\hphantom{X}
+ \int_0^t \bigg\{
\sum_{j = 1}^{J} \N^{(j)}_1(\uu)(t') + \sum_{j = 1}^{J} \RR^{(j)}(\uu)(t') \bigg\} dt'
 + \int_0^t \N^{(J)}_2 (\uu) (t') dt'.
\end{split}
\end{align*}

\noi
By letting $J \to \infty$, 
we deduce from Lemma \ref{LEM:error} that 
 the normal form equation \eqref{NFE2a} holds in $C([-T, T]; \FL^\infty(\T))$.

Given $J \geq 2$, 
set
\begin{align*}
X_J 
& = \uu(t)  - \uu(0) -\Bigg[ \sum_{j = 2}^{J}  \N^{(j)}_0(\uu)(t) 
 - \sum_{j = 2}^{J} \N^{(j)}_0(\uu_0)  \\
& \hphantom{XXX}
+ \int_0^t \bigg\{
\sum_{j = 1}^{J} \N^{(j)}_1(\uu)(t') + \sum_{j = 1}^{J} \RR^{(j)}(\uu)(t') \bigg\} dt' \Bigg].
\end{align*}

\noi
On the one hand, it follows from 
Lemmas \ref{LEM:N^J_0}, \ref{LEM:R^J}, and \ref{LEM:N^J_1}
that $X_J$ converges to some $X_\infty$
in $C([-T,T]; \FL^{\frac32}(\T))$ as $J \to \infty$.
See \eqref{contra1}.
On the other hand, we know that $X_J$ converges to 0 in 
$C([-T, T]; \FL^\infty(\T))$.
Therefore, by the uniqueness of the limit, 
we conclude that 
$X_J$ tends to 0 in $C([-T,T]; \FL^{\frac32}(\T))$ as $J \to \infty$.
This shows that the normal form equation \eqref{NFE2a}
holds in  $C([-T,T]; \FL^{\frac32}(\T))$.
\end{proof}

\subsection{On the cubic NLS}
\label{SUBSEC:NLS2}

We conclude this section by briefly discussing the case of
the (unrenormalized) cubic NLS \eqref{NLS1}.
The only difference appears from the extra term $\RR_2$ in~\eqref{NLS7}.
When $j = 1$, we simply set 
$ \RR^{(1)}_2 (\uu) (n) =  \RR_2 (\uu) (n)$.
When we  apply a normal form reduction
and substitute $\dt \uu$ by the equation \eqref{NLS7},
there is an extra term due to $\RR_2$. 
By repeating the computation in \eqref{N2jj}, we have
\begin{align*} 
\N^{(j)}_2 (\uu) (n)
& = \dt \N^{(j+1)}_0 (\uu) (n)+ \RR^{(j+1)} (\uu) (n) + \N^{(j+1)} (\uu) (n)\notag \\
& \hphantom{X} +
 \sum_{\TT_{j} \in \mathfrak{T}(j)}
\sum_{\substack{{\bf n} \in \mathfrak{N}(\TT_{j})\\{\bf n}_r = n} }
\sum_{b \in\TT^\infty_{j}} 
\ind_{\bigcap_{k = 1}^{j} A_k^c}
\frac{e^{ i \wt{\mu}_{j}t } }{\prod_{k = 1}^{j} \wt{\mu}_k}
\, \RR_2(\uu)({n_b})
\prod_{a \in \TT^\infty_{j} \setminus \{b\}}  \ft \uu_{n_{a}}\notag\\
& =: \dt \N^{(j+1)}_0 (\uu) (n)+ \RR^{(j+1)} (\uu) (n) + \N^{(j+1)} (\uu) (n)
+  \RR^{(j+1)}_2 (\uu) (n).
\end{align*}

 Proposition \ref{PROP:bound2}
 follows exactly as for Proposition \ref{PROP:bound}
 once we note the following bound
 on $ \RR^{(j)}_2$.

\begin{lemma}\label{LEM:R^J_2}
Let $1 \leq p \leq 2$.
Then, there exists $C_p > 0$ such that 
\begin{align*} 
\| \RR^{(j)}_2(\uu)\|_{\F L^p} 
& \le C_p
\frac{ (2j-1)K^{4(1-j)}}{((2j-1)!!)^2 }
\|\uu\|_{\F L^p}^{2j+1} 
\end{align*}

\noi
for any $j \in \NB$ and $K \geq 1$.

\end{lemma}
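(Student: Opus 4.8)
The plan is to reduce the claim to the bound on $\RR^{(j)}$ already established in Lemma \ref{LEM:R^J}, exploiting the fact that $\RR_2^{(j)}$ is produced by the infinite iteration of normal form reductions in exactly the same way as $\RR^{(j)}$, the only difference being that the trilinear operator $\RR^{(1)}$ is replaced by $\RR_2$. Indeed, inspecting the recursion for $\RR_2^{(j+1)}$ displayed above, one sees that for $j\ge 2$ the operator $\RR_2^{(j)}(\uu)(n)$ is obtained from $\N_0^{(j)}(\uu)(n)$ by replacing one terminal factor $\ft\uu_{n_b}$ with $\RR_2(\uu)(n_b)$ and summing over $b\in\TT_{j-1}^\infty$, while for $j=1$ it is just $\RR_2$ itself. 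Thus the only new ingredient is a suitable $\FL^p$-bound on the trilinear operator $\RR_2$.

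First I would record the elementary identity
\[
\RR_2(\uu)(n) = 2i\bigg(\int_\T |\uu|^2\,dx\bigg)\ft\uu(n) = 2i\,\|\uu\|_{L^2}^2\,\ft\uu(n),
\]
so that $\|\RR_2(\uu)\|_{\FL^p} = 2\,\|\uu\|_{L^2}^2\,\|\uu\|_{\FL^p}$. At this point the hypothesis $1\le p\le 2$ enters: since $\FL^p(\T)\hookrightarrow \FL^2(\T)=L^2(\T)$ for $p\le 2$, we have $\|\uu\|_{L^2}\le \|\uu\|_{\FL^p}$, and hence $\|\RR_2(\uu)\|_{\FL^p} \le 2\,\|\uu\|_{\FL^p}^3$. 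This is precisely the analogue of Lemma \ref{LEM:R1} for $\RR_2$, up to the harmless constant $2$, and it is the only place where the restriction on $p$ is used.

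With this in hand, I would run the proof of Lemma \ref{LEM:R^J} verbatim. As in \eqref{N^J_0-3}, apply H\"older's inequality, extract the supremum over ordered trees of $\big(\sum \ind_{\bigcap_{k=1}^{j-1} A_k^c}/\prod_{k=1}^{j-1}|\wt\mu_k|^{p'}\big)^{1/p'}$ and bound it using \eqref{BJ1}, count trees with \eqref{cj1}, and replace the factor $\ft\uu_{n_b}$ by $\RR_2(\uu)(n_b)$ using the trilinear bound just proved; the sum over $b\in\TT_{j-1}^\infty$ contributes the extra combinatorial factor $2j-1=|\TT_{j-1}^\infty|$, and the total homogeneity rises from $2j-1$ to $2j+1$. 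The constant $2$ coming from the $\RR_2$-bound is absorbed into $C_p$ exactly as in Lemma \ref{LEM:R^J}, yielding the stated estimate.

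I do not anticipate any real obstacle: the entire content is the observation that, when $p\le 2$, $\RR_2$ obeys the same cubic $\FL^p$-bound as $\RR^{(1)}$, after which the arithmetic and combinatorial machinery already developed for $\RR^{(j)}$ applies with no change. The only point requiring care is to flag that this is exactly where the hypothesis $p\le 2$ is genuinely needed — for $p>2$ one cannot control $\|\uu\|_{L^2}$ by $\|\uu\|_{\FL^p}$, which is the reason Proposition \ref{PROP:bound2}, and hence Theorem \ref{THM:1} for the unrenormalized cubic NLS, is restricted to $1\le p\le 2$.
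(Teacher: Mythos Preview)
Your proposal is correct and follows essentially the same approach as the paper: the paper's proof likewise reduces to Lemma \ref{LEM:N^J_0} as in the proof of Lemma \ref{LEM:R^J}, after noting that $\|\RR_2(\uu)\|_{\FL^p}\le \|\uu\|_{\FL^p}^3$ for $1\le p\le 2$. Your write-up is slightly more detailed (in particular you correctly identify the embedding $\FL^p\hookrightarrow L^2$ for $p\le 2$ as the source of the restriction), but the strategy is identical.
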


\begin{proof}
This lemma follows from Lemma \ref{LEM:N^J_0} as in the proof of Lemma \ref{LEM:R^J}
once we note that 
\[\| \RR_2(\uu)\|_{\F L^p}  \le \|\uu\|_{\F L^p}^3\]
\noi
when $1 \leq p \leq 2$.
\end{proof}

\appendix

\section{On the persistence of regularity in $\FL^p(\T)$, $1\leq p < 2$}\label{SEC:A}

We first recall the basic definitions and properties
of the Fourier restriction norm spaces $X^{s, b}_p(\T\times \R) $ adapted
to the Fourier-Lebesgue spaces.
Let $\S(\T\times \R)$ be the vector space 
of $C^{\infty}$-functions $u:\R^{2}\rightarrow \C$ 
such that
\[ u(x, t)=u(x+1, t) \qquad 
\text{and} \qquad\sup_{(x, t)\in \R^{2}}|t^{\al}\dt^{\be}\dx^{\g}u(x, t)|<\infty\]

\noi
for any $\al,\beta,\g\in \NB\cup\{0\}$.

\begin{definition} \rm
Let $s,b\in \R$, $1\leq p \leq \infty$. We define the space $X^{s,b}_{p}(\T \times \R )$ as the completion of 
$\S(\T\times \R)$ with respect to the norm 
\begin{equation}
\|u\|_{X^{s,b}_{p}(\T \times \R)}=\|\jb{n}^{s} \jb{\tau+n^{2}}^{b}
\ft u(n, \tau) \|_{\l_{n}^{p}L^{p}_{\tau}(\Z\times \R)}. 
\label{Xsb1}
\end{equation}
\end{definition}

For brevity, we simply  denote $X^{s,b}_{p}(\T \times \R)$ by $X^{s,b}_{p}$.
Recall the following characterization of the $X^{s, b}_{p}$-norm
in terms of the interaction representation $\uu(t) = S(-t) u(t)$:
\begin{equation*} 
\|u\|_{X^{s,b}_{p}}=\|\uu\|_{\FL^{s,p}_{x}\FL^{b,p}_{t}},
\end{equation*}

\noi
where the iterated norm is to be understood in the following sense:
 \[\|\uu\|_{\FL^{s,p}_{x}\FL^{b,p}_{t}}
 :=\|\jb{n}^{s} \jb{\tau}^{b}\ft \uu(n, \tau)\|_{\l^{p}_{n}L^{p}_{\tau}}
 =\big\| \|\jb{n}^{s}\ft \uu (n, t)\|_{\FL^{b,p}_{t}}\big\|_{\l^{p}_{n}}.\]
 
 \noi
 Here, $\FL^{s,p}_{x}(\T)$ is as in \eqref{FL1}
 and $\FL^{b,p}_{t}(\R)$ is defined by the norm:
 \[
\| f \|_{\FL^{b, p} (\R)} : = \|\jb{\tau}^b \ft f (\tau)\|_{L^p_\tau(\R)}.
\] 

 \noi
 Note that these spaces are separable when $p<\infty$.

For any $1\leq p < \infty$ and  $s\in \R$, we have 
\begin{equation}\label{conti}
X^{s,b}_{p} \embeds C(\R; \FL^{s,p}(\T)), \quad \text{if }  b>\frac{1}{p'} = 1 - \frac 1p.
\end{equation} 

\noi
This is a consequence of the dominated convergence theorem along with the following embedding relation:
 $\FL^{b, p}_t \embeds \FL^1_t \embeds C_{t}$,
where the second embedding 
is  the Riemann-Lebesgue lemma.

Given an interval $I \subset \R$, 
 we also define the local-in-time version $X^{s,b}_{p}(I)$ of 
the  $X^{s, b}_{p}$-space
as the collection of functions $u$ such that 
\begin{equation}
\|u\|_{X^{s,b}_{p}(I)}:=\inf \big\{\|v\|_{X^{s,b}_{p}} 
\, : \,  v|_{I}=u \big\}
\label{Xsb3}
\end{equation}

\noi
is finite.

Lastly, we recall  the following linear estimates.
See \cite{FOW} for the proof.

\begin{lemma}\label{LEM:lin} 
\textup{(i) (Homogeneous linear estimate).}
Given $1\leq p\leq \infty$  and $s,b\in \R$, we have 
\begin{equation*}
\|S(t)f\|_{X^{s,b}_{p}([0, T])} \les \|f\|_{\FL^{s,p}}
\end{equation*}

\noi
for any $0 < T \leq 1$.

\medskip

\noi
\textup{(ii) (Nonhomogeneous linear estimate).}
Let $s\in \R$, $1\leq p <  \infty$,  and $-\frac{1}{p}<b'\leq 0\leq b \leq 1+b'.$ 
Then, we have 
\begin{equation}
\label{duhamel0}
\bigg\| \int_0^tS(t-t')F(t')dt'\bigg\|_{X^{s,b}_{p}([0, T])}\les T^{1+b'-b}\|F\|_{X^{s,b'}_{p}([0, T])}
\end{equation}

\noi
for any $0< T \leq 1$.

\end{lemma}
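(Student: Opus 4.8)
The plan is to reduce both estimates to one-dimensional bounds in the time variable, carried out frequency-by-frequency in $n \in \Z$, via the characterization $\|u\|_{X^{s,b}_p} = \|\uu\|_{\FL^{s,p}_x \FL^{b,p}_t}$ in terms of the interaction representation $\uu(t) = S(-t)u(t)$. On the Fourier side the spatial weight $\jb{n}^s$ and the $\l^p_n$-summation commute with every time operation below, the modulation weight $\jb{\tau + n^2}^b$ collapses to $\jb{\tau}^b$ once one passes to $\uu$, and $S(-t)$ disappears entirely from the Duhamel integral. Thus it suffices to establish the corresponding scalar-in-time bounds with implicit constants uniform in $n$, and then apply $\|\jb{n}^s\,\cdot\,\|_{\l^p_n}$.

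For part (i), I would fix a bump function $\psi \in C_c^\infty(\R)$ with $\psi \equiv 1$ on $[-1,1]$. Since $0 < T \leq 1$, the function $t \mapsto \psi(t)\,S(t)f$ equals $S(t)f$ on $[0,T]$, so by the definition \eqref{Xsb3} of the restriction norm it is enough to bound $\|\psi(\cdot)S(\cdot)f\|_{X^{s,b}_p}$. Its interaction representation is $\psi(t)f$, whose space-time Fourier transform factors as $\hat\psi(\tau)\ft f(n)$, and hence $\|\psi(\cdot)S(\cdot)f\|_{X^{s,b}_p} = \|\jb{n}^s\ft f(n)\|_{\l^p_n}\,\|\jb{\tau}^b\hat\psi(\tau)\|_{L^p_\tau} = \|f\|_{\FL^{s,p}}\,\|\psi\|_{\FL^{b,p}_t}$, with $\|\psi\|_{\FL^{b,p}_t} < \infty$ for every $b \in \R$ because $\psi$ is Schwartz. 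This settles (i).

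For part (ii), I would first extend $F|_{[0,T]}$ to $\wt F$ on $\T \times \R$ with $\|\wt F\|_{X^{s,b'}_p} \leq 2\|F\|_{X^{s,b'}_p([0,T])}$, fix $\eta \in C_c^\infty(\R)$ with $\eta \equiv 1$ on $[-1,1]$, and set $\eta_T(\cdot) = \eta(\cdot/T)$. Since $T \leq 1$, the function $v(t) := \eta_T(t)\int_0^t S(t-t')\,\eta_T(t')\,\wt F(t')\,dt'$ agrees with $\int_0^t S(t-t')F(t')\,dt'$ on $[0,T]$, so it is an admissible extension. Writing $G := S(-\cdot)\wt F$ for the interaction representation of $\wt F$ and $g(t) := \ft G(n,t)$ for a fixed $n$, the interaction representation of $v$ has $n$-th Fourier coefficient $\eta_T(t)\int_0^t \eta_T(t')\,g(t')\,dt'$, so after applying $\|\jb{n}^s\,\cdot\,\|_{\l^p_n}$ the estimate reduces to the scalar bound
\[
\Big\|\eta_T(t)\int_0^t \eta_T(t')\,g(t')\,dt'\Big\|_{\FL^{b,p}_t(\R)} \les T^{1+b'-b}\,\|g\|_{\FL^{b',p}_t(\R)},
\]
uniformly in $n$ (hence in $g$).

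It remains to prove this scalar time estimate, which I expect to be the crux. Using $\int_0^t \eta_T(t')g(t')\,dt' = \int_\R \frac{e^{it\tau}-1}{i\tau}\,\widehat{\eta_T g}(\tau)\,d\tau$, I would split the $\tau$-integral at $|\tau| = T^{-1}$. On $\{|\tau| \leq T^{-1}\}$ one expands $e^{it\tau}-1 = \sum_{k \geq 1}(it\tau)^k/k!$; the $k$-th term contributes $\eta_T(t)\,t^k$ times a constant bounded by $\frac{1}{k!}\int_{|\tau|\leq T^{-1}}|\tau|^{k-1}|\widehat{\eta_T g}(\tau)|\,d\tau$, and since $\|\eta_T(\cdot)(\cdot)^k\|_{\FL^{b,p}_t} \les T^{k}\,T^{1/p'-b}\,\mathrm{poly}(k)$ (using $b \geq 0$, $T \leq 1$) and $|\tau|^{k-1} \leq T^{-(k-1)}$ on the support, each term gains $T^{1+b'-b}$ while the series in $k$ converges. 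On $\{|\tau| \geq T^{-1}\}$ one splits $\frac{e^{it\tau}-1}{i\tau} = \frac{e^{it\tau}}{i\tau} - \frac{1}{i\tau}$: the first piece yields $\eta_T(t)$ times the inverse transform of $\tau^{-1}\ind_{\{|\tau|>T^{-1}\}}\widehat{\eta_T g}$, controlled by Hausdorff--Young, Young's convolution inequality, and the multiplier bound $\jb{\tau}^{b}|\tau|^{-1}\jb{\tau}^{-b'} \les T^{1+b'-b}$ on $\{|\tau|>T^{-1}\}$ (which forces $b \leq 1+b'$); the second piece is $\eta_T(t)$ times the constant $\int_{|\tau|>T^{-1}}\frac{\widehat{\eta_T g}(\tau)}{i\tau}\,d\tau$, whose modulus is $\les \|\jb{\tau}^{b'}\widehat{\eta_T g}\|_{L^p}\,\|\jb{\tau}^{-1-b'}\|_{L^{p'}(|\tau|>T^{-1})} \sim T^{b'+1/p}\|g\|_{\FL^{b',p}_t}$. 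It is precisely here that the hypothesis $b' > -\tfrac1p$ is essential: it is exactly what makes $\jb{\tau}^{-1-b'} \in L^{p'}(|\tau|>T^{-1})$ with the correct power of $T$ (equivalently $1+b' > \tfrac1{p'}$, so that the Duhamel output belongs to $\FL^{1+b',p}_t \hookrightarrow \FL^1_t \hookrightarrow C_t$ as in \eqref{conti}), and multiplying by $\|\eta_T\|_{\FL^{b,p}_t} \les T^{1/p'-b}$ again produces the gain $T^{1+b'-b}$. The main technical burden is the bookkeeping of the $T$-powers through the $\eta_T$-convolutions and the weighted $L^p$-manipulations; no spatial or nonlinear structure is involved, only the unitarity of $S(t)$ on $\FL^{s,p}$ and the $\FL^{b,p}_t$ calculus. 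The complete details are carried out in \cite{FOW}.
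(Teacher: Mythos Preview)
Your sketch is correct and follows the standard Bourgain--Ginibre--Gr\"unrock argument for these linear estimates; note that the paper itself does not supply a proof of this lemma but simply refers to \cite{FOW} (and remarks that the nonhomogeneous estimate is based on (2.21) in \cite{Grock1}), so your outline in fact provides considerably more detail than the paper does here. Since you also close by citing \cite{FOW} for the full details, your treatment is entirely in line with---indeed strictly more informative than---the paper's own handling of this lemma.
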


The nonhomogeneous linear estimate \eqref{duhamel0} is based on 
 (2.21) in \cite{Grock1}.
While $p > 1$ is assumed in \cite{Grock1}, the estimate also holds true when $p = 1$.

The following trilinear estimate is the key ingredient 
for establishing the persistence of regularity
in $\FL^p(\T)$, $1\leq p < 2$.

\begin{lemma}\label{LEM:tri}
Let $1 \leq p \leq 2$. Then,  there exists small $\eps> 0$ (independent of $p$) such that 
\begin{align}
\big\| |u|^2 u \big\|_{X^{0, - \frac 12 + 2 \eps}_p([0, T])} 
\les \| u\|_{X^{0, \frac 12 + \eps}_2([0, T])}^2 
\| u\|_{X^{0, \frac 12 + \eps}_p([0, T])} 
\label{T0}
\end{align}

\noi
for any $0 < T \leq 1$.
\end{lemma}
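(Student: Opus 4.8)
The plan is to transfer \eqref{T0} to the Fourier side, exploit the resonance relation behind the phase function $\Phi(\bar n)$ from \eqref{Phi}, and absorb the two $\F L^2$-based factors by a Cauchy--Schwarz/divisor argument while leaving the output and the $\F L^p$-based factor measured in the $\l^p_n L^p_\tau$-norm. I would work on $\T\times\R$; the local-in-time statement then follows by the usual time-localization and the definition \eqref{Xsb3}. Writing $\sigma_0 = \tau + n^2$, $\sigma_j = \tau_j + n_j^2$, and $g_i(n_i,\tau_i) = \jb{\sigma_i}^{\frac12+\eps}|\ft u_i(n_i,\tau_i)|\ge 0$, the left-hand side of \eqref{T0} is the $\l^p_n L^p_\tau$-norm of
\[
\jb{\sigma_0}^{-\frac12+2\eps}\sum_{n=n_1-n_2+n_3}\int_{\tau=\tau_1-\tau_2+\tau_3}\frac{g_1\,g_2\,g_3}{\jb{\sigma_1}^{\frac12+\eps}\jb{\sigma_2}^{\frac12+\eps}\jb{\sigma_3}^{\frac12+\eps}},
\]
and, by the conjugation symmetry of $|u|^2u = u\cj u u$, I assign $g_1$ to the $\F L^p$-based input and $g_2,g_3$ to the two $\F L^2$-based inputs. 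The algebraic input is
\[
\sigma_0 - \sigma_1 + \sigma_2 - \sigma_3 = n^2 - n_1^2 + n_2^2 - n_3^2 = 2(n-n_1)(n-n_3)
\]
on $n=n_1-n_2+n_3$, $\tau=\tau_1-\tau_2+\tau_3$; see \eqref{Phi}. Hence $\max\big(\jb{\sigma_0},\jb{\sigma_1},\jb{\sigma_2},\jb{\sigma_3}\big)\ges \jb{(n-n_1)(n-n_3)}$.

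Next I would peel off the resonant part $\{n_1=n\}\cup\{n_3=n\}$, on which $\Phi\equiv0$ and the frequency sum degenerates (e.g.\ $n_1=n$ forces $n_2=n_3$): this contribution is controlled by $\l^p\hookrightarrow\l^{3p}$ in the surviving frequency, together with a Hölder in $\tau$ against $\jb{\sigma}^{-\frac12-\eps}$, which lies in $L^{p'}_\tau$ since $(\frac12+\eps)p'>1$ for $1\le p\le 2$ — in the same spirit as the resonant estimate in Lemma~\ref{LEM:R1}. On the complement I would dyadically localize $\jb{\sigma_0},\dots,\jb{\sigma_3}$ and split according to which one is largest. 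In each case the dominant modulation is used either directly, through the output weight $\jb{\sigma_0}^{-\frac12+2\eps}$, or, if the maximum sits on an input, by spending a $\jb{\sigma_j}^{-\frac12-\eps}$ from the corresponding denominator and keeping a leftover power $\jb{\sigma_j}^{-\delta}$ with a fixed $\delta=\delta(p)>0$ arising from the gap between $-\frac12+2\eps$ and $-\frac12$. This dominant-modulation power converts into $\jb{(n-n_1)(n-n_3)}^{-\delta}\le\jb{n-n_1}^{-\delta/2}\jb{n-n_3}^{-\delta/2}$, after which a Cauchy--Schwarz in the convolution variables carried by $g_2$ and $g_3$ produces the divisor counting factor $\#\{(n_1,n_3):(n-n_1)(n-n_3)=\mu\}=O(\jb{\mu}^{0+})$ from \eqref{divisor}; the $\l^p_nL^p_\tau$-norm of the output and the $g_1$-sum are left intact. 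Summing the dyadic pieces consumes the remaining $\eps$-power and gives \eqref{T0}. Equivalently, the gain from $g_2,g_3$ can be packaged as the periodic $L^4$-Strichartz bound $\|vw\|_{L^2_{x,t}(\T\times[0,1])}\les\|v\|_{X^{0,3/8+}_2}\|w\|_{X^{0,3/8+}_2}$ applied to the two $\F L^2$-based inputs; I would use whichever form keeps the exponent arithmetic most transparent.

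The main obstacle — and essentially the only difference from the classical periodic cubic-NLS trilinear estimate (Bourgain; see also \cite{GH}) — is the bookkeeping forced by the three distinct Lebesgue exponents $p$, $2$, and $p'$: one must arrange that the Cauchy--Schwarz/divisor step only ever touches the two $L^2$-based inputs, so that no embedding $\l^p\hookrightarrow\l^2$ (false for $p<2$) or $L^p_\tau\hookrightarrow L^2_\tau$ is needed, and the output stays in $\l^p_nL^p_\tau$. A secondary technical point is that $\tau\in\R$ is continuous, so "counting in $\tau$" becomes a measure estimate handled by Hölder against the $\jb{\tau+n^2}^{b}$-weights; this is precisely where the hypothesis $1\le p\le 2$, i.e.\ $b=\frac12+\eps>\frac1{p'}$, enters, consistently with \eqref{conti} and Lemma~\ref{LEM:lin}. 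All of the $0+$-losses from \eqref{divisor} and from the off-diagonal weight trades are harmless thanks to the fixed small $\eps>0$ appearing in \eqref{T0}.
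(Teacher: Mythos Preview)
Your plan is correct and shares the paper's core ingredients: split off the resonant frequencies, use the identity $\sigma_0-\sigma_1+\sigma_2-\sigma_3=2(n-n_1)(n-n_3)$, and close with the divisor bound \eqref{divisor}. The execution, however, differs. For the non-resonant part the paper does \emph{not} perform a dyadic modulation decomposition with a case split on the largest $\sigma_j$. Instead, one Cauchy--Schwarz in the two $\ell^2$-variables together with Young's inequality in the $\ell^p$-variable reduces the whole estimate to the single Schur-type bound
\[
\sup_{n,\tau}\ \frac{1}{\jb{\sigma_0}^{1-4\eps}}\sum_{\substack{n=n_1-n_2+n_3\\ n\ne n_1,n_3}}\iint\prod_{j=1}^{3}\frac{d\tau_1 d\tau_2}{\jb{\sigma_j}^{1+2\eps}}<\infty,
\]
which is then verified by first integrating out the $\tau$-convolutions (via the elementary bound $\int\jb{\tau-a}^{-1-\eps}\jb{\tau-b}^{-1-\eps}d\tau\les\jb{a-b}^{-1-\eps}$ from \cite{GTV}) and only afterwards invoking \eqref{divisor}. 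This is shorter and avoids precisely the bookkeeping you flag about which $\sigma_j$ carries the $\ell^p$ input. For the resonant piece the paper uses the identity \eqref{non2} to separate the purely diagonal term $\II=-\sum_n|\ft u(n)|^2\ft u(n)$, where your $\ell^p\hookrightarrow\ell^{3p}$/H\"older-in-$\tau$ sketch indeed works, from the mass term $\III=2\|u(t)\|_{L^2}^2 u$. Your description does not cover $\III$: on $\{n_1=n\}$ the ``surviving frequency'' is a single $n$ while the inner sum $\sum_m\overline{\ft u(m)}\ft u(m)$ produces the scalar $\|u(t)\|_{L^2}^2$, so there is nothing for $\ell^p\hookrightarrow\ell^{3p}$ to act on; this term is instead handled via the embedding $X^{0,\frac12+\eps}_2\hookrightarrow L^\infty_tL^2_x$ from \eqref{conti}.
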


\begin{proof}
By a standard argument, it suffices to prove \eqref{T0} without a time restriction:
\begin{align}
\big\| |u|^2 u \big\|_{X^{0, - \frac 12 + 2 \eps}_p} 
\les \| u\|_{X^{0, \frac 12 + \eps}_2}^2 
\| u\|_{X^{0, \frac 12 + \eps}_p}.
\label{T1}
\end{align}

We first estimate the non-resonant contribution from $\I$ in \eqref{non2}.
We follow the argument in \cite{GH}.
Let $\s_0 = \tau + n^2$
and $\s_j = \tau_j + n_j^2$, $j = 1, 2, 3$.
Then, \eqref{T1} follows once we prove
\begin{align*}
\bigg\| \frac{1}{\jb{\s_0}^{\frac{1}{2}-2\eps}}
\sum_{\substack{n = n_1 - n_2 + n_3\\n \ne n_1, n_3}}
\intt_{\tau = \tau_1 - \tau_2 + \tau_3}\prod_{j = 1}^3 \frac{f_j(n_j, \tau_j)}{\jb{\s_j}^{\frac{1}{2}+\eps}}
d\tau_1 d\tau_2\bigg\|_{\l^p_nL^p_\tau}
\les \bigg(\prod_{j = 1}^2  \| f_j \|_{\l^2_nL^2_\tau}\bigg)
 \| f_3 \|_{\l^p_nL^p_\tau}.
\end{align*}

\noi
By Cauchy-Schwarz and Young's inequalities, it suffices to prove
\begin{align}
\bigg\| \frac{1}{\jb{\s_0}^{1-4\eps}}
\sum_{\substack{n = n_1 - n_2 + n_3\\n \ne n_1, n_3}}
\intt_{\tau = \tau_1 - \tau_2 + \tau_3}\prod_{j = 1}^3 \frac{1}{\jb{\s_j}^{1+2\eps}}
d\tau_1 d\tau_2\bigg\|_{\l^\infty_nL^\infty_\tau} < \infty.
\label{T2}
\end{align}

\noi
From \eqref{Phi}, we have
\[ \prod_{j = 0}^4 \frac{1}{\jb{\s_j}^\eps} \les \frac{1}{\jb{(n - n_1)(n - n_3)}^\eps}.\]

\noi
Then, by estimating the convolutions in $\tau_j$ 
(see Lemma 4.2 in \cite{GTV})
and applying  \eqref{Phi}, we have
\begin{align*}
\text{LHS of } \eqref{T2}
& \les 
\bigg\| \frac{1}{\jb{\s_0}^{1-3\eps}}
\\
& \hphantom{X}\times 
\sum_{\substack{n = n_1 - n_2 + n_3\\n \ne n_1, n_3}}
\frac{1}{\jb{n - n_1}^\eps\jb{n - n_3}^\eps}
 \frac{1}{\jb{\tau + n^2 - 2(n - n_1) (n- n_3)}^{1+\eps}}
\bigg\|_{\l^\infty_nL^\infty_\tau} \\
& \les 
\bigg\| \sum_{k \in \Z\setminus \{0\}}
\frac{1}{\jb{k}^\eps}
 \frac{1}{\jb{\tau + n^2 - 2k}^{1+\eps}}
d(k)
\bigg\|_{\l^\infty_nL^\infty_\tau} < \infty,
\end{align*}

\noi
where we used the divisor estimate \eqref{divisor} in the last step.

Next, we estimate the contribution from the resonant parts $\II$ and $\III$ in \eqref{non2}.
By Young's inequality followed by Cauchy-Schwarz inequality, we have
\begin{align*}
\| \II \|_{X^{0, - \frac 12 + 2 \eps}_p} 
& \les \bigg\|
\intt_{\tau = \tau_1 - \tau_2 + \tau_3} \ft u(n, \tau_1)\cj{\ft u(n, \tau_2)} \ft u(n, \tau_3) 
d\tau_1 d \tau_2\bigg\|_{\l^p_n L^p_\tau} \\
& \les \|  \ft u\|_{\l^\infty_n L^1_\tau}^2
\|  \ft u\|_{\l^p_n L^p_\tau}\\
& \les \| u\|_{X^{0, \frac 12 + \eps}_2}^2 
\| u\|_{X^{0, \frac 12 + \eps}_p} .
\end{align*}

\noi
With \eqref{conti}, we have
\begin{align*}
\| \III \|_{X^{0, - \frac 12 + 2 \eps}_p} 
& \les \| u \|_{L^\infty_t L^2_x}^2 
\|  \ft u(n, \tau)\|_{\l^p_n L^p_\tau}\\
& \les \| u\|_{X^{0, \frac 12 + \eps}_2}^2 
\| u\|_{X^{0, \frac 12 + \eps}_p} .
\end{align*}

\noi
This completes the proof of Lemma \ref{LEM:tri}.
\end{proof}

When $p = 2$, Lemmas \ref{LEM:lin} and \ref{LEM:tri} allow us to prove local well-posedness of \eqref{NLS1}
in $L^2(\T)$, where the local existence time is given by 
\begin{align}
T = T(\|u_0\|_{L^2}) \sim (1 + \|u_0\|_{L^2})^{-\ta} > 0
\label{non3}
\end{align}

\noi
for some $\ta > 0$.
For  $1\leq p < 2$, by applying Lemmas \ref{LEM:lin} and \ref{LEM:tri}, 
we can easily prove local well-posedness of \eqref{NLS1}
in $\FL^p(\T)$, where the local existence time $T$ is given as in~\eqref{non3},
namely, it depends only on the $L^2$-norm of initial data $u_0$.
In this case, a contraction argument yields
\begin{align}
\sup_{t \in [0, T]} \| u(t) \|_{\FL^p} \leq C \|u_0\|_{\FL^p}
\label{non4}
\end{align}

\noi
for some absolute constant $C>0$.
Then, by iterating the local argument with
\eqref{non3} and the $L^2$-conservation, 
we conclude from \eqref{non3} and \eqref{non4} that 
\begin{align}
\sup_{t \in [0, \tau]} \| u(t) \|_{\FL^p} \leq C^{ (1 + \|u_0\|_{L^2})^\ta \tau} \|u_0\|_{\FL^p}
\label{non5}
\end{align}

\noi
for any $\tau > 0$.
This proves global well-posedness of \eqref{NLS1} in $\FL^p(\T)$, $ 1 \leq p < 2$,
with the growth bound \eqref{non5}
on the $\FL^p$-norm of solutions.
A similar argument yields
global well-posedness of the renormalized cubic NLS \eqref{WNLS1} in $\FL^p(\T)$, $ 1 \leq p < 2$.

\begin{ackno}\rm
T.\,O.~was supported by the ERC starting grant 
(no.~637995 ``ProbDynDispEq'').
The authors are grateful to the anonymous referee for a helpful comment that has improved the presentation of this paper.
\end{ackno}


\end{document}